\documentclass[11pt,oneside,reqno]{amsart}

 \textwidth 165mm
 \textheight 235mm
\setlength{\topmargin}{-0.5cm}
\setlength{\oddsidemargin}{0cm}
\setlength{\evensidemargin}{0cm}
\hfuzz=3pt
\usepackage{hyperref}
\usepackage{amsxtra}
\usepackage{amsopn}
\usepackage{amsmath,amsthm,amssymb}
\usepackage{color}
\usepackage{amscd}
\usepackage{pifont}
\usepackage{amsfonts}
\usepackage{latexsym}
\usepackage{verbatim}
\usepackage{pb-diagram}
\usepackage{tikz}

\newcommand{\fr}{\mathfrak}

 \newtheorem{lemma} {Lemma} [section]
\newtheorem{theorem}[lemma]{Theorem} 
\newtheorem{remark}[lemma] {Remark} 
\newtheorem{prop} [lemma]{Proposition}  
\newtheorem{definition}[lemma] {Definition} 
\newtheorem{corol}[lemma] {Corollary} 
\newtheorem{example}[lemma] {Example}

\newtheorem{claim}[lemma] {Claim}
\newtheorem{assumption}[lemma]{Assumption}

\begin{document}
\title{On a class of geodesic orbit spaces with abelian isotropy subgroup}
\author{Nikolaos Panagiotis Souris}
\address{University of Patras, Department of Mathematics, University Campus, 26504, Rio Patras, Greece}
\email{nsouris@upatras.gr}

\begin{abstract}
Riemannian geodesic orbit spaces $(G/H,g)$ are natural generalizations of symmetric spaces, defined by the property that their geodesics are orbits of one-parameter subgroups of $G$.  We study the geodesic orbit spaces of the form $(G/S,g)$, where $G$ is a compact, connected, semisimple Lie group and $S$ is abelian.  We give a simple geometric characterization of those spaces, namely that they are naturally reductive. In turn, this yields the classification of the invariant geodesic orbit (and also the naturally reductive) metrics on any space of the form $G/S$.  Our approach involves simplifying the intricate parameter space of geodesic orbit metrics on $G/S$ by reducing their study to certain submanifolds and generalized flag manifolds, and by studying properties of root systems of simple Lie algebras associated to these manifolds.

\medskip
\noindent  {\it Mathematics Subject Classification 2010.} Primary 53C25; Secondary 53C30. 

\medskip
\noindent {\it Keywords}:  geodesic orbit space; geodesic orbit metric; compact homogeneous space; abelian isotropy subgroup; naturally reductive space; naturally reductive metric; one-parameter subgroup; generalized flag manifold; 

\end{abstract}
\maketitle

\section{Introduction}

The classification of Riemannian symmetric spaces by E. Cartan has stimulated the study of several classes of Riemannian manifolds that generalize notable properties of symmetric spaces.  Examples include the classes of \emph{isotropy irreducible spaces} (\cite{Wo1}), \emph{weakly symmetric spaces} (\cite{Sel}, \cite{Wo2}), \emph{$\delta$-homogeneous spaces} (\cite{BeNi1}) and \emph{Clifford-Wolf homogeneous spaces} (\cite{BeNi2}).  All the aforementioned manifolds $(M,g)$ share the property that their geodesics $\gamma$ are orbits of one-parameter groups of isometries, or equivalently, there exists a group $G$ of isometries of $(M,g)$ such that 

\begin{equation*}\label{1}\gamma(t)=\exp(tX)\cdot o,\end{equation*}

\noindent where $\exp$ is the exponential map on $G$, $o\in M$ and $\cdot$ denotes the action of $G$ on $M$.  Manifolds with this property are called \emph{geodesic orbit manifolds} (or \emph{g.o. manifolds}) and are extensively studied for the last thirty years within the Riemannian, pseudo-Riemannian and Finsler geometric context (see the recent studies/surveys \cite{Ar2}, \cite{CaZa}, \cite{CheCheWo}, \cite{GoNi}, \cite{Ni}, \cite{YaDe} and the references therein).  Any g.o. manifold $(M,g)$ is homogeneous, that is $M=G/H$, where $H$ is the (closed) isotropy subgroup of a point in $M$.  The corresponding Riemannian space $(G/H,g)$ is called a \emph{g.o. space} and the $G$-invariant (invariant by the action of $G$) metric $g$ is called a \emph{g.o. metric}.

The most common examples of g.o. metrics are the \emph{naturally reductive metrics} (Definition \ref{NatRed}). These are essentially the metrics induced from bi-invariant (pseudo-)Riemannian metrics of Lie subgroups of $G$ acting transitively on $G/H$ (see for example \cite{DaZi}, \cite{Kos} or Theorem \ref{KosThe}). On the other hand, there exist non-naturally reductive g.o. metrics (\cite{Ka}) but they are much more rare than their naturally reductive counterparts. The prime examples of naturally reductive metrics on compact spaces are the \emph{normal metrics} (Definition \ref{normal}), namely those metrics induced from bi-invariant Riemannian metrics on $G$. 

Riemannian naturally reductive spaces have been previously studied in \cite{DaZi}, \cite{Go1}, \cite{TriVa}, among several other works. Their classification is open, while interesting structural and classification results were recently obtained in the context of metric connections with skew-symmetric torsion (\cite{AgFeFr}, \cite{OlReTa}, \cite{St1}, \cite{St2}).  As is the case with naturally reductive spaces, the full classification of g.o. manifolds remains an open question. On the other hand, several partial classifications have been obtained (\cite{AlNi}, \cite{Go2}, \cite{KovVa}, \cite{Nik0}, \cite{Ta}), incuding the classification of the g.o. flag manifolds (\cite{AlAr}) and the recent classification of the g.o. spaces with two isotropy summands (\cite{CheNi}). 

The purpose of this paper is to study the class of g.o. spaces of the form $(G/S,g)$, where $G$ is a compact, connected, semisimple Lie group and the isotropy subgroup $S$ is abelian. Since the possible embeddings of closed abelian subgroups in semisimple Lie groups are varied (for example, non-maximal tori of a given dimension may not be conjugate), the spaces $G/S$ form quite a large class. An important aspect of this class is that several of its elements arise in partial classifications of g.o. spaces (e.g. \cite{AlAr}) or when one seeks to classify g.o. spaces or naturally reductive spaces of a given dimension (\cite{AgFeFr}, \cite{St2}).  Our result is the following simple characterization.

\begin{theorem}\label{main}
Let $(G/S,g)$ be a Riemannian homogeneous space where $G$ is a compact, connected, semisimple Lie group and $S$ is abelian.  Then $(G/S,g)$ is a geodesic orbit space if and only if the metric $g$ is ($G$-)naturally reductive.  In particular, $(G/S,g)$ is a geodesic orbit space if and only if the metric $g$ is normal, that is, $g$ is induced from a bi-invariant Riemannian metric on $G$.
\end{theorem}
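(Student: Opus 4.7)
Since normal metrics are naturally reductive (Definition \ref{normal}) and naturally reductive metrics are g.o.\ (Theorem \ref{KosThe}), only the direction g.o.\ $\Rightarrow$ normal requires proof. I would fix a maximal torus $T$ of $G$ containing $S$, use the real root space decomposition $\mathfrak{g} = \mathfrak{t} \oplus \bigoplus_{\alpha \in R^+} \mathfrak{m}_\alpha$ relative to $T$, and split $\mathfrak{t} = \mathfrak{s} \oplus \mathfrak{s}^\perp$ with respect to the negative Killing form $B$. This gives the reductive complement
$$\mathfrak{m} = \mathfrak{s}^\perp \oplus \bigoplus_{\alpha \in R_0^+} \mathfrak{m}_\alpha \oplus \bigoplus_{\alpha \in R_1^+} \mathfrak{m}_\alpha,$$
where $R_0^+ = \{\alpha \in R^+ : \alpha|_{\mathfrak{s}} = 0\}$ and $R_1^+ = R^+\setminus R_0^+$. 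The isotropy $\Ad(S)$ acts trivially on $\mathfrak{m}^S := \mathfrak{s}^\perp \oplus \bigoplus_{R_0^+}\mathfrak{m}_\alpha$ and by a nontrivial rotation on each plane $\mathfrak{m}_\alpha$ with $\alpha \in R_1^+$. Consequently, an $\Ad(S)$-invariant inner product $g$ is determined by an arbitrary inner product on $\mathfrak{m}^S$ together with positive scalars $a_\alpha$ on $\mathfrak{m}_\alpha$ ($\alpha\in R_1^+$) and possible cross-terms between planes whose $S$-characters coincide up to sign.

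The first reduction is to the submanifold $K/S \subset G/S$, where $K = C_G(S)$ is the centralizer, with Lie algebra $\mathfrak{k} = \mathfrak{s} \oplus \mathfrak{m}^S$. As a connected component of the fixed-point set of the isometric $S$-action on $G/S$, it is totally geodesic; since $S$ is central in (the identity component of) $K$, the space $K/S$ is a Lie group carrying a left-invariant induced metric. The g.o.\ property transfers: a geodesic of $K/S$ extends to a geodesic $\exp(tX)\cdot o$ of $G/S$, and remaining in $K/S$ forces $X \in \mathfrak{k}$. The classical fact that a left-invariant g.o.\ metric on a Lie group is automatically bi-invariant (immediate from the g.o.\ condition $\langle[X,Y],X\rangle = 0$ with trivial isotropy) then forces $g|_{\mathfrak{m}^S}$ to be the restriction of an $\Ad(K)$-invariant, bi-invariant inner product on $\mathfrak{k}$.

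The second reduction is to the generalized flag manifold $G/K$. The horizontal part $\mathfrak{m}_1 := \bigoplus_{R_1^+}\mathfrak{m}_\alpha$ is identified with the tangent space of $G/K$ at the basepoint, and the g.o.\ condition on $G/S$, applied to horizontal vectors $X$ with g.o.\ vector $Z\in\mathfrak{s}$, projects to a g.o.-type condition on $G/K$ (one enlarges $Z\in\mathfrak{s}$ to a $\mathfrak{k}$-component). Invoking the classification of geodesic orbit flag manifolds \cite{AlAr} — which on compact semisimple $G$ forces normality — the scalars $a_\alpha$ are constant within each simple ideal of $\mathfrak{g}$ and all cross-terms vanish.

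Finally, the two families of constants obtained in the submanifold and flag reductions must be matched: applying the g.o.\ condition to a mixed element $X = X_0 + X_1$ with $X_0 \in \mathfrak{m}^S$ and $X_1 \in \mathfrak{m}_\alpha$ for $\alpha \in R_1^+$ chosen to connect the vertices of $R_0$ and $R_1$ in the Dynkin diagram of the ambient simple factor produces a linear relation forcing the two scalars to coincide. The resulting $g$ is then a positive multiple of $B$ on each simple ideal of $\mathfrak{g}$, i.e.\ normal. I expect the principal obstacle to lie in the second reduction, specifically in ruling out the cross-terms between distinct root planes whose $S$-characters agree up to sign: when $S$ is small the parameter space of $\Ad(S)$-invariant inner products is large, and eliminating those mixings demands a careful use of root-string properties in each simple factor of $\mathfrak{g}$, which is precisely the root system analysis announced in the abstract.
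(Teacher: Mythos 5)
Your skeleton (reduce to the totally geodesic fibre $K/S$, $K=C_G(S)$, and to the flag manifold $G/K$, then match the constants) is the same as the paper's, but there are two genuine gaps in the middle and at the end. The first is the assertion that invoking \cite{AlAr} "forces normality" of the induced g.o.\ metric on $G/K$. This is exactly what Theorem \ref{AlArv} does \emph{not} say: the flag manifolds $SO(2l+1)/U(l)$ and $Sp(l)/(U(1)\times Sp(l-1))$ carry one-parameter families of invariant g.o.\ metrics that are not naturally reductive (not standard), so the induced metric on a simple flag factor could a priori be one of these. Ruling them out requires the extra information that the geodesic graph for $G/S$ takes values in $\fr{s}$, whose projection to each simple ideal lies in the \emph{center} $\fr{t}_j$ of $\fr{k}_j$; one then shows (as in Proposition \ref{SimpFlag}, using Lemma \ref{F1lem} on $[\fr{q}_j^1,\fr{q}_j^2]\neq\{0\}$ and on $\operatorname{ad}_{\fr{t}_j}$ acting without kernel on $\fr{q}_j^2$) that the parameter $\lambda_j$ of the exceptional family must equal $1$. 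Without this step your second reduction fails precisely in the two exceptional families, and with it the elimination of cross-terms is obtained not from the classification alone but from the $\operatorname{Ad}_K$-invariance of the induced metric on $\fr{q}$, which itself needs the $A$-invariance of $\fr{p}=\fr{k}\cap\fr{m}$ (via $\operatorname{ad}_{\fr{s}}$-equivariance) plus the Alekseevsky--Nikonorov submersion result.

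The second gap is the final sentence "the resulting $g$ is then a positive multiple of $B$ on each simple ideal, i.e.\ normal." The bi-invariant metric on the group $K/S$ is completely arbitrary on the central directions $\fr{s}^{\prime}$ (the complement of $\fr{s}$ in $\fr{f}=\fr{t}_1\oplus\cdots\oplus\fr{t}_k$), since $\operatorname{Ad}$ acts trivially there, and no matching argument can force it to be a multiple of $B$; in the paper's necessary form (Theorem \ref{NecForm}) the block $\left.A\right|_{\fr{s}^{\prime}}$ remains essentially free. Consequently a g.o.\ metric on $G/S$ is in general \emph{not} the restriction of $\sum_j\mu_jB|_{\fr{g}_j\times\fr{g}_j}$ to the $B$-orthogonal complement $\fr{m}$; it is normal only with respect to the $Q$-orthogonal complement $\widetilde{\fr{m}}$ of $\fr{s}$, equivalently one must exhibit a (generally nonzero) linear $\operatorname{Ad}_S$-equivariant geodesic graph supported on $\fr{s}^{\prime}$ and then pass through Kostant's criterion (Theorem \ref{KosThe}) to upgrade natural reductivity to normality, including the argument that the transitive ideal is all of $\fr{g}$ and that the resulting form $Q$ is positive definite. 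Your matching step also needs to treat the Cartan directions $\fr{t}_j^{\prime}$ inside $\fr{p}_j$ separately from the root planes reached through the Dynkin diagram (this is Case II of Proposition \ref{SimIsot}); the root-connection idea covers only the root-space part of $\fr{p}_j$.
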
 

\noindent Since the bi-invariant Riemannian metrics are in bijection with $\operatorname{Ad}$-invariant inner products and the latter are explicitly known for compact semisimple Lie algebras (Proposition \ref{DZp}), a consequence of Theorem \ref{main} is the classification of the invariant Riemannian geodesic orbit (and also the naturally reductive) metrics on any space of the form $G/S$.  The classification is given as follows.

\begin{corol}\label{Class}

Let $G/S$ be a homogeneous space where $G$ is a compact, connected, semisimple Lie group and $S$ is abelian. Let $\fr{g},\fr{s}$ be the Lie algebras of $G,S$ respectively, let $B$ denote the negative of the Killing form of $\fr{g}$ and consider the decomposition $\fr{g}=\bigoplus_{j=1}^k\fr{g}_j$ of $\fr{g}$ into simple ideals.  Then there is a one to one correspondence between \emph{(i)} Riemannian $G$-invariant geodesic orbit metrics $g$ on $G/S$ and \emph{(ii)} restrictions to $\widetilde{\fr{m}}$ of inner products of the form $Q=\sum_{j=1}^k\left.\mu_j B\right|_{\fr{g}_j\times \fr{g}_j}$, $\mu_j>0$ (up to homothety), where $\widetilde{\fr{m}}$ is the $Q$ orthogonal complement of $\fr{s}$ in $\fr{g}$.
\end{corol}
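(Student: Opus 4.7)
The plan is to derive the corollary as a direct consequence of Theorem \ref{main} combined with Proposition \ref{DZp}.

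First I would invoke Theorem \ref{main} to identify the class of $G$-invariant Riemannian geodesic orbit metrics on $G/S$ with the class of normal metrics on $G/S$, i.e.\ metrics induced from bi-invariant Riemannian metrics on $G$. This reduces the task to classifying the bi-invariant Riemannian metrics on $G$. I would then use the standard bijection between such metrics and $\operatorname{Ad}$-invariant inner products on $\fr{g}$, followed by Proposition \ref{DZp}, which asserts that for compact semisimple $\fr{g}=\bigoplus_{j=1}^k\fr{g}_j$ every such inner product is of the form $Q=\sum_{j=1}^k\mu_j B|_{\fr{g}_j\times\fr{g}_j}$ with $\mu_j>0$.

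Next I would match each $Q$ with its induced normal metric on $G/S$. The $\operatorname{Ad}$-invariance of $Q$ together with the $\operatorname{Ad}(S)$-invariance of $\fr{s}$ forces the $Q$-orthogonal complement $\widetilde{\fr{m}}$ to be an $\operatorname{Ad}(S)$-invariant reductive complement, and via the canonical identification $\widetilde{\fr{m}}\cong T_{eS}(G/S)$ the restriction $Q|_{\widetilde{\fr{m}}}$ is precisely the value of the induced normal metric at $eS$. The ``up to homothety'' qualifier absorbs the scaling ambiguity $Q\mapsto cQ$, $c>0$, which leaves $\widetilde{\fr{m}}$ unchanged and rescales the induced metric on $G/S$ by $c$; conversely, two inner products of the prescribed form that induce homothetic metrics on $G/S$ must be scalar multiples of one another.

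Since Theorem \ref{main} and Proposition \ref{DZp} are both available, I anticipate no substantial obstacle. The argument is essentially a packaging of well-known objects, and the only subtle point worth being explicit about is that the complement $\widetilde{\fr{m}}$ itself depends on $Q$, so the bijection must be phrased intrinsically---as a correspondence between (homothety classes of) $G$-invariant g.o.\ metrics on $G/S$ and (scaling classes of) inner products $Q$ on $\fr{g}$ of the prescribed form, with the explicit formula $g=Q|_{\widetilde{\fr{m}}}$ understood through the $Q$-dependent identification of $\widetilde{\fr{m}}$ with $T_{eS}(G/S)$.
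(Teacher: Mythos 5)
Your first step (Theorem \ref{main} plus Definition \ref{normal} and Proposition \ref{DZp}) is exactly how the paper begins: it identifies the g.o.\ metrics with the restrictions $\left.Q\right|_{\widetilde{\fr{m}}\times\widetilde{\fr{m}}}$ of inner products $Q=\sum_j\mu_j\left.B\right|_{\fr{g}_j\times\fr{g}_j}$. The gap is in your converse clause, ``two inner products of the prescribed form that induce homothetic metrics on $G/S$ must be scalar multiples of one another.'' This is the actual content of the ``one to one'' assertion, and you state it without any argument. It is not automatic: if $Q$ and $Q'$ are non-homothetic, the complements $\widetilde{\fr{m}}$ and $\widetilde{\fr{m}}'$ are in general \emph{different} subspaces of $\fr{g}$ (the torus $\fr{s}$ may be embedded diagonally across several simple ideals, so the orthogonal complement genuinely moves with the $\mu_j$), and there is no direct way to compare $\left.Q\right|_{\widetilde{\fr{m}}\times\widetilde{\fr{m}}}$ with $\left.Q'\right|_{\widetilde{\fr{m}}'\times\widetilde{\fr{m}}'}$ coefficient by coefficient. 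You flag the $Q$-dependence of $\widetilde{\fr{m}}$ as a phrasing issue, but it is precisely the obstruction that has to be overcome in the proof of injectivity.

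The paper handles this with Claim \ref{claim2} (established inside the proof of Theorem \ref{main}): for each simple ideal $\fr{g}_j$ there is a fixed non-zero subspace $V_j\subset\fr{g}_j$ (namely $\fr{q}_j$, or $\fr{g}_j$ itself when $\fr{k}_j=\fr{g}_j$) that lies in \emph{every} reductive complement $\widetilde{\fr{m}}$ arising from a naturally reductive structure, independently of the metric. On such a common subspace the two induced metrics can be compared directly: after normalizing so that $\mu_1=\mu_1'=1$, one has $\left.Q\right|_{V_1\times V_1}=\left.Q'\right|_{V_1\times V_1}$ while $\left.Q\right|_{V_{j_0}\times V_{j_0}}=\mu_{j_0}\left.B\right|_{V_{j_0}\times V_{j_0}}\neq\mu_{j_0}'\left.B\right|_{V_{j_0}\times V_{j_0}}=\left.Q'\right|_{V_{j_0}\times V_{j_0}}$ for some $j_0$, so the metrics are not homothetic. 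To complete your proposal you would need to supply this step (or an equivalent one, e.g.\ verifying directly that $\fr{q}_j$ is $Q$-orthogonal to $\fr{s}$ for every admissible $Q$, hence contained in every $\widetilde{\fr{m}}$, and reading off $\mu_j$ there); as written, the injectivity half of the correspondence is unproven.
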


\noindent When $G$ is simple, the only $\operatorname{Ad}$-invariant inner products on its Lie algebra are the scalar multiples of the Killing form.  As a result, we have the following.

\begin{corol}Let $G/S$ be a homogeneous space where $G$ is a compact, connected, simple Lie group and $S$ is abelian.  A Riemannian $G$-invariant metric $g$ on $G/S$ is geodesic orbit if and only if it is the standard metric induced from the Killing form.
\end{corol}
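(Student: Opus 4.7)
\emph{Proof plan.} The statement is a direct specialization of Corollary~\ref{Class} to the simple case, so the plan is essentially to track what happens to the parameter space of admissible inner products when $\fr{g}$ has only one simple ideal.

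First I would invoke Corollary~\ref{Class}. Since $G$ is simple, the decomposition $\fr{g} = \bigoplus_{j=1}^{k}\fr{g}_{j}$ of its Lie algebra into simple ideals has a single summand, i.e.\ $k=1$ and $\fr{g}=\fr{g}_{1}$. Consequently, the family of inner products $Q = \sum_{j=1}^{k}\mu_{j}\, B|_{\fr{g}_{j}\times\fr{g}_{j}}$ that Corollary~\ref{Class} identifies as the source of all $G$-invariant g.o.\ metrics on $G/S$ collapses to $Q = \mu_{1} B$ with $\mu_{1}>0$, that is, to the positive scalar multiples of $B$.

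Modding out by homothety then leaves a unique such $Q$, namely $B$ itself. Its restriction to the $B$-orthogonal complement $\widetilde{\fr{m}}$ of $\fr{s}$, transported along the standard identification $T_{o}(G/S) \cong \widetilde{\fr{m}}$, is by definition the normal metric on $G/S$ induced from the bi-invariant Killing metric on $G$; this is the standard metric in the sense of the statement. Conversely, this standard metric is normal, hence naturally reductive, hence geodesic orbit by Theorem~\ref{main}.

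There is no genuine obstacle at this step: all the substantive work — the equivalence between the geodesic orbit property and natural reductivity on $(G/S,g)$ — has already been absorbed into Theorem~\ref{main} and packaged as Corollary~\ref{Class}. The only classical ingredient used here, namely that every $\Ad$-invariant inner product on a compact simple Lie algebra is a positive scalar multiple of $B$, is what forces the family of scalars $\{\mu_{j}\}$ to reduce to a single parameter, and it is invoked through Proposition~\ref{DZp}.
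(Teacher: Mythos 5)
Your proposal is correct and follows the same route as the paper, which derives this corollary directly from Corollary \ref{Class} (equivalently Theorem \ref{main} together with Proposition \ref{DZp}) by noting that for $G$ simple the only $\operatorname{Ad}$-invariant inner products on $\fr{g}$ are the positive scalar multiples of the Killing form, so the admissible $Q$ collapse to $\mu B$ and the induced metric is the standard one up to homothety. Nothing is missing.
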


 \noindent Finally, Theorem \ref{main} adds the spaces $G/S$ to the class of spaces whose invariant g.o. metrics are necessarily naturally reductive.  The latter class includes all compact \emph{Lie groups} (in fact any left-invariant g.o. metric on a compact Lie group is necessarily bi-invariant), most \emph{compact, simply connected spaces of positive Euler characteristic} (\cite{AlNi}) and the \emph{Ledger-Obata} spaces (\cite{NikNi}).
     
\subsection{Overview of the approach}\label{overview}

To establish the natural reductivity of the g.o. metrics on $G/S$, our goal is for each such metric to show the linearity of a certain map $\xi$, called the \emph{geodesic graph}, whose domain is the tangent space $\fr{m}$ at the origin of $G/S$ and its image lies in the Lie algebra of $S$ (Proposition \ref{SZ2}).  In general, the parameter space of the invariant metrics on a homogeneous space $G/H$ depends on the so-called \emph{isotropy representation} of $H$ on the tangent space at the origin, and as a rule, the smaller the isotropy subgroup $H$ is, the more complicated is the parameter space of metrics (for further details see the related discussion in \cite{Wa}, p. 255-256, or the explicit description of metrics in terms of the isotropy representation in \cite{So}). 
 
In particular, the parameter space of metrics on $G/S$ is intricate because the isotropy representation of $S$ contains numerous equivalent submodules (in fact, since $S$ is abelian, any real irreducible submodule is either one or two dimensional).  More specifically, the large number of the irreducible submodules implies that the parameter space of metrics has large dimension, while the equivalence of the submodules, apart from further contributing to the increase of the dimension, also implies that the metrics have no ``obvious" diagonal form.  

We simplify the parameter space of metrics on $G/S$ in Section \ref{Reduction} by firstly assuming that $S$ is a torus, and then by observing, using a result of Alekseevsky and Nikonorov in \cite{AlNi}, that any g.o. metric on $G/S$ induces a g.o. metric on the \emph{flag manifold} $G/C_G(S)$ and a bi-invariant metric on the Lie group $C_G(S)/S$, where $C_G(S)$ denotes the \emph{centralizer} of $S$ in $G$  (in fact, the tangent space of the flag manifold coincides with the sum of the two-dimensional submodules of $S$ while the Lie algebra of $C_G(S)/S$ coincides with the sum of the one-dimensional submodules).  A large extent of our subsequent work then relies on the classification of the g.o. metrics on flag manifolds by Alekseevski and Arvanitoyeorgos in \cite{AlAr} and on the Lie-theoretic description of those manifolds, which we provide in Section \ref{G/K}.    

Apart from the aforementioned reduction, we use simplification techniques for g.o. metrics established in \cite{AlNi} and \cite{So}, while we derive new results in this direction using Lie-theoretic arguments, and in particular properties of root systems of simple Lie algebras and their subsystems associated to flag manifolds (subsection \ref{tech}).  As a result, in Section \ref{mainresults} we obtain a simple, necessary form for the g.o. metrics on $G/S$ (Theorem \ref{NecForm}), which leads to the proof of the first part of Theorem \ref{main} in Section \ref{proof}.  To obtain the second part of Theorem \ref{main}, we apply Kostant's characterization of naturally reductive spaces (Theorem \ref{KosThe}).  In Section \ref{proof} we also prove Corollary \ref{Class}.

\section{Preliminaries}\label{Prel}

\subsection{Invariant metrics and the metric endomorphism in $G/H$}\label{InvMet}

Let $G/H$ be a homogeneous space and let $o=eH$ be the \emph{origin} of $G/H$.  Denote by $\fr{g},\fr{h}$ the Lie algebras of $G,H$ respectively.  Moreover, let $\operatorname{Ad}:G\rightarrow \operatorname{Aut}(\fr{g})$ and $\operatorname{ad}:\fr{g}\rightarrow \operatorname{End}(\fr{g})$ be the \emph{adjoint representations} of $G$ and $\fr{g}$ respectively, where $\operatorname{ad}_XY=[X,Y]$.  We assume that $G/H$ is \emph{reductive}, that is there exists a decomposition

\begin{equation*}\label{dec}\fr{g}=\fr{h}\oplus \fr{m}.\end{equation*}

\noindent such that $\operatorname{Ad}_H\fr{m}\subseteq \fr{m}$.  Then $[\fr{h},\fr{m}]\subseteq \fr{m}$, and $\fr{m}$ can be naturally identified with the tangent space $T_o(G/H)$.  If $G$ is compact, the algebra $\fr{g}$ admits an $\operatorname{Ad}$-invariant inner product $B$ (in turn, any operator $\operatorname{ad}_X$, $X\in \fr{g}$, is skew symmetric with respect to $B$).  In such a case, $G/H$ is reductive and we may choose $\fr{m}$ to be the $B$-orthogonal complement of $\fr{h}$ in $\fr{g}$.

For $x\in G$, let $\tau_x:G/H\rightarrow G/H$ denote the (left) action of $x$ on $G/H$. A Riemannian metric $g$ on $G/H$ is called \emph{G-invariant} if the maps $\tau_x$ are isometries for all $x\in G$. A space $G/H$, equipped with a $G$-invariant Riemannian metric $g$, is called a \emph{Riemannian homogeneous space} and is denoted by $(G/H,g)$.  The $G$-invariant Riemannian metrics $g$ on $G/H$ are in one to one correspondence with $\operatorname{Ad}_H$-invariant inner products $g( \ ,\ )_o$ on $\fr{m}$.  In turn, if we fix an $\operatorname{Ad}_H$-invariant inner product $B$ on $\fr{m}$, the $G$-invariant Riemannian metrics $g$ on $G/H$ are in one to one correspondence with \emph{metric endomorphisms} $A:\fr{m}\rightarrow \fr{m}$, satisfying 

\begin{equation}\label{MetEnd}g( X,Y )_o=B(AX,Y),\ X,Y\in \fr{m}.
\end{equation}
\noindent  When $G$ is compact, we will always assume that the fixed product $B$ is an $\operatorname{Ad}$-invariant inner product on $\fr{g}$.  Moreover, if $G$ is semisimple, $B$ will denote the negative of the Killing form of $\fr{g}$.

Each metric endomorphism $A$ is symmetric with respect to $B$, positive definite, $\operatorname{Ad}_H$-equivariant and $\operatorname{ad}_{\fr{h}}$-equivariant (i.e. $A$ commutes with $\operatorname{Ad}_h$ for all $h\in H$ and with $\operatorname{ad}_{X}$ for all $X\in \fr{h}$).  Conversely, any endomorphism with the aforementioned properties corresponds to a unique $G$-invariant Riemannian metric on $G/H$ via Equation \eqref{MetEnd}.  Since $A$ is diagonalizable, it induces a $B$-orthogonal decomposition $\fr{m}=\bigoplus_{i=1}^n{\fr{m}_{\lambda_i}}$, where $\fr{m}_{\lambda_i}$ are the ($\operatorname{Ad}_H$-invariant) eigenspaces of $A$, corresponding to pairwise distinct eigenvalues $\lambda_i>0$.  The general form of a metric endomorphism is explicitly expressed in terms of the irreducible submodules of the isotropy representation $\chi:H\rightarrow \operatorname{Gl}(\fr{m})$, given by $\chi(h)X=\operatorname{Ad}_hX$ (see for example \cite{So} or \cite{Wa}), but we will not go into further details here.

\subsection{Geodesic orbit metrics, geodesic graphs and naturally reductive metrics}\label{GOMetrics}

\begin{definition}\label{GOMet} A $G$-invariant Riemannian metric $g$ on a homogeneous space $G/H$ is called a geodesic orbit metric (or a g.o. metric) if all geodesics of $(G/H,g)$ are orbits of one-parameter subgroups of $G$.  Equivalently, $g$ is a geodesic orbit metric if for any geodesic $\gamma$ of $(G/H,g)$ there exists a vector $X\in \fr{g}\setminus \{0\}$ such that $\gamma(t)=\exp(tX)\cdot o$.  The space $(G/H,g)$ is then called a geodesic orbit space (or g.o. space).
\end{definition} 

A left-invariant metric $g$ on a compact Lie group $G$ is a g.o. metric if and only if it is bi-invariant (\cite{AlNi}).  This is equivalent to the corresponding inner product $g( \ , \ )_e$ on $\fr{g}$ being $\operatorname{Ad}$-invariant.  If $G$ is connected, the bi-invariance of a metric $g$ is equivalent to every operator $\operatorname{ad}_X, X\in \fr{g}$, being skew-symmetric.  We also have the following.
 
\begin{lemma}\label{Bi}
A left-invariant metric $g$ on a compact, connected Lie group $G$ is bi-invariant if and only if the corresponding metric endomorphism $A$ satisfies $[X,AX]=0$ for all $X\in \fr{g}$.  
\end{lemma}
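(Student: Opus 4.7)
The plan is to reduce bi-invariance of $g$ to the algebraic condition that $A$ commutes with every $\operatorname{Ad}_x$, equivalently (since $G$ is connected) that $[A,\operatorname{ad}_X]=0$ for every $X\in\fr{g}$, and then show this commutator condition is equivalent to the pointwise identity $[X,AX]=0$. The nontrivial direction is the backward one; the forward direction is essentially a restatement.

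For the forward direction, I would note that $g$ bi-invariant is equivalent to $g(\,,\,)_e$ being $\operatorname{Ad}$-invariant. Combined with $\operatorname{Ad}$-invariance of the fixed $B$ and the defining relation $g(X,Y)_e=B(AX,Y)$, a direct computation gives $A\operatorname{Ad}_x=\operatorname{Ad}_x A$ for every $x\in G$. Differentiating at $e$ yields $A\operatorname{ad}_X=\operatorname{ad}_X A$, and evaluating at $X$ gives $[X,AX]=A[X,X]=0$.

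The substantive step is the reverse direction, for which my plan is a polarization argument followed by an adjointness manipulation. Replacing $X$ by $X+Y$ in the identity $[X,AX]=0$, expanding, and cancelling $[X,AX]=[Y,AY]=0$, I obtain
\begin{equation*}
[X,AY]=[AX,Y]\quad\text{for all }X,Y\in\fr{g},
\end{equation*}
that is, $\operatorname{ad}_X\circ A=\operatorname{ad}_{AX}$. To upgrade this to $A\circ\operatorname{ad}_X=\operatorname{ad}_{AX}$, I would pair both sides of the polarized identity against an arbitrary $Z\in\fr{g}$ using $B$, and use the two standing facts that $A$ is $B$-symmetric and that each $\operatorname{ad}_X$ is $B$-skew-symmetric (the latter because $B$ is $\operatorname{Ad}$-invariant). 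This rewrites both sides purely in terms of $Y$ and either $A[X,Z]$ or $[AX,Z]$; nondegeneracy of $B$ then forces $A[X,Z]=[AX,Z]$. Comparing with the polarized identity gives $[A,\operatorname{ad}_X]=0$ for all $X\in\fr{g}$.

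Finally, because $G$ is connected the Lie-algebraic commutation $[A,\operatorname{ad}_X]=0$ integrates to $A\operatorname{Ad}_x=\operatorname{Ad}_x A$ for all $x\in G$, and then $\operatorname{Ad}$-invariance of $B$ shows $g(\operatorname{Ad}_x X,\operatorname{Ad}_x Y)_e=B(\operatorname{Ad}_x AX,\operatorname{Ad}_x Y)=B(AX,Y)=g(X,Y)_e$, so $g(\,,\,)_e$ is $\operatorname{Ad}$-invariant and $g$ is bi-invariant. I expect the main obstacle to be bookkeeping in the adjointness step: one has to be careful that the two facts (symmetry of $A$, skew-symmetry of $\operatorname{ad}_X$) are used with respect to the same inner product $B$, so that the cancellation producing $A[X,Z]=[AX,Z]$ is clean; everything else is formal.
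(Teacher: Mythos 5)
Your proof is correct: the forward direction via $A\operatorname{Ad}_x=\operatorname{Ad}_xA$, the polarization giving $[X,AY]=[AX,Y]$, the adjointness step (using $B$-symmetry of $A$ and $B$-skewness of $\operatorname{ad}$, both with respect to the same $\operatorname{Ad}$-invariant $B$) yielding $A[X,Z]=[AX,Z]$ and hence $[A,\operatorname{ad}_X]=0$, and the integration to $[A,\operatorname{Ad}_x]=0$ using connectedness all check out. The paper's route is shorter and structured differently: it anchors everything on the criterion that $g$ is bi-invariant if and only if $g([X,Y],X)_e=0$ for all $X,Y\in\fr{g}$ (the polarized form of skew-symmetry of $\operatorname{ad}$ with respect to $g(\,,\,)_e$, valid since $G$ is connected), and then converts this in a single line, $g([X,Y],X)_e=B([X,Y],AX)=-B(Y,[X,AX])$, so that nondegeneracy of $B$ gives both directions at once. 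Your argument instead isolates the operator statement that $[X,AX]=0$ for all $X$ is equivalent to $A$ commuting with every $\operatorname{ad}_X$ (equivalently every $\operatorname{Ad}_x$), which costs an extra polarization-and-integration layer but makes explicit the equivalence between the pointwise condition and $\operatorname{Ad}$-equivariance of $A$; both proofs ultimately rest on the same three facts, namely $\operatorname{Ad}$-invariance of $B$, $B$-symmetry of $A$, and nondegeneracy of $B$.
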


\begin{proof} 
Fix an $\operatorname{Ad}$-invariant inner product $B$ on $\fr{g}$.  The metric $g$ is bi-invariant if and only if $g( [X,Y],X)_e=0$ for all $X,Y\in \fr{g}$.  By virtue of Equation \eqref{MetEnd} and the $\operatorname{Ad}$-invariance of $B$, the last equation is equivalent to $0=B([X,Y],AX)= -B(Y,[X,AX])$ for all $X,Y\in \fr{g}$, which yields the desired result.\end{proof}

The following defines the most natural class of g.o. metrics, namely those induced from bi-invariant Riemannian metrics.

\begin{definition}\label{normal}
A $G$-invariant metric $g$ on a homogeneous space $G/H$ is called normal if there exists an $\operatorname{Ad}$-invariant inner product $Q$ on the Lie algebra $\fr{g}$ of $G$ such that 

\begin{equation*}g( \ , \ )_o=\left.Q\right|_{\fr{m}\times \fr{m}},\end{equation*}

\noindent where $\fr{m}$ is the $Q$-orthogonal complement of the Lie algebra $\fr{h}$ of $H$ in $\fr{g}$.
\end{definition}
 
Hence, every homogeneous space $G/H$ with $G$ compact admits at least one normal metric and thus at least one g.o. metric.  When $G$ is simple, the only normal metric $g$ on $G/H$ (up to homothety) is the one induced from a scalar multiple of the Killing form of $\fr{g}$. In that case, $g$ is called \emph{standard}. \\

The well-known \emph{geodesic lemma} of Kowalski and Vanhecke in \cite{KovVa} states that $(G/H,g)$ is a g.o. space if and only if there exists a map $\xi:\fr{m}\rightarrow \fr{h}$, called a \emph{geodesic graph}, such that  

\begin{equation}\label{Kow}g\big( [\xi(X)+X,Y]_{\fr{m}}, X\big)_o=0 \ \ \makebox{for all} \ \ X,Y\in \fr{m}.
\end{equation} 

The geodesic graph $\xi$ was introduced by Szenthe in \cite{Sz}. Its form roughly depends on the embedding of $H$ in $G$, and $\xi$ may be non-differentiable at zero.  We also remark that under the assumption that $(G/H,g)$ is a geodesic orbit space, a geodesic graph $\xi$ may not be unique.  The simplest case arises when $\xi\equiv 0$ leading to the following definition.

\begin{definition} \label{NatRed}
A Riemannian homogeneous space $(G/H,g)$ is called naturally reductive with respect to the reductive decomposition $\fr{g}=\fr{h}\oplus \fr{m}$ if  

\begin{equation*}g([X,Y]_{\fr{m}},Z)_o+g(Y,[X,Z]_{\fr{m}})_o=0 \ \ \makebox{for all} \ \ X,Y,Z\in \fr{m},\end{equation*}

 \noindent or equivalently, if Equation \eqref{Kow} holds with $\xi\equiv 0$. More generally, the space $(G/H,g)$ is called naturally reductive (or $G$-naturally reductive) if it is naturally reductive with respect to some reductive decomposition $\fr{g}=\fr{h}\oplus \widetilde{\fr{m}}$.  The metric $g$ is also called ($G$-)naturally reductive. 
 \end{definition}

The following is a useful criterion for naturally reductive spaces in terms of the graph $\xi$.

\begin{prop}\label{SZ1}\emph{(\cite{KovNik}, \cite{KovVa}), \cite{Sz})} 
A Riemannian homogeneous space $(G/H,g)$, with the reductive decomposition $\fr{g}=\fr{h}\oplus \fr{m}$, is ($G$-)naturally reductive with respect to some reductive decomposition $\fr{g}=\fr{h}\oplus \widetilde{\fr{m}}$ if and only if there exists an $\operatorname{Ad}_H$-equivariant linear map $\xi:\fr{m}\rightarrow \fr{h}$ such that condition \eqref{Kow} is true.  Moreover, in this case $\widetilde{\fr{m}}=\{\xi(X)+X:X\in \fr{m}\}$.
\end{prop}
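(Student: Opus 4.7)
The plan is to set up a bijective correspondence between reductive complements $\widetilde{\fr{m}}$ of $\fr{h}$ in $\fr{g}$ and $\operatorname{Ad}_H$-equivariant linear maps $\xi:\fr{m}\to \fr{h}$, given by $\widetilde{\fr{m}}=\{\xi(X)+X:X\in \fr{m}\}$, and to show that under this dictionary natural reductivity with respect to $\fr{g}=\fr{h}\oplus \widetilde{\fr{m}}$ translates precisely into the geodesic lemma condition \eqref{Kow} for $\xi$. Throughout I use the fact that the canonical identification $\widetilde{\fr{m}}\cong T_o(G/H)\cong \fr{m}$ sends $\xi(X)+X$ to the same tangent vector as $X$, so that $g(\xi(X)+X,\xi(Y)+Y)_o=g(X,Y)_o$, and the fact that $\operatorname{ad}_{\fr{h}}$ is skew-symmetric with respect to $g(\cdot,\cdot)_o$ on $\fr{m}$ (since $\exp(t\fr{h})\subseteq H$ acts by isometries at the origin).

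For the forward direction, assume $(G/H,g)$ is naturally reductive with respect to $\fr{g}=\fr{h}\oplus \widetilde{\fr{m}}$. The projection $\pi_{\fr{m}}:\fr{g}\to \fr{m}$ restricts to a linear isomorphism $\widetilde{\fr{m}}\to \fr{m}$, and I define $\xi(X)$ to be the unique element of $\fr{h}$ with $\xi(X)+X\in \widetilde{\fr{m}}$. Linearity is automatic, and $\operatorname{Ad}_H$-equivariance follows from the $\operatorname{Ad}_H$-invariance of both $\fr{m}$ and $\widetilde{\fr{m}}$. Setting $X'=\xi(X)+X$, $Y'=\xi(Y)+Y$, natural reductivity gives $g([X',Y']_{\widetilde{\fr{m}}},X')_o=0$, and the decomposition $[X',Y']_{\widetilde{\fr{m}}}=\xi([X',Y']_{\fr{m}})+[X',Y']_{\fr{m}}$ together with the identification above reduces this to $g([X',Y']_{\fr{m}},X)_o=0$. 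Finally, since $[X,\xi(Y)]\in \fr{m}$ and $\operatorname{ad}_{\xi(Y)}$ is skew on $\fr{m}$, one has $g([X,\xi(Y)],X)_o=0$, so the equation becomes \eqref{Kow}.

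For the converse, given a linear $\operatorname{Ad}_H$-equivariant $\xi$ satisfying \eqref{Kow}, set $\widetilde{\fr{m}}:=\{\xi(X)+X:X\in \fr{m}\}$. Linearity of $\xi$ makes $\widetilde{\fr{m}}$ a subspace, and the relation $\xi(X)+X\in \fr{h}\Rightarrow X=0$ shows $\fr{g}=\fr{h}\oplus \widetilde{\fr{m}}$; equivariance ensures $\operatorname{Ad}_H\widetilde{\fr{m}}\subseteq \widetilde{\fr{m}}$. To check natural reductivity, I reverse the chain of implications from the forward direction: \eqref{Kow} combined with skew-symmetry of $\operatorname{ad}_{\xi(Y)}$ yields $g([X',Y']_{\fr{m}},X)_o=0$ for all $X',Y'\in \widetilde{\fr{m}}$, which via the identification becomes the single-variable natural reductivity condition $g([X',Y']_{\widetilde{\fr{m}}},X')_o=0$. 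The full three-variable form then follows by a standard polarization, replacing $X'$ by $X'+Z'$ and using bilinearity and antisymmetry of the bracket. The final assertion $\widetilde{\fr{m}}=\{\xi(X)+X:X\in \fr{m}\}$ is built into the construction.

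The main technical obstacle is keeping track of two reductive decompositions at once: the bracket projection $[\cdot,\cdot]_{\widetilde{\fr{m}}}$ is what appears in the definition of natural reductivity, whereas \eqref{Kow} uses $[\cdot,\cdot]_{\fr{m}}$. The bridge between them is exactly the map $\xi$, together with the fact that the metric on $T_o(G/H)$ does not "see" the $\fr{h}$-part of an element of $\widetilde{\fr{m}}$. Once this identification is executed carefully, the remaining content is algebraic bookkeeping plus the skew-symmetry of $\operatorname{ad}_{\fr{h}}$ on $\fr{m}$.
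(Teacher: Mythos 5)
Your argument is correct. Note that the paper does not prove Proposition \ref{SZ1} at all --- it is quoted from \cite{KovNik}, \cite{KovVa}, \cite{Sz} --- so there is no internal proof to compare against; what you give is essentially the standard argument from those sources. The key points are all in place: the bijection between reductive complements $\widetilde{\fr{m}}$ and $\operatorname{Ad}_H$-equivariant linear maps $\xi$ via the graph $\{\xi(X)+X\}$, the identity $W_{\widetilde{\fr{m}}}=\xi(W_{\fr{m}})+W_{\fr{m}}$ relating the two bracket projections, the fact that the identification $\widetilde{\fr{m}}\cong T_o(G/H)\cong\fr{m}$ kills the $\fr{h}$-component so that $g$ evaluated on $\widetilde{\fr{m}}$ reduces to $g$ on $\fr{m}$, the skew-symmetry of $\operatorname{ad}_{\xi(Y)}$ on $\fr{m}$ (valid without compactness, since $\exp(t\,\fr{h})\subseteq H$ acts isometrically on $T_o(G/H)$, which disposes of the cross term $g([X,\xi(Y)],X)_o$), and the polarization passing between the one-variable and three-variable forms of natural reductivity. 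The converse direction correctly checks that the graph is an $\operatorname{Ad}_H$-invariant complement before reversing the computation, and the ``moreover'' clause is indeed built into the construction.
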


Any normal metric on a space $G/H$ with $G$ compact is naturally reductive but the converse does not hold.  Moreover, any naturally reductive metric is geodesic orbit, but the converse also does not hold.  The prime example of a non-naturally reductive, geodesic orbit space is the generalized Heisenberg group (\cite{Ka}).\\

The action of $G$ on $G/H$ is called \emph{almost effective} if any subgroup of $H$ that is normal in $G$ is discrete.  The following characterization of Kostant states that, under an almost effectiveness condition, the naturally reductive metrics on $G/H$ are precisely those metrics induced from bi-invariant, possibly non-positive definite metrics of subgroups of $G$ acting transitively on $G/H$.

\begin{theorem}\label{KosThe}\emph{(\cite{DaZi}, \cite{Kos})}    
Let $G/H$ be a connected homogeneous space such that $G$ acts almost effectively on $G/H$.  Assume that $g$ is a $G$-invariant Riemannian metric on $G/H$ which is naturally reductive with respect to the decomposition $\fr{g}=\fr{h}\oplus \fr{m}$.  Then the space $\widetilde{\fr{g}}:=\fr{m}+[\fr{m},\fr{m}]$ is an ideal of $\fr{g}$ such that the corresponding connected subgroup $\widetilde{G}$ of $G$ acts transitively on  $G/H$, and there exists a unique $\operatorname{Ad}_{\widetilde{G}}$-invariant, symmetric, non-degenerate bilinear form $Q$ on $\widetilde{\fr{g}}$ (not necessarily positive definite) such that 

\begin{equation*}Q(\fr{h}\cap\widetilde{\fr{g}},\fr{m})=\{0\} \ \ \makebox{and}\ \ \left.Q\right|_{\fr{m}\times \fr{m}}=g( \ , \ )_o.\end{equation*}

\noindent Here $\fr{h}\cap\widetilde{\fr{g}}$ is the isotropy algebra in $\widetilde{\fr{g}}$.  Conversely, if $G$ is connected, then for any $\operatorname{Ad}_{G}$-invariant, symmetric, non-degenerate bilinear form $Q$ on $\fr{g}$, which is also non-degenerate on $\fr{h}$ and positive definite on $\fr{m}$, the metric on $G/H$ defined by $g( \ , \ )_o=\left.Q\right|_{\fr{m}\times \fr{m}}$ is naturally reductive.  
\end{theorem}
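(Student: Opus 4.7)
The plan is to treat the two implications separately, making repeated use of Proposition \ref{SZ1}. For the \emph{forward direction}, assume $(G/H,g)$ is naturally reductive with respect to $\fr{g}=\fr{h}\oplus\fr{m}$, so that $g([X,Y]_{\fr{m}},Z)_o+g(Y,[X,Z]_{\fr{m}})_o=0$ for all $X,Y,Z\in\fr{m}$. The first step is to verify that $\widetilde{\fr{g}}:=\fr{m}+[\fr{m},\fr{m}]$ is an ideal of $\fr{g}$. Reductivity gives $[\fr{h},\fr{m}]\subseteq\fr{m}\subseteq\widetilde{\fr{g}}$, and a Jacobi-identity expansion of $[\fr{h},[\fr{m},\fr{m}]]$ shows $[\fr{h},\widetilde{\fr{g}}]\subseteq\widetilde{\fr{g}}$; closure under bracket is similar. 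Setting $\fr{h}_0:=\fr{h}\cap\widetilde{\fr{g}}=[\fr{m},\fr{m}]_{\fr{h}}$, we have $\widetilde{\fr{g}}=\fr{h}_0\oplus\fr{m}$. Transitivity of the connected subgroup $\widetilde{G}\subseteq G$ on $G/H$ follows from $\fr{m}\subseteq\widetilde{\fr{g}}$ projecting onto $T_o(G/H)$: the orbit map $\widetilde{G}\to G/H$ has surjective differential at $e$, so $\widetilde{G}\cdot o$ is open in $G/H$, and connectedness forces equality.

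The delicate part is the construction of $Q$. On $\widetilde{\fr{g}}=\fr{h}_0\oplus\fr{m}$, set $Q|_{\fr{m}\times\fr{m}}:=g(\cdot,\cdot)_o$ and declare $Q(\fr{h}_0,\fr{m}):=0$. Since every $Z\in\fr{h}_0$ is a sum of terms $[X_i,Y_i]_{\fr{h}}$ with $X_i,Y_i\in\fr{m}$, the requirement of $\operatorname{ad}_{\widetilde{\fr{g}}}$-invariance combined with $Q(\fr{h}_0,\fr{m})=0$ forces $Q([X,Y]_{\fr{h}},W)=-Q(Y,[X,W]_{\fr{m}})=-g(Y,[X,W]_{\fr{m}})_o$ for $W\in\fr{h}_0$. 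The natural reductivity identity together with the Jacobi identity ensures that this rule is independent of the chosen bracket decomposition of $Z$; hence $Q$ is well defined, symmetric, and $\operatorname{Ad}_{\widetilde{G}}$-invariant by construction. For non-degeneracy on $\fr{h}_0$: the radical $R$ of $Q$ is an ideal of $\widetilde{\fr{g}}$ contained in $\fr{h}_0\subseteq\fr{h}$. The corresponding connected subgroup $N$ is normal in $\widetilde{G}$; since $\widetilde{G}$ is transitive on $G/H$ and $N$ fixes $o$, $N$ acts trivially on $G/H$ and thus lies in the $G$-kernel $\bigcap_{g\in G}gHg^{-1}$, which is discrete by almost effectiveness. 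Hence $R=\{0\}$. Uniqueness of $Q$ follows from $\operatorname{Ad}_{\widetilde{G}}$-invariance and the three defining conditions.

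For the \emph{converse}, given $Q$ on $\fr{g}$ as in the hypothesis, let $\widetilde{\fr{m}}$ be the $Q$-orthogonal complement of $\fr{h}$. Non-degeneracy of $Q$ on $\fr{h}$ yields $\fr{g}=\fr{h}\oplus\widetilde{\fr{m}}$, and $\operatorname{Ad}_G$-invariance of $Q$ gives $\operatorname{Ad}_H\widetilde{\fr{m}}\subseteq\widetilde{\fr{m}}$, so this is a reductive decomposition. To verify naturally reductive with respect to it, start from $Q([X,Y],Z)+Q(Y,[X,Z])=0$ for $X,Y,Z\in\widetilde{\fr{m}}$, which holds because $\operatorname{ad}$ is $Q$-skew by $\operatorname{Ad}_G$-invariance and connectedness of $G$. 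Since $\widetilde{\fr{m}}=\fr{h}^{\perp_Q}$ and $Z\in\widetilde{\fr{m}}$, we have $Q([X,Y]_{\fr{h}},Z)=0$, and similarly $Q(Y,[X,Z]_{\fr{h}})=0$, so the identity reduces to $Q([X,Y]_{\widetilde{\fr{m}}},Z)+Q(Y,[X,Z]_{\widetilde{\fr{m}}})=0$ as required.

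The main obstacle is the well-definedness and non-degeneracy of $Q$ in the forward direction. Well-definedness requires careful bookkeeping to juggle the natural reductivity identity and the Jacobi identity in order to handle the ambiguity in representing elements of $\fr{h}_0$ as bracket sums, while non-degeneracy is precisely the step where the almost effectiveness hypothesis enters the proof essentially; dropping this hypothesis allows a nontrivial ideal of $\widetilde{\fr{g}}$ inside $\fr{h}$, on which the forced definition of $Q$ may degenerate.
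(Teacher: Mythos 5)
The paper offers no proof of Theorem \ref{KosThe}: it is quoted from Kostant and D'Atri--Ziller, so there is no internal argument to compare against, and your proposal is in effect a reconstruction of the classical proof. Its architecture is the right one and is essentially sound: the ideal property of $\widetilde{\fr{g}}=\fr{m}+[\fr{m},\fr{m}]$ via Jacobi, the decomposition $\widetilde{\fr{g}}=\fr{h}_0\oplus\fr{m}$ with $\fr{h}_0=\fr{h}\cap\widetilde{\fr{g}}$ spanned by the $\fr{h}$-projections of brackets, the forced formula $Q([X,Y]_{\fr{h}},W)=-g(Y,[X,W])_o$, non-degeneracy via the radical being an ideal inside $\fr{h}_0$ (contained there because $g_o$ is positive definite on $\fr{m}$) whose connected group is normal in $\widetilde{G}$, fixes every point, and is therefore discrete by almost effectiveness, and the converse via $Q$-skewness of $\operatorname{ad}$ together with $Q(\fr{h},\widetilde{\fr{m}})=\{0\}$ killing the $\fr{h}$-components.

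The one place where the proposal stops short of a proof is exactly the step you flag as the main obstacle: well-definedness and symmetry of $Q$ on $\fr{h}_0\times\fr{h}_0$ are asserted ("careful bookkeeping") but not carried out, and this is the real content of the hard direction. Concretely, what is needed is the pair symmetry $g([[U,V]_{\fr{h}},X],Y)_o=g([[X,Y]_{\fr{h}},U],V)_o$ for $U,V,X,Y\in\fr{m}$: granted this, if $\sum_i[X_i,Y_i]_{\fr{h}}=0$ then for every $W=\sum_j[U_j,V_j]_{\fr{h}}\in\fr{h}_0$ one swaps pairs to get $\sum_i g([W,X_i],Y_i)_o=\sum_j g\big([\sum_i[X_i,Y_i]_{\fr{h}},U_j],V_j\big)_o=0$, which is independence of the representation, and the same identity gives symmetry. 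To prove it: natural reductivity makes $T(X,Y,Z):=g([X,Y]_{\fr{m}},Z)_o$ a totally skew $3$-form, so $g([[U,V]_{\fr{m}},X]_{\fr{m}},Y)_o=g([U,V]_{\fr{m}},[X,Y]_{\fr{m}})_o$ is already pair-symmetric; the Jacobi identity projected to $\fr{m}$ shows that $C(U,V,X,Y):=g([[U,V]_{\fr{h}},X],Y)_o+g([[U,V]_{\fr{m}},X]_{\fr{m}},Y)_o$ is skew in each pair and satisfies the first-Bianchi cyclic identity, so the standard algebraic curvature lemma gives pair symmetry of $C$, and subtracting the $\fr{m}$-part yields the claim. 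Two smaller points to tighten: "$\operatorname{Ad}_{\widetilde{G}}$-invariant by construction" should be a short case check of $Q([V,Z],W)+Q(Z,[V,W])=0$ for $V,Z,W$ ranging over $\fr{m}$ and $\fr{h}_0$ (the only non-trivial cases being the defining one and $V,Z,W\in\fr{h}_0$), followed by connectedness of $\widetilde{G}$; and for transitivity, openness of the single orbit of $o$ does not by itself force equality in a connected space --- use that $\widetilde{\fr{g}}$ is an ideal, hence $\operatorname{Ad}$-invariant, so every $\widetilde{G}$-orbit is open and each orbit is therefore also closed. With these insertions your argument is complete and agrees with the cited classical proof.
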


The following characterizes the $\operatorname{Ad}$-invariant, symmetric, non-degenerate bilinear forms on compact semisimple Lie algebras and thus the bi-invariant metrics on compact semisimple Lie groups (see for example \cite{DaZi}).

\begin{prop}\label{DZp}
Let $\fr{g}$ be a compact semisimple Lie algebra and let $B$ denote the negative of the Killing form of $\fr{g}$.  Consider the $B$-orthogonal decomposition $\fr{g}=\fr{g}_1\oplus \cdots \oplus \fr{g}_k$, of $\fr{g}$ into the simple ideals $\fr{g}_j$.  A bilinear form  $Q$ on $\fr{g}$ is $\operatorname{Ad}$-invariant, symmetric and non-degenerate if and only if  

\begin{equation}\label{DZppp}
Q=\left.\mu_1B\right|_{\fr{g}_1\times \fr{g}_1}+\cdots +\left.\mu_kB\right|_{\fr{g}_k\times \fr{g}_k} \ \ \makebox{where} \ \ \mu_j\in \mathbb R^*.
\end{equation}

Moreover, $Q$ is an inner product if and only if $\mu_j>0$, $j=1,\dots,k$.  Therefore, a left-invariant Riemannian metric $g$ on a compact semisimple Lie group $G$ is bi-invariant if and only if the corresponding product $g(\ , \ )_e$ on $\fr{g}$ has the form \eqref{DZppp} with $\mu_j>0$, $j=1,\dots ,k$.
\end{prop}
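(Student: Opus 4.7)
The plan is to prove both implications of the equivalence and then read off the positivity and bi-invariance statements. Throughout, recall that $\operatorname{Ad}$-invariance of a symmetric bilinear form is equivalent to the infinitesimal identity $Q([Z,X],Y)+Q(X,[Z,Y])=0$ for all $X,Y,Z\in \fr{g}$. The ``if'' direction is essentially bookkeeping: each summand $\mu_j \left.B\right|_{\fr{g}_j\times \fr{g}_j}$ is $\operatorname{Ad}$-invariant, symmetric, and non-degenerate because $\fr{g}_j$ is an ideal, $B$ itself enjoys these properties, and the ideals $\fr{g}_j$ are pairwise $B$-orthogonal; non-degeneracy (resp.\ positive definiteness) of the sum then reduces to $\mu_j\neq 0$ (resp.\ $\mu_j>0$) for every $j$.

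For the converse, the first step is to show that distinct simple ideals are automatically $Q$-orthogonal. Since $\fr{g}_i$ and $\fr{g}_j$ are both ideals, for $i\neq j$ we have $[\fr{g}_i,\fr{g}_j]\subseteq \fr{g}_i\cap \fr{g}_j=\{0\}$. Applying the infinitesimal invariance of $Q$ with $Z,X\in \fr{g}_i$ and $Y\in \fr{g}_j$ gives $Q([Z,X],Y)=-Q(X,[Z,Y])=0$; but simple ideals are perfect, i.e.\ $[\fr{g}_i,\fr{g}_i]=\fr{g}_i$, so this forces $Q(\fr{g}_i,\fr{g}_j)=0$.

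The second step is to show that $\left.Q\right|_{\fr{g}_j\times \fr{g}_j}=\mu_j \left.B\right|_{\fr{g}_j\times \fr{g}_j}$ for some $\mu_j\in \R$. Because $B$ is positive definite on the compact semisimple algebra $\fr{g}_j$, there is a unique $B$-symmetric endomorphism $T_j\colon \fr{g}_j\rightarrow \fr{g}_j$ with $Q(X,Y)=B(T_jX,Y)$. The $\operatorname{Ad}$-invariance of both $Q$ and $B$ translates into $T_j\circ \operatorname{ad}_Z=\operatorname{ad}_Z\circ T_j$ for every $Z\in \fr{g}_j$, so each eigenspace of $T_j$ is $\operatorname{ad}_{\fr{g}_j}$-invariant and hence an ideal of $\fr{g}_j$. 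Being $B$-symmetric $T_j$ is diagonalizable over $\R$, so $\fr{g}_j$ is the direct sum of these eigenspaces; simplicity of $\fr{g}_j$ then forces exactly one non-zero eigenspace, i.e.\ $T_j=\mu_j I$. Combined with step one this yields \eqref{DZppp}.

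The remaining statements follow immediately. Non-degeneracy and positive definiteness of $Q$ are diagonal conditions on the restrictions to the $\fr{g}_j$, and because $\left.B\right|_{\fr{g}_j\times \fr{g}_j}$ is positive definite they translate respectively to $\mu_j\neq 0$ and $\mu_j>0$. The bi-invariance characterization for a left-invariant metric $g$ on connected compact semisimple $G$ is then just the standard dictionary between bi-invariant metrics on $G$ and $\operatorname{Ad}$-invariant inner products on $\fr{g}$. The main subtlety I foresee is the irreducibility argument in step two; the approach above deliberately avoids invoking Schur's lemma over $\R$ by using the self-adjointness of $T_j$ to obtain a real eigenspace decomposition and then appealing to simplicity directly.
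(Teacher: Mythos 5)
Your proof is correct and complete: the paper itself gives no argument for this proposition (it simply cites D'Atri--Ziller), and your two-step argument --- $Q$-orthogonality of distinct simple ideals via $[\fr{g}_i,\fr{g}_j]=\{0\}$ together with perfectness $[\fr{g}_i,\fr{g}_i]=\fr{g}_i$, then diagonalizing the $B$-symmetric intertwiner $T_j$ and using that its eigenspaces are ideals of the simple algebra $\fr{g}_j$ --- is exactly the standard proof of this classical fact. Your closing reduction of the bi-invariance statement to the dictionary between bi-invariant metrics and $\operatorname{Ad}$-invariant inner products is also consistent with the paper's own conventions (its Lemma on bi-invariance assumes $G$ compact and connected, as you do), so no gap remains.
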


We close this subsection by turning our attention to some useful results for geodesic orbit metrics on compact spaces.  We assume that $G$ is a compact Lie group with Lie algebra $\fr{g}$, and we fix an $\operatorname{Ad}$-invariant inner product $B$ on $\fr{g}$ (if $\fr{g}$ is semisimple, $B$ will denote the negative of the Killing form).  Moreover, if $H$ is a closed subgroup of $G$ with Lie algebra $\fr{h}$, we fix a $B$-orthogonal reductive decomposition $\fr{g}=\fr{h}\oplus \fr{m}$. For the rest of the paper, we will make no distinction between a $G$-invariant metric $g$ on $G/H$ and its corresponding metric endomorphism $A:\fr{m}\rightarrow \fr{m}$ satisfying Equation \eqref{MetEnd}.  As a result, the notations $(G/H,g)$, $(G/H,A)$ will denote the same object.\\

The following is a necessary and sufficient algebraic condition for a $G$-invariant metric to be g.o., arising from condition \eqref{Kow}. 
    
\begin{prop}\label{GOCond}\emph{(\cite{AlAr}, \cite{So})} Let $G$ be a compact Lie group.  The Riemannian space $(G/H,A)$ is a g.o. space if and only if there exists a map $\xi:\fr{m}\rightarrow \fr{h}$ such that 

\begin{equation}\label{GO}[\xi(X)+X,AX]=0 \ \ \makebox{for all} \ \ X\in \fr{m}.\end{equation}
\end{prop}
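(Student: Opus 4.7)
The plan is to derive this proposition directly from the Kowalski--Vanhecke geodesic lemma \eqref{Kow}, translating it into the language of the metric endomorphism $A$ via the $\operatorname{Ad}$-invariant product $B$. All the needed ingredients are at hand: the defining relation $g(U,V)_o=B(AU,V)$ with $A$ both $B$-symmetric and $\operatorname{ad}_{\fr{h}}$-equivariant, the $B$-orthogonal splitting $\fr{g}=\fr{h}\oplus\fr{m}$, and the $\operatorname{ad}$-invariance of $B$.

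First I would rewrite the scalar appearing in \eqref{Kow}. Using symmetry of $A$,
\[
g\bigl([\xi(X)+X,Y]_{\fr{m}},X\bigr)_o = B\bigl([\xi(X)+X,Y]_{\fr{m}},AX\bigr),
\]
and since $AX\in\fr{m}$ is $B$-orthogonal to $\fr{h}$, the $\fr{m}$-projection is redundant, so this equals $B([\xi(X)+X,Y],AX)$. Transferring the bracket by $\operatorname{ad}$-invariance of $B$ gives $-B(Y,[\xi(X)+X,AX])$. Hence \eqref{Kow} is equivalent to requiring that $[\xi(X)+X,AX]$ be $B$-orthogonal to $\fr{m}$ for every $X\in\fr{m}$, i.e.\ that it lie in $\fr{h}$.

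To upgrade this to the full vanishing \eqref{GO}, it suffices to show that $[\xi(X)+X,AX]$ also lies in $\fr{m}$, for then it lies in $\fr{m}\cap\fr{h}=\{0\}$. Split
\[
[\xi(X)+X,AX] = [\xi(X),AX] + [X,AX];
\]
the first term is in $\fr{m}$ by reductivity $[\fr{h},\fr{m}]\subseteq\fr{m}$. For the second, the key lemma is that $[X,AX]_{\fr{h}} = 0$: for any $H\in\fr{h}$, $\operatorname{ad}$-invariance of $B$ gives $B([X,AX],H)=B(AX,[H,X])$; then the symmetry and $\operatorname{ad}_{\fr{h}}$-equivariance of $A$ yield
\[
B(AX,[H,X]) = B(X,[H,AX]) = -B([H,X],AX) = -B(AX,[H,X]),
\]
forcing $B([X,AX],H)=0$ for all $H\in\fr{h}$. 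This gives \eqref{GO}. The converse is immediate, since \eqref{GO} trivially implies the vanishing of the $\fr{m}$-component and hence of the quantity appearing in \eqref{Kow}.

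The main obstacle is really only the auxiliary identity $[X,AX]_{\fr{h}}=0$; once that short invariance-and-symmetry calculation is in hand, the proposition is essentially a reformulation of the classical geodesic lemma. Compactness of $G$ enters only through the existence of the $\operatorname{Ad}$-invariant inner product $B$, which is what permits each of the above manipulations.
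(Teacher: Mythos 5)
Your argument is correct: the paper states Proposition \ref{GOCond} without an internal proof, citing \cite{AlAr} and \cite{So}, and your derivation—rewriting the Kowalski--Vanhecke condition \eqref{Kow} through the $\operatorname{Ad}$-invariant product $B$, and then using the $B$-symmetry and $\operatorname{ad}_{\fr{h}}$-equivariance of $A$ to show that $[X,AX]$ has vanishing $\fr{h}$-component—is exactly the standard route taken in those references. Nothing is missing; the auxiliary identity $[X,AX]_{\fr{h}}=0$ is precisely the point that upgrades ``$[\xi(X)+X,AX]\in\fr{h}$'' to the full vanishing \eqref{GO}, and your converse direction is immediate as stated.
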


\noindent Accordingly, Proposition \ref{SZ1} is formulated as follows for $G$ compact.

\begin{prop}\label{SZ2} 
Let $G$ be a compact Lie group and let $(G/H,g)$ be a Riemannian homogeneous space with the reductive decomposition $\fr{g}=\fr{h}\oplus \fr{m}$, and with corresponding metric endomorphism $A:\fr{m}\rightarrow \fr{m}$ satisfying Equation \eqref{MetEnd}. Then $(G/H,g)$ is ($G$-)naturally reductive with respect to some reductive decomposition $\fr{g}=\fr{h}\oplus \widetilde{\fr{m}}$ if and only if there exists an $\operatorname{Ad}_H$-equivariant linear map $\xi:\fr{m}\rightarrow \fr{h}$ such that condition \eqref{GO} is true.  Moreover, $\widetilde{\fr{m}}=\{\xi(X)+X:X\in \fr{m}\}$.
\end{prop}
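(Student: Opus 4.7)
The plan is to deduce Proposition~\ref{SZ2} from Proposition~\ref{SZ1} by showing that, for $G$ compact and any $\operatorname{Ad}_H$-equivariant linear map $\xi\colon\fr{m}\to\fr{h}$, conditions \eqref{Kow} and \eqref{GO} are equivalent. Granted this equivalence, Proposition~\ref{SZ2}, including the description $\widetilde{\fr{m}}=\{\xi(X)+X:X\in\fr{m}\}$, is just the transcription of Proposition~\ref{SZ1} with \eqref{Kow} replaced by \eqref{GO}.

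First, I would translate \eqref{Kow} into a purely algebraic statement. Using $g(U,V)_o=B(AU,V)$ from \eqref{MetEnd}, the $B$-symmetry of $A$, the inclusion $AX\in\fr{m}$ (which lets one drop the $\fr{m}$-projection when pairing with $AX$), and the $\operatorname{Ad}$-invariance of $B$, one obtains
\begin{align*}
g([\xi(X)+X,Y]_{\fr{m}},X)_o
 &= B([\xi(X)+X,Y]_{\fr{m}},AX) \\
 &= B([\xi(X)+X,Y],AX) = -B(Y,[\xi(X)+X,AX]).
\end{align*}
Since $Y\in\fr{m}$ is arbitrary, \eqref{Kow} is equivalent to $[\xi(X)+X,AX]\in\fr{h}$. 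Because $[\xi(X),AX]\in[\fr{h},\fr{m}]\subseteq\fr{m}$, this says exactly $[\xi(X)+X,AX]_{\fr{m}}=0$, i.e.\ the $\fr{m}$-component of \eqref{GO}. The remaining $\fr{h}$-component reduces to $[X,AX]_{\fr{h}}=0$.

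The key step is to show that $[X,AX]_{\fr{h}}=0$ holds automatically for every metric endomorphism. For $Z\in\fr{h}$ I would compute
\begin{align*}
B([X,AX],Z)
 &= B(AX,[Z,X]) = B(X,A[Z,X]) \\
 &= B(X,[Z,AX]) = -B([Z,X],AX) = -B(AX,[Z,X]),
\end{align*}
where the first and fourth equalities use $\operatorname{Ad}$-invariance of $B$, the second uses $B$-symmetry of $A$, the third uses the $\operatorname{ad}_{\fr{h}}$-equivariance $A[Z,X]=[Z,AX]$, and the last uses the symmetry of $B$. Comparing the first and last expressions forces $B(AX,[Z,X])=0$, hence $B([X,AX],Z)=0$ for every $Z\in\fr{h}$, i.e.\ $[X,AX]_{\fr{h}}=0$. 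Combined with the previous paragraph, this yields \eqref{Kow} $\Leftrightarrow$ \eqref{GO}, and Proposition~\ref{SZ1} completes the proof. The only delicate point is arranging the three standing properties of $A$—$B$-symmetry, $\operatorname{ad}_{\fr{h}}$-equivariance, and $\operatorname{Ad}$-invariance of $B$—in precisely this order, so that the two invocations of $\operatorname{Ad}$-invariance produce opposite signs and the cancellation $B(AX,[Z,X])=-B(AX,[Z,X])$ drops out; removing any one of the three ingredients breaks the argument.
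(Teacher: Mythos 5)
Your proposal is correct and follows essentially the same route as the paper: Proposition \ref{SZ2} is obtained from Proposition \ref{SZ1} by translating condition \eqref{Kow} into condition \eqref{GO} in the compact, $B$-orthogonal setting, which is exactly the content the paper delegates to Proposition \ref{GOCond} and its references. Your explicit verification that $[X,AX]_{\fr{h}}=0$ (via $B$-symmetry, $\operatorname{ad}_{\fr{h}}$-equivariance of $A$ and $\operatorname{Ad}$-invariance of $B$) is the standard computation underlying that equivalence, so no gap remains.
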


For a g.o. space $(G/H,A)$, let $\fr{m}_{\lambda_1},\fr{m}_{\lambda_2}\subset \fr{m}$ be two eigenspaces of $A$ with respect to distinct eigenvalues $\lambda_1,\lambda_2$ respectively.  Assume that $\fr{m}_1,\fr{m}_2$ are $\operatorname{ad}_{\fr{h}}$-invariant subspaces such that $\fr{m}_1\subseteq\fr{m}_{\lambda_1}$ and $\fr{m}_2\subseteq\fr{m}_{\lambda_2}$.  Using Equation \eqref{GO} and the $\operatorname{ad}_{\fr{h}}$-invariance of $\fr{m}_{\lambda_1}$ and $\fr{m}_{\lambda_2}$, it is not hard to show that 

\begin{equation}\label{EiBra}[\fr{m}_1,\fr{m}_2]\subseteq \fr{m}_1\oplus \fr{m}_2,\end{equation}
  
\noindent (see for example \cite{AlNi}, Corollary 3).  The above property is quite useful; It allows one to simplify the possible form of the g.o. metrics by reducing the number of the distinct eigenvalues of the corresponding metric endomorphism.  In particular, we have the following result (see also (\cite{So}, Proposition 5).

\begin{lemma}\label{EigenEq}
 Let $(G/H,A)$ be a g.o. space with $G$ compact.  Assume that $\lambda_1,\lambda_2$ are eigenvalues of $A$ and that there exist $\operatorname{ad}_{\fr{h}}$-invariant subspaces $\fr{m}_1,\fr{m}_2$ of $\fr{m}$ satisfying the following:\\
  \emph{{1.}} $\left.A\right|_{\fr{m}_1}=\lambda_1\operatorname{Id}$ and $\left.A\right|_{\fr{m}_2}=\lambda_2\operatorname{Id}$.\\
   \emph{{2.}} The space $[\fr{m}_1,\fr{m}_2]$ has non-zero projection on $(\fr{m}_1+\fr{m}_2)^{\bot}$.\\
    Then $\lambda_1=\lambda_2$. 
\end{lemma}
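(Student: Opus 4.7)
The plan is to argue by contraposition, showing that if $\lambda_1 \neq \lambda_2$ then $[\fr{m}_1,\fr{m}_2]\subseteq \fr{m}_1+\fr{m}_2$, which directly contradicts hypothesis 2. In effect the lemma is little more than a restatement of the inclusion \eqref{EiBra} cited just above, applied to $\fr{m}_1$ and $\fr{m}_2$, so I would re-derive that inclusion from the geodesic orbit condition \eqref{GO} in a self-contained way.

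So suppose $\lambda_1 \neq \lambda_2$. I would fix arbitrary $X_1\in \fr{m}_1$ and $X_2\in \fr{m}_2$, set $X:=X_1+X_2\in \fr{m}$, and use Proposition \ref{GOCond} to produce $Z:=\xi(X)\in \fr{h}$ with $[Z+X,AX]=0$. Since hypothesis 1 gives $AX=\lambda_1 X_1+\lambda_2 X_2$, a direct expansion (killing the two self-brackets) yields
\begin{equation*}
[X,AX]=(\lambda_2-\lambda_1)[X_1,X_2].
\end{equation*}

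Next I would decompose everything $B$-orthogonally with respect to $V:=\fr{m}_1+\fr{m}_2$. The $\operatorname{ad}_{\fr{h}}$-invariance of $\fr{m}_1$ and $\fr{m}_2$ forces $[Z,\fr{m}_i]\subseteq \fr{m}_i$, hence $[Z,AX]=\lambda_1[Z,X_1]+\lambda_2[Z,X_2]\in V$; in particular the $V^{\perp}$-component of $[Z,AX]$ vanishes. Combining with $[Z,AX]=-[X,AX]=-(\lambda_2-\lambda_1)[X_1,X_2]$ and projecting on $V^{\perp}$, the assumption $\lambda_1\neq \lambda_2$ lets me divide out to conclude that the $V^{\perp}$-component of $[X_1,X_2]$ is zero. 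Since $X_1\in \fr{m}_1$ and $X_2\in \fr{m}_2$ were arbitrary, this gives $[\fr{m}_1,\fr{m}_2]\subseteq \fr{m}_1+\fr{m}_2$, contradicting the hypothesis that $[\fr{m}_1,\fr{m}_2]$ has nonzero projection on $V^{\perp}$.

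I do not expect a real obstacle here; the only point to handle carefully is that $Z$ a priori depends on $X=X_1+X_2$, so the argument must be made vector by vector rather than on all of $\fr{m}_1\times\fr{m}_2$ at once, but this is precisely why the projection on $V^{\perp}$ is taken before quantifying over $X_1,X_2$. The rest is just the observation that the g.o. equation \eqref{GO}, combined with the $\operatorname{ad}_{\fr{h}}$-invariance of the eigenspaces, forces the cross bracket $[X_1,X_2]$ to live in $V$ as soon as the corresponding eigenvalues differ.
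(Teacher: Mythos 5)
Your proposal is correct and follows essentially the same route as the paper: the paper's proof is the same contraposition, noting $\fr{m}_i\subseteq\fr{m}_{\lambda_i}$ when $\lambda_1\neq\lambda_2$ and then quoting the inclusion \eqref{EiBra} to contradict hypothesis 2. The only difference is that you re-derive \eqref{EiBra} from the g.o. condition \eqref{GO} (expanding $[\xi(X)+X,AX]=0$ for $X=X_1+X_2$ and using $\operatorname{ad}_{\fr{h}}$-invariance), which is precisely the standard argument the paper attributes to \cite{AlNi}, so your write-up is just a self-contained version of the same proof.
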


\begin{proof} Let $\fr{m}_{\lambda_1}, \fr{m}_{\lambda_2}$ be the corresponding eigenspaces of $\lambda_1,\lambda_2$.  If $\lambda_1\neq \lambda_2$, then condition {1.} in the hypothesis yields $\fr{m}_i\subseteq \fr{m}_{\lambda_i}$, $i=1,2$.  Relation \eqref{EiBra} then yields $[\fr{m}_1,\fr{m}_2]\subseteq \fr{m}_1\oplus \fr{m}_2$, which contradicts condition {2.} in the hypothesis.\end{proof}

\subsection{Totally geodesic orbits and induced geodesic orbit spaces}\label{IndGO}

Let $(G/H,g)$ be a Riemannian g.o. space with $G$ compact, and let $K$ be a closed subgroup of $G$ containing $H$.  Let $\fr{h},\fr{k}$ be the Lie algebras of $H,K$ respectively and consider the $B$-orthogonal decompositions $\fr{k}=\fr{h}\oplus \fr{p}$ and $\fr{g}=\fr{k}\oplus \fr{q}=\fr{h}\oplus \fr{p}\oplus \fr{q}$, so that decomposition $\fr{g}=\fr{h}\oplus \fr{m}$ yields $\fr{m}=\fr{p}\oplus \fr{q}$.  The subspaces $\fr{p}$ and $\fr{q}$ can be naturally identified with the tangent spaces $T_o(K/H)$ and $T_{\pi(o)}(G/K)$ respectively, where $\pi:G/H\rightarrow G/K$ is the natural projection. The following result of Alekseevsky and Nikonorov gives sufficient conditions for the induced inner products, $\left.g( \ ,\ )_o\right|_{\fr{p}\times \fr{p}}$ and $\left.g( \ ,\ )_o\right|_{\fr{q}\times \fr{q}}$, to define g.o. metrics on the spaces $K/H$ and $G/K$ respectively.

\begin{prop}\label{AlNiInd}\emph{(\cite{AlNi})}
Let $(G/H,g)$ be a g.o. space with $G$ compact and with corresponding metric endomorphism $A$. Then any closed, connected subgroup $K$ of $G$, which contains $H$, has the totally geodesic orbit $P=Ko=K/H$ which is a g.o. space (with respect to the induced metric $\left.g( \ ,\ )_o\right|_{\fr{p}\times \fr{p}}$). Moreover, if the space $\fr{p}:=\fr{k}\cap \fr{m}$ is $A$-invariant then $[\fr{k},\fr{m}^{\bot}]\subseteq \fr{m}^{\bot}$, and the product $\left.g( \ ,\ )_o\right|_{\fr{q}\times \fr{q}}$ is $\operatorname{Ad}_K$-invariant and defines a $G$-invariant g.o. metric on the homogeneous manifold $G/K$. The projection $\pi: G/H \rightarrow G/K$ is a Riemannian submersion
with totally geodesic fibers such that the fibers and the base are g.o. spaces.
\end{prop}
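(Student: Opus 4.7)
My plan is to derive each assertion from the algebraic g.o.\ characterization in Proposition \ref{GOCond} together with the bracket restriction \eqref{EiBra}, working throughout with the $B$-orthogonal decomposition $\fr{g} = \fr{h} \oplus \fr{p} \oplus \fr{q}$. For the totally geodesic orbit, let $X \in \fr{p}$; Proposition \ref{GOCond} supplies $\xi(X) \in \fr{h}$ with $[\xi(X) + X, AX] = 0$, so the geodesic of $(G/H, g)$ at $o$ with initial velocity $X$ is $\gamma(t) = \exp\bigl(t(\xi(X) + X)\bigr) \cdot o$. Since $\xi(X) + X \in \fr{h} \oplus \fr{p} = \fr{k}$, this geodesic stays inside $K \cdot o = P$, so $P$ is totally geodesic; the same one-parameter subgroups, now viewed in $K$, realize the geodesics of the induced metric on $P = K/H$, making it a g.o.\ space.

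Assume from now on that $\fr{p}$ is $A$-invariant. Symmetry of $A$ with respect to $B$ then forces $\fr{q}$ to be $A$-invariant and $\fr{p} \perp_g \fr{q}$, while $\Ad$-invariance of $B$ yields $[\fr{k}, \fr{q}] \subseteq \fr{q}$ immediately (the content I assign to $[\fr{k}, \fr{m}^\bot] \subseteq \fr{m}^\bot$ in this setting). The main task is to show that $A|_{\fr{q}}$ is $\ad_{\fr{k}}$-equivariant, so that $B(A|_{\fr{q}} \cdot, \cdot)$ becomes an $\Ad_K$-invariant inner product on $\fr{q}$ and thus defines a $G$-invariant metric on $G/K$. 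Decomposing $\fr{p} = \bigoplus_\lambda \fr{p}_\lambda$ and $\fr{q} = \bigoplus_\mu \fr{q}_\mu$ into the $\ad_{\fr{h}}$-invariant $A$-eigenspaces (each contained in the corresponding $A$-eigenspace of $\fr{m}$), the task beyond the automatic $\ad_{\fr{h}}$-equivariance reduces to $[\fr{p}_\lambda, \fr{q}_\mu] \subseteq \fr{q}_\mu$ for every $\lambda, \mu$. When $\lambda \neq \mu$, relation \eqref{EiBra} gives $[\fr{p}_\lambda, \fr{q}_\mu] \subseteq \fr{p}_\lambda \oplus \fr{q}_\mu$, and intersecting with $\fr{q}$ produces $\fr{q}_\mu$.

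The main obstacle is the diagonal case $\lambda = \mu$, which I plan to handle by pivoting through a third eigenspace. For any $Y \in \fr{q}_\nu$ with $\nu \neq \mu$, $\Ad$-invariance of $B$ gives $B([\fr{p}_\mu, \fr{q}_\mu], Y) = B(\fr{p}_\mu, [\fr{q}_\mu, Y])$; applying \eqref{EiBra} to $[\fr{q}_\mu, \fr{q}_\nu]$ confines it to $\fr{q}_\mu \oplus \fr{q}_\nu$, and since $\fr{p}_\mu$ is $B$-orthogonal to $\fr{q}$, the pairing vanishes for every such $Y$. Thus $[\fr{p}_\mu, \fr{q}_\mu] \subseteq \fr{q}_\mu$, completing the $\ad_{\fr{k}}$-equivariance of $A|_{\fr{q}}$. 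The resulting $G$-invariant metric on $G/K$ is g.o.\ because for every $X \in \fr{q}$ the geodesic graph value $\xi(X) \in \fr{h} \subseteq \fr{k}$ furnished by $(G/H, A)$ satisfies condition \eqref{GO} on $(G/K, A|_{\fr{q}})$. Finally, $\fr{p} \perp_g \fr{q}$ identifies $\fr{q}$ with the horizontal distribution, making $\pi: G/H \to G/K$ a Riemannian submersion whose fibers are $G$-translates of $P$ and hence totally geodesic g.o.\ spaces by the first step.
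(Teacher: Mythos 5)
The paper itself offers no proof of this proposition; it is quoted verbatim from \cite{AlNi}, and only Corollary \ref{AlNiInd1} is proved from it. So your argument has to stand on its own, and it does: all the main steps are correct and stay within the paper's toolkit. The totally geodesic orbit follows, as you say, from Proposition \ref{GOCond}: for $X\in\fr{p}$ the geodesic vector $\xi(X)+X$ lies in $\fr{h}\oplus\fr{p}=\fr{k}$, so the geodesic through $o$ tangent to $P$ stays in $K\cdot o$. The inclusion $[\fr{k},\fr{q}]\subseteq\fr{q}$ is indeed immediate from $\operatorname{Ad}$-invariance of $B$ and $\fr{k}$ being a subalgebra, and your treatment of the crux — $\operatorname{ad}_{\fr{k}}$-equivariance of $\left.A\right|_{\fr{q}}$ — is correct: the reduction to $[\fr{p}_\lambda,\fr{q}_\mu]\subseteq\fr{q}_\mu$ is valid, \eqref{EiBra} handles $\lambda\neq\mu$, and the pivot computation $B([\fr{p}_\mu,\fr{q}_\mu],\fr{q}_\nu)=B(\fr{p}_\mu,[\fr{q}_\mu,\fr{q}_\nu])\subseteq B(\fr{p},\fr{q})=\{0\}$ together with $[\fr{p}_\mu,\fr{q}_\mu]\subseteq\fr{q}$ settles the diagonal case (in fact this pivot argument covers the off-diagonal case as well, so the case split is unnecessary). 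The g.o. property of $(G/K,\left.A\right|_{\fr{q}})$ via $\xi(X)\in\fr{h}\subseteq\fr{k}$ and \eqref{GO} is exactly the kind of restriction trick the paper uses elsewhere (e.g. in Proposition \ref{Ind}), and the Riemannian-submersion statement follows from $\fr{p}\perp_g\fr{q}$ and $G$-equivariance of $\pi$.

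Two small expository gaps you should close. First, your use of Proposition \ref{GOCond} implicitly invokes the pointwise content of the geodesic lemma: from $[\xi(X)+X,AX]=0$ one gets condition \eqref{Kow} for that $X$, hence $t\mapsto\exp\bigl(t(\xi(X)+X)\bigr)\cdot o$ is a geodesic, and only then does uniqueness of geodesics identify it as \emph{the} geodesic with initial velocity $X$; a sentence to this effect is needed, since Proposition \ref{GOCond} as stated is only a global equivalence. Second, total geodesy of $P$ and the g.o. property of $(K/H,\left.g(\ ,\ )_o\right|_{\fr{p}\times\fr{p}})$ are verified by you only for geodesics through $o$; you should add that $K$ acts transitively on $P$ by isometries, so translating by $\tau_k$ (and conjugating the one-parameter subgroup, $k\exp(tZ)\cdot o=\exp(t\operatorname{Ad}_kZ)k\cdot o$) extends both claims to all points of $P$, and similarly that the other fibers are isometric translates $\tau_g(P)$. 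With these routine additions the proof is complete.
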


\noindent As a result, we obtain the following.

\begin{corol}\label{AlNiInd1}
Let $(G/H,g)$ be a g.o. space with corresponding metric endomorphism $A$, and let $K$ be a closed, connected subgroup of $G$ that contains $H$. If the space $\fr{p}:=\fr{k}\cap \fr{m}$ is $A$-invariant then the maps $\left.A\right|_{\fr{p}}$ and $\left.A\right|_{\fr{q}}$ are endomorphisms of $\fr{p}$ and $\fr{q}$ respectively.  Moreover, $\left.A\right|_{\fr{p}}$ defines a $K$-invariant g.o. metric on $K/H$ and $\left.A\right|_{\fr{q}}$ defines a $G$-invariant g.o. metric on $G/K$.
\end{corol}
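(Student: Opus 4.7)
The plan is to invoke Proposition \ref{AlNiInd} directly, after establishing that the restrictions $A|_{\fr{p}}$ and $A|_{\fr{q}}$ are genuinely endomorphisms of $\fr{p}$ and $\fr{q}$ and that they are the metric endomorphisms corresponding to the induced inner products on these subspaces.

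The first step is to observe that since $A$ is $B$-symmetric and $\fr{m}=\fr{p}\oplus\fr{q}$ is a $B$-orthogonal decomposition by construction, $A$-invariance of $\fr{p}$ automatically forces $A$-invariance of $\fr{q}$: for $X\in\fr{q}$ and any $Y\in\fr{p}$, $B(AX,Y)=B(X,AY)=0$ since $AY\in\fr{p}$, hence $AX\in\fr{q}$. Consequently $A|_{\fr{p}}$ and $A|_{\fr{q}}$ are well-defined endomorphisms of $\fr{p}$ and $\fr{q}$ respectively.

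Next, I would identify the metric endomorphisms of the induced inner products $\left.g(\,,\,)_o\right|_{\fr{p}\times\fr{p}}$ and $\left.g(\,,\,)_o\right|_{\fr{q}\times\fr{q}}$ with $A|_{\fr{p}}$ and $A|_{\fr{q}}$ respectively. For any $X,Y\in\fr{p}$, Equation \eqref{MetEnd} gives $g(X,Y)_o=B(AX,Y)=B(A|_{\fr{p}}X,Y)$, and similarly for $\fr{q}$. The $\operatorname{Ad}_H$-equivariance of $A|_{\fr{p}}$ is inherited from that of $A$ together with the $\operatorname{Ad}_H$-invariance of $\fr{p}$ (which follows from $H\subseteq K$); this ensures $A|_{\fr{p}}$ corresponds to a well-defined $K$-invariant metric on $K/H$. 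The first part of Proposition \ref{AlNiInd} then guarantees that this metric is geodesic orbit.

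Finally, for the base $G/K$, the hypothesis that $\fr{p}$ is $A$-invariant is exactly what the second part of Proposition \ref{AlNiInd} requires, so the induced product on $\fr{q}$ is $\operatorname{Ad}_K$-invariant and defines a $G$-invariant g.o. metric on $G/K$. Its metric endomorphism is $A|_{\fr{q}}$ by the computation of the previous paragraph. There is no real obstacle here beyond the bookkeeping of invariance properties; the substantive content is already contained in Proposition \ref{AlNiInd}, and the corollary is essentially a repackaging that emphasizes how the single endomorphism $A$ decomposes as the direct sum $A|_{\fr{p}}\oplus A|_{\fr{q}}$ under the splitting $\fr{m}=\fr{p}\oplus\fr{q}$.
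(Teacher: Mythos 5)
Your proposal is correct and follows essentially the same route as the paper: the symmetry of $A$ with respect to $B$ gives $A\fr{q}\subseteq\fr{q}$ from the $A$-invariance of $\fr{p}$, the restrictions are identified as the metric endomorphisms of the induced products via Equation \eqref{MetEnd}, and Proposition \ref{AlNiInd} supplies the geodesic orbit property on both the fiber $K/H$ and the base $G/K$. No gaps.
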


\begin{proof}
If the space $\fr{p}$ is $A$ invariant then $\left.A\right|_{\fr{p}}$ is an endomorphism.  Moreover, the symmetry of $A$ with respect to $B$ implies that $B(A\fr{q},\fr{p})\subseteq  B(\fr{q},A\fr{p})\subseteq B(\fr{q},\fr{p})=\{0\}$.  Therefore, $A\fr{q}$ is orthogonal to $\fr{p}$, and the decomposition $\fr{m}=\fr{p}\oplus \fr{q}$ yields $A\fr{q}\subseteq \fr{q}$. Hence, $\left.A\right|_{\fr{q}}$ is an endomorphism.  Moreover, $g(X ,Y)_o=B(\left.A\right|_{\fr{p}}X,Y)$, for all $X,Y\in \fr{p}$, and  $g(X ,Y)_o=B(\left.A\right|_{\fr{q}}X,Y)$, for all $X,Y\in \fr{q}$.  On the other hand, by Proposition \ref{AlNiInd}, the products $\left.g( \ ,\ )_o\right|_{\fr{p}\times \fr{p}}$ and $\left.g( \ ,\ )_o\right|_{\fr{q}\times \fr{q}}$ define invariant g.o. metrics on the spaces $K/H$ and $G/K$ respectively, and hence the same is true for their metric endomorphisms $\left.A\right|_{\fr{p}}$ and $\left.A\right|_{\fr{q}}$.\end{proof}

\section{Reduction results for g.o. metrics on $G/S$}\label{Reduction}

We will henceforth assume the following.

\begin{assumption}\label{AsFla} The group $G$ is compact, connected, semisimple and $S$ is a torus in $G$.
\end{assumption}

Let $\fr{g}, \fr{s}$ be the Lie algebras of $G,S$ respectively, let $B$ denote the negative of the Killing form of $\fr{g}$ and fix a $B$-orthogonal decomposition $\fr{g}=\fr{s}\oplus \fr{m}$. Moreover, let 

\begin{equation*}C_G(S)=\{x\in G: xsx^{-1}=s \ \makebox{for all} \ s\in S\}\end{equation*}

\noindent be the centralizer of $S$ in $G$.  In this section we prove that any g.o. metric on $G/S$ induces a $G$-invariant g.o. metric on the space $G/C_G(S)$ and a bi-invariant metric on the compact Lie group $C_G(S)/S$ ($S$ is a normal subgroup of $C_G(S)$).  The space $G/C_G(S)$ is a generalized flag manifold; One of the equivalent definitions  of generalized flag manifolds is the following (see for example \cite{Ar1}).  

\begin{definition}\label{flagdef} Let $G$ be a compact and connected semisimple Lie group. A generalized flag manifold is a homogeneous space of the form $G/C_G(S)$, where $S$ is a torus in $G$. If $S$ is a maximal torus then $C_G(S)=S$, and $G/S$ is called a full flag manifold.   
\end{definition} 

The $G$-invariant Riemannian g.o. metrics on flag manifolds were classified by Alekseevski and Arvanitoyeorgos in \cite{AlAr}, who reduced the classification to flag manifolds of simple Lie groups. We state the classification result which will be used in the sequel.

\begin{theorem}\label{AlArv}\emph{(\cite{AlAr})}
The only flag manifolds  $M=G/K$ of a simple Lie group $G$ which admit an invariant g.o. metric, not homothetic to the standard metric, are the manifolds $SO(2l+l)/U(l)$ of complex structures in $R^{2l+2}$ and the complex projective space $Sp(l)/(U(1)\times Sp(l-1))$. These manifolds admit a one-parameter family of invariant metrics (up to a scaling). All these metrics are g.o. metrics and are weakly symmetric.  Moreover, the corresponding spaces are not naturally reductive.
\end{theorem}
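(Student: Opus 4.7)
The plan is to use the root-theoretic description of flag manifolds combined with the algebraic g.o. condition from Proposition \ref{GOCond}. Let $G$ be a compact connected simple Lie group with Lie algebra $\fr{g}$, fix a maximal torus $T\subseteq K:=C_G(S)$, and let $R$ denote the root system of $\fr{g}^{\mathbb{C}}$ with respect to $T^{\mathbb{C}}$. Split $R$ into the roots $R_K$ of $K$ and the complementary roots $R_M$. Restricting the latter to the center $\fr{z}(\fr{k})$ (dual to $S$) produces the set of \emph{$t$-roots}, and the isotropy representation decomposes as $\fr{m}=\bigoplus_{\xi} \fr{m}_{\xi}$ indexed by positive $t$-roots, each summand being irreducible (and pairwise inequivalent). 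Any $G$-invariant metric is therefore diagonal: $A=\sum_{\xi}\lambda_{\xi}\operatorname{Id}_{\fr{m}_{\xi}}$ with $\lambda_{\xi}>0$.

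First I would write the g.o. condition \eqref{GO} on test vectors of the form $X=E_{\alpha}+E_{\beta}$ for roots $\alpha\in R_M\cap\xi_1$, $\beta\in R_M\cap\xi_2$, where $E_{\alpha}$ are suitably normalized root vectors. The bracket $[E_{\alpha}+E_{\beta},A(E_{\alpha}+E_{\beta})]$ contains terms proportional to $E_{\alpha+\beta}$ with coefficient $(\lambda_{\xi_1}-\lambda_{\xi_2})N_{\alpha,\beta}$. For this to be compensated by $[\xi(X),AX]\in [\fr{k},\fr{m}]$, the component corresponding to any $t$-root $\xi_3=\xi_1+\xi_2$ must either vanish algebraically or force $\lambda_{\xi_1}=\lambda_{\xi_2}$. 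This is essentially Lemma \ref{EigenEq} applied at the level of root spaces and $t$-roots. The upshot is that whenever the $t$-root system contains a triple $\xi_1+\xi_2=\xi_3$ with all three distinct and nonzero structure constants, the associated $\lambda$'s must coincide.

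Second, I would carry out the case analysis over painted Dynkin diagrams. For every simple $\fr{g}$, the flag manifolds $G/K$ are classified by painting a subset of simple roots of $\fr{g}$ black. The resulting $t$-root systems are known explicitly (type $A_r$, $B_r$, $BC_r$, etc., depending on the painting). A direct combinatorial check shows that in all cases except the two listed, either the $t$-root system has only one $t$-root (so the metric is standard) or it contains enough additive chains to force all $\lambda_{\xi}$ equal by the previous step. The two surviving families have exactly two isotropy summands and the unique chain $\xi_1+\xi_1=\xi_2$ (where $\xi_1$ has multiplicity appropriate so that the obstruction vanishes), leaving a one-parameter freedom $\lambda_{\xi_2}/\lambda_{\xi_1}>0$.

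For the surviving families $\SO(2l+2)/\U(l+1)$ and $\Sp(l)/(\U(1)\times\Sp(l-1))$, I would then exhibit an explicit geodesic graph $\xi$ verifying Equation \eqref{GO} for every $\lambda_{\xi_2}/\lambda_{\xi_1}>0$; this is where the structure of Hermitian symmetric spaces helps, since in both cases the non-standard metrics arise from a canonical fibration over a symmetric space. Weak symmetry follows from their appearance in Selberg and Wolf's classifications. The non-natural reductivity is obtained by showing via Proposition \ref{SZ2} that no \emph{linear} $\operatorname{Ad}_K$-equivariant $\xi$ satisfies \eqref{GO} for $\lambda_{\xi_1}\neq \lambda_{\xi_2}$: the required $\xi$ must be quadratic (or even non-differentiable at the origin) on $\fr{m}$. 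The hardest part of the program is the combinatorial case analysis of painted Dynkin diagrams in step two; once one knows the answer is confined to these two families, the verification of g.o.\ and the obstruction to natural reductivity are comparatively short.
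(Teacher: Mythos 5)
The paper does not prove Theorem \ref{AlArv} at all: it is imported verbatim from Alekseevsky--Arvanitoyeorgos \cite{AlAr} and used as an input (e.g.\ in the proof of Proposition \ref{SimpFlag}), so there is no internal proof to compare with and your proposal must be judged against the argument of \cite{AlAr} itself. Your outline does reproduce the correct general framework of that paper: the decomposition of the isotropy representation of a flag manifold into irreducible, pairwise inequivalent summands indexed by positive $t$-roots, the resulting diagonal form of any invariant metric, and the use of the algebraic g.o.\ condition \eqref{GO} on sums of root vectors to force equal eigenvalues whenever a bracket $[\fr{m}_{\xi_1},\fr{m}_{\xi_2}]$ has a component outside $\fr{m}_{\xi_1}\oplus\fr{m}_{\xi_2}$.

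The genuine gap is in your second step, which is where all the work of \cite{AlAr} lies. The criterion you offer for the survivors --- ``exactly two isotropy summands with the unique chain $\xi_1+\xi_1=\xi_2$'' --- does not discriminate: \emph{every} flag manifold obtained by painting a single simple root of Dynkin mark $2$ has $t$-root system of type $BC_1$, hence exactly two summands $\fr{q}^1,\fr{q}^2$ with $[\fr{q}^1,\fr{q}^2]\subseteq\fr{q}^1$ (compare \eqref{BRKT}), so the bracket obstruction vanishes identically for all of them. This is a sizable list (for instance $SO(2l+1)/(U(m)\times SO(2(l-m)+1))$ with $m<l$, $G_2/U(2)$, and several $F_4$, $E_6$, $E_7$, $E_8$ quotients), and by \cite{AlAr} all of these except the two named families admit only the standard g.o.\ metric. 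The decisive point is therefore not the combinatorics of additive chains of $t$-roots but the solvability, for $\lambda_1\neq\lambda_2$, of the geodesic graph equation $[\xi(X)+X,AX]=0$ with $\xi(X)\in\fr{k}$ for \emph{every} $X=X_1+X_2$; deciding for which two-summand flag manifolds this has a solution requires a case-by-case representation-theoretic analysis, and it singles out precisely $SO(2l+1)/U(l)$ and $Sp(l)/(U(1)\times Sp(l-1))$. Your phrase ``multiplicity appropriate so that the obstruction vanishes'' is exactly the assertion that needs proof and is not supplied; as written, your argument would leave all $BC_1$-type flag manifolds as candidates. The remaining items (explicit verification of the g.o.\ property for the two families, weak symmetry by citation, and non-natural reductivity by showing no linear $\operatorname{Ad}_K$-equivariant geodesic graph exists, via Proposition \ref{SZ2}) are reasonable in outline, though the last one also demands an actual computation rather than the assertion that $\xi$ ``must be quadratic''.
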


Before we proceed to the main results of this section, set $K:=C_G(S)$. Since $S$ is connected, the Lie algebra of $K$ is 
 
 \begin{equation}\label{LieAlgC}\fr{k}=\{Y\in \fr{g}:[Y,X]=0 \ \ \makebox{for all} \ \ X\in \fr{s}\}.\end{equation}
 
 \noindent   Given that $\fr{s}\subseteq \fr{k}$, we consider the $B$-orthogonal decompositions $\fr{k}=\fr{s}\oplus \fr{p}$ and $\fr{g}=\fr{k}\oplus \fr{q}$, so that the tangent space at the origin of $G/S$ decomposes as $\fr{m}=\fr{p}\oplus \fr{q}$.  Here $\fr{p}$ is the Lie algebra of $C_G(S)/S$ and $\fr{q}$ is the tangent space at the origin of the flag manifold $G/C_G(S)$.  We have the following.

\begin{prop}\label{Ind}
Let $(G/S, A)$ be a g.o. space, where $G$ is a compact, connected and semisimple Lie group and $S$ is a torus. Then both maps $\left.A\right|_{\fr{p}}$ and $\left.A\right|_{\fr{q}}$ are endomorphisms, and the spaces $\big(C_G(S)/S, \left.A\right|_{\fr{p}}\big)$ and $\big(G/C_G(S), \left.A\right|_{\fr{q}}\big)$ are g.o. spaces.  Moreover, the metric $\left.A\right|_{\fr{p}}$ on the Lie group $C_G(S)/S$ is bi-invariant.
\end{prop}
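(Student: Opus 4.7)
The plan is to reduce the claim to Corollary \ref{AlNiInd1} applied with the subgroup $K := C_G(S)$, and then to invoke the Alekseevsky--Nikonorov result (quoted in subsection \ref{GOMetrics}) that any left-invariant geodesic orbit metric on a compact connected Lie group is necessarily bi-invariant. With this framework, only one non-trivial verification remains, namely that $\fr{p} = \fr{k}\cap\fr{m}$ is $A$-invariant; the rest is bookkeeping.

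First I would check the $A$-invariance of $\fr{p}$. For $Y\in\fr{p}$ and $X\in\fr{s}$, the $\operatorname{ad}_{\fr{s}}$-equivariance of the metric endomorphism $A$ (recalled in subsection \ref{InvMet}) gives $[X,AY] = A[X,Y]$. Since $Y\in\fr{p}\subset\fr{k}$ and $\fr{k}$ is the centralizer of $\fr{s}$ in $\fr{g}$ by \eqref{LieAlgC}, we have $[X,Y]=0$, so $AY$ centralizes $\fr{s}$, i.e. $AY\in\fr{k}$. On the other hand $AY\in\fr{m}$, and the $B$-orthogonal decomposition $\fr{g}=\fr{s}\oplus\fr{p}\oplus\fr{q}$ gives $\fr{k}\cap\fr{m}=\fr{p}$; hence $AY\in\fr{p}$.

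With $A$-invariance of $\fr{p}$ established, Corollary \ref{AlNiInd1} directly supplies everything except bi-invariance: both $A|_{\fr{p}}$ and $A|_{\fr{q}}$ are endomorphisms, $A|_{\fr{p}}$ defines a $K$-invariant g.o. metric on $C_G(S)/S$, and $A|_{\fr{q}}$ defines a $G$-invariant g.o. metric on $G/C_G(S)$. (Here I use the classical fact that the centralizer of a torus in a compact connected Lie group is itself closed and connected, so that $K = C_G(S)$ meets the hypotheses of the corollary.) For the last assertion, note that $S$ is central, hence normal, in $K = C_G(S)$, so $C_G(S)/S$ carries the structure of a compact connected Lie group whose Lie algebra is naturally identified with $\fr{p}$; the $K$-invariant metric induced by $A|_{\fr{p}}$ is precisely the corresponding left-invariant Riemannian metric. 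Since this metric is geodesic orbit, the Alekseevsky--Nikonorov characterization forces it to be bi-invariant.

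I do not expect any genuine obstacle: once the $\operatorname{ad}_{\fr{s}}$-equivariance of $A$ is combined with the centralizer description of $\fr{k}$, the $A$-invariance of $\fr{p}$ is a one-line computation, and every other ingredient is available as a black box from subsection \ref{GOMetrics}, Corollary \ref{AlNiInd1}, and the elementary structure theory of centralizers of tori. The only minor care needed is ensuring the correct identification of the Lie algebra of $C_G(S)/S$ with $\fr{p}$, which is standard since $\fr{s}$ is a $B$-orthogonal complement of $\fr{p}$ inside $\fr{k}$.
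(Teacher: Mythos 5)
Your proposal is correct, and the first two thirds of it coincide with the paper's argument: the $A$-invariance of $\fr{p}$ via $[X,AY]=A[X,Y]=0$ together with $\fr{k}\cap\fr{m}=\fr{p}$, the connectedness of $K=C_G(S)$, and the appeal to Corollary \ref{AlNiInd1} are exactly what the paper does. Where you diverge is the bi-invariance step. The paper does not invoke the black-box statement that left-invariant g.o.\ metrics on compact Lie groups are bi-invariant; instead it argues directly: by Lemma \ref{Bi} it suffices to check $[X,AX]=0$ for $X\in\fr{p}$, and this follows from the geodesic graph relation $[\xi(X)+X,AX]=0$ of Proposition \ref{GOCond} because $AX\in A\fr{p}\subseteq\fr{p}\subset\fr{k}$ centralizes $\fr{s}\ni\xi(X)$, so the term $[\xi(X),AX]$ vanishes. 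Your route is shorter on paper but hides one transfer that you should make explicit: Corollary \ref{AlNiInd1} gives that $\left.A\right|_{\fr{p}}$ is a g.o.\ metric on $K/S$ \emph{with respect to the $K$-action}, whereas the quoted Alekseevsky--Nikonorov characterization concerns a left-invariant metric on the Lie group $K/S$ being g.o.\ with respect to $K/S$ acting on itself by left translations. The transfer is harmless --- since $S$ is central in $K$, the $K$-action on $K/S$ factors through the quotient homomorphism $K\to K/S$, and one-parameter subgroups of $K$ project onto one-parameter subgroups of $K/S$ with the same orbits --- but it is precisely the kind of detail the paper's direct computation with Lemma \ref{Bi} avoids. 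With that sentence added, your proof is complete; the paper's version has the mild advantage of being self-contained at this point (it only uses ingredients already set up in Section \ref{Prel}), while yours makes the conceptual reason for bi-invariance (g.o.\ plus compact group forces bi-invariance) more visible.
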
 

\begin{proof} Firstly, the group $K:=C_G(S)$ is connected (\cite{Hel} p. 287).  Then the first part of Proposition \ref{Ind} will follow from Corollary \ref{AlNiInd1} if we show that $\fr{p}$ is $A$-invariant.  To this end, relation \eqref{LieAlgC} and the definition of $\fr{p}$ imply that $\fr{p}=\fr{k}\cap \fr{m}=\{Y\in \fr{m}:[Y,X]=0 \ \ \makebox{for all} \ \ X\in \fr{s}\}$. Since $A$ defines a $G$-invariant metric on $G/S$, $A$ is $\operatorname{ad}_{\fr{s}}$-equivariant. Therefore, $[AY,X]=A[Y,X]=0$ for all $Y\in \fr{p}$ and $X\in \fr{s}$.  Hence, $AY\in\ \fr{p}$ for all $Y\in \fr{p}$, which yields the first part of the proposition.

To prove the bi-invariance of $\left.A\right|_{\fr{p}}$, firstly observe that since $S$ is compact, $K$ is also compact, and thus $K/S$ is a compact Lie group.  Moreover, $K/S$ is connected as $K$ is connected.  The metric endomorphism $\left.A\right|_{\fr{p}}$ corresponds to an inner product $\left.B(\left.A\right|_{\fr{p}}\cdot \ , \cdot \ )\right|_{\fr{p}\times \fr{p}}$ on the Lie algebra $\fr{p}$ of $K/S$, and hence it defines a left-invariant metric on $K/S$.  By Lemma \ref{Bi}, it suffices to show that $\left.A\right|_{\fr{p}}$ satisfies the equation $[AX,X]=0$ for all $X\in \fr{p}$.  Since $(G/S,A)$ is a g.o. space, by Proposition \ref{GOCond} there exists a map $\xi:\fr{m}\rightarrow \fr{s}$ such that 
\begin{equation}\label{Tgo}[\xi(X)+X,AX]=0 \ \ \makebox{for all} \ \ X\in \fr{p}.\end{equation}

\noindent But $A\fr{p}\subseteq \fr{p}\subset \fr{k}$.  Hence, $[\xi(X),AX]=0$ which, in view of Equation \eqref{Tgo}, yields $[AX,X]=0$. \end{proof}

 We have proven that any $G$-invariant g.o. metric $A$ on $G/S$ has the form $A=\begin{pmatrix} \left.A\right|_{\fr{p}} & 0\\
0 & \left.A\right|_{\fr{q}}\end{pmatrix}$, where each block defines a g.o. metric on the corresponding spaces $G/K$ and $K/S$, where $K=C_G(S)$. Furthermore, consider the decomposition

\begin{equation*}\fr{g}=\fr{g}_1\oplus \cdots \oplus \fr{g}_k,\end{equation*}

\noindent of $\fr{g}$ into the $B$-orthogonal, Lie-algebra direct sum of its simple ideals $\fr{g}_j$, $j=1,\dots,k$.  Since the algebra $\fr{k}$ has maximal rank in $\fr{g}$, we have $\fr{k}=\fr{k}_1\oplus \cdots \oplus \fr{k}_k$, where $\fr{k}_j=\fr{k}\cap\fr{g}_j$.  Consider the $B$-orthogonal decompositions $\fr{g}_j=\fr{k}_j\oplus \fr{q}_j, \ \ j=1,\dots,k$.  Then the tangent space $\fr{q}$ at the origin of the flag manifold $G/K$ decomposes as $\fr{q}=\fr{q}_1\oplus \cdots \oplus \fr{q}_k$, where $[\fr{q}_i,\fr{q}_j]=\{0\}$ for $i\neq j$, and each space $\fr{q}_j$ is the tangent space of a flag manifold $G_j/K_j$ with $G_j$ simple. The restriction $\left.A\right|_{\fr{q}}$ defines a $G$-invariant g.o. metric on $G/K$, therefore $\left.A\right|_{\fr{q}}$ is $\operatorname{Ad}_{K}$-equivariant, and hence $\operatorname{ad}_{\fr{k}_j}$-equivariant.  Since $K_j$ is connected ($K_j$ is the centralizer of a torus in $G_j$), each restriction $\left.A\right|_{\fr{q}_j}$ is $\operatorname{Ad}_{K_j}$-equivariant and thus $\left.A\right|_{\fr{q}_j}$ is an endomorphism of $\fr{q}_j$, which in turn defines a g.o. metric on the corresponding flag manifold $G_j/K_j$.  In summary, we have the following.

\begin{prop}\label{Red1}
Let $G/S$ be homogeneous space where $G$ is a compact, connected and semisimple Lie group and $S$ is a torus.  Let $K$ be the centralizer of $S$ in $G$, and let $\fr{g},\fr{s},\fr{k}$ be the Lie algebras of $G,S,K$ respectively.  Denote by $B$ the negative of the Killing form of $\fr{g}$ and consider the $B$-orthogonal decomposition $\fr{g}=\fr{s}\oplus \fr{m}$.  Moreover, decompose $\fr{g}=\bigoplus_{j=1}^k\fr{g}_j$ into the sum of its simple ideals $\fr{g}_j$, set $\fr{k}_j:=\fr{k}\cap \fr{g}_j$ and let $\fr{q}_j$ be the $B$-orthogonal complement of $\fr{k}_j$ in $\fr{g}_j$.  Then $\fr{m}$ decomposes as  

 \begin{equation*}\label{Odec1}\fr{m}=\fr{p}\oplus \fr{q}_1\oplus \cdots \oplus \fr{q}_k,\end{equation*}  
 
\noindent where $\fr{p}$ is the $B$-orthogonal complement of $\fr{s}$ in $\fr{k}$ and coincides with the Lie algebra of the compact Lie group $K/S$, while each $\fr{q}_j$ is the $B$-orthogonal complement of $\fr{k}_j$ in $\fr{g}_j$ and coincides with the tangent space at the origin of a generalized flag manifold $G_j/K_j$ with $G_j$ simple. 
  Moreover, any $G$-invariant g.o. metric $A$ on $G/S$ has the form 
  
  \begin{equation*}A=\begin{pmatrix} 
 \left.A\right|_{\fr{p}} & 0 & \cdots &0\\
  0 & \left.A\right|_{\fr{q}_1} & \cdots &0\\
  0&0 &\ddots &0\\
  0& 0& \cdots & \left.A\right|_{\fr{q}_k}
  \end{pmatrix},
\end{equation*}

\noindent where the endomorphism $\left.A\right|_{\fr{p}}$ defines a bi-invariant (g.o.) metric on $K/S$ and each endomorphism $\left.A\right|_{\fr{q}_j}$ defines a $G_j$-invariant g.o. metric on the corresponding flag manifold $G_j/K_j$. 
\end{prop}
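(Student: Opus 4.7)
The plan is to assemble Proposition \ref{Red1} from Proposition \ref{Ind} together with the simple-factor decomposition of $\fr{g}$, using that $K=C_G(S)$ has maximal rank in $G$.

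First, I would apply Proposition \ref{Ind} directly: its hypotheses hold, producing the $A$-invariant splitting $\fr{m}=\fr{p}\oplus\fr{q}$ with $\left.A\right|_\fr{p}$ bi-invariant on the compact connected Lie group $K/S$ and $\left.A\right|_\fr{q}$ a $G$-invariant g.o.\ metric on the flag manifold $G/K$.

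Second, I would establish the refined decomposition of $\fr{q}$. Any maximal torus $T$ of $G$ containing $S$ lies in $C_G(S)=K$, so $\fr{k}$ contains a Cartan subalgebra $\fr{t}=\bigoplus_j\fr{t}_j$ of $\fr{g}$, where each $\fr{t}_j=\fr{t}\cap\fr{g}_j$ is a Cartan subalgebra of $\fr{g}_j$. The standard fact that a maximal-rank subalgebra splits along the simple factors gives $\fr{k}=\bigoplus_j\fr{k}_j$ with $\fr{k}_j=\fr{k}\cap\fr{g}_j$, and taking $B$-orthogonal complements inside each $\fr{g}_j$ yields $\fr{q}=\bigoplus_j\fr{q}_j$. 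Since $[\fr{g}_i,\fr{g}_j]=\{0\}$ for $i\neq j$, we immediately obtain $[\fr{q}_i,\fr{q}_j]=\{0\}$; and the connected subgroup $K_j\subseteq G_j$ with Lie algebra $\fr{k}_j$ is the centralizer in $G_j$ of the projection of $S$, making $G_j/K_j$ a generalized flag manifold of the simple group $G_j$ in the sense of Definition \ref{flagdef}.

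Third, I would prove block-diagonality of $\left.A\right|_\fr{q}$ with respect to $\fr{q}=\bigoplus_j\fr{q}_j$. Since $A$ commutes with $\operatorname{ad}_X$ for every $X\in\fr{t}_j\subseteq\fr{k}$, $A$ preserves each joint weight space of $\operatorname{ad}_{\fr{t}_j}$. The nonzero $\operatorname{ad}_{\fr{t}_j}$-weight spaces in $\fr{g}$ are precisely the root spaces of $\fr{g}_j$, all of which sit inside $\fr{g}_j$, whereas each $\fr{q}_i$ with $i\neq j$ carries a trivial $\fr{t}_j$-action. Hence the sum of nonzero $\operatorname{ad}_{\fr{t}_j}$-weight spaces inside $\fr{q}$ is exactly $\fr{q}_j$, forcing $A\fr{q}_j\subseteq\fr{q}_j$. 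To finish, I would apply Corollary \ref{AlNiInd1} to the g.o.\ space $(G/K,\left.A\right|_\fr{q})$ with the closed connected subgroup $H_j$ of $G$ whose Lie algebra is $\fr{g}_j\oplus\bigoplus_{i\neq j}\fr{k}_i$: this $H_j$ contains $K$, and $\fr{h}_j\cap\fr{q}=\fr{q}_j$ is $A$-invariant, so the corollary yields an $H_j$-invariant g.o.\ metric on $H_j/K\cong G_j/K_j$, which by restriction to $G_j\subseteq H_j$ gives the required $G_j$-invariant g.o.\ structure. The main delicate point is the weight-space argument establishing $A\fr{q}_j\subseteq\fr{q}_j$; everything else is a formal consequence of Proposition \ref{Ind}, Corollary \ref{AlNiInd1}, and the structure theory of maximal-rank subalgebras in compact semisimple Lie algebras.
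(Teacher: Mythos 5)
Your proposal is correct, and its skeleton is the same as the paper's: Proposition \ref{Ind} gives the $A$-invariant splitting $\fr{m}=\fr{p}\oplus\fr{q}$ with $\left.A\right|_{\fr{p}}$ bi-invariant on $K/S$ and $\left.A\right|_{\fr{q}}$ a g.o.\ metric on $G/K$, and the maximal-rank fact $\fr{k}=\bigoplus_j\fr{k}_j$ yields $\fr{q}=\bigoplus_j\fr{q}_j$ with $G_j/K_j$ flag manifolds of simple groups. Where you diverge is the per-factor step: the paper gets $A\fr{q}_j\subseteq\fr{q}_j$ and the g.o.\ property of $\left.A\right|_{\fr{q}_j}$ on $G_j/K_j$ from the $\operatorname{Ad}_K$-equivariance of $\left.A\right|_{\fr{q}}$ (with $K_j$ connected), essentially by projecting the geodesic graph via $\pi_j$, while you prove $A\fr{q}_j\subseteq\fr{q}_j$ by the $\operatorname{ad}_{\fr{t}_j}$-weight-space argument --- a clean, complete justification of a point the paper leaves terse --- and then reapply Corollary \ref{AlNiInd1} to $(G/K,\left.A\right|_{\fr{q}})$ with the auxiliary subgroup $H_j$. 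Both devices work; yours buys a citation-level argument at the cost of introducing $H_j$, whose closedness deserves a word (e.g.\ $H_j=G_j\cdot L$ with $L$ the identity component of $K\cap\prod_{i\neq j}G_i$). The one step you should not leave as ``restriction to $G_j$'' is the passage from an $H_j$-invariant g.o.\ metric on $H_j/K$ to a $G_j$-invariant g.o.\ metric on $G_j/K_j$: a metric that is g.o.\ for a group need not be g.o.\ for a transitive subgroup, so you need the one-line fix that for $X,AX\in\fr{q}_j\subset\fr{g}_j$ the geodesic-graph vector $\xi(X)\in\fr{k}$ may be replaced by $\pi_j(\xi(X))\in\fr{k}_j$, because $\bigoplus_{i\neq j}\fr{k}_i$ commutes with $\fr{g}_j$; this is exactly the paper's implicit mechanism and it closes your argument.
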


\section{Lie theoretic description of flag manifolds and their isotropy algebras}\label{G/K}

In the previous section we showed that an invariant g.o. metric $A$ on $G/S$ induces invariant g.o. metrics on flag manifolds of simple Lie groups.  In this section we will provide a Lie theoretic description of the structure of those flag manifolds and the Lie algebras of their isotropy groups.  The section is concluded with two lemmas which will be useful for further simplifying the g.o. metrics on $G/S$.  For further details on the preliminary results of this section and on the Lie theoretic description of flag manifolds, we refer to \cite{Al}, \cite{AlAr}, \cite{ArChSa}, \cite{BoFoRo}.  To be consistent with the notation of the previous section, assume that $G_j/K_j$ is a generalized flag manifold where $G_j$ is a compact, connected, simple Lie group and $K_j$ is the centralizer of a torus in $G_j$.  Let $\fr{g}_j,\fr{k}_j$ be the Lie algebras of the groups $G_j,K_j$ respectively.

\subsection{Some preliminaries on root systems}\label{root}

  We firstly recall some useful preliminary results on root systems for which we refer to \cite{Hel} and \cite{Hum}. 
  
  Denote by $\fr{g}_j^{\mathbb C}$ the complexification of the algebra $\fr{g}_j$.  Let $R_j\subset (\fr{h}_j^{\mathbb C})^*$ be the \emph{root system} of $\fr{g}_j^{\mathbb C}$ with respect to a Cartan subalgebra $\fr{h}_j^{\mathbb C}$ of $\fr{g}_j^{\mathbb C}$. Let $R_j^+$ be the subset of \emph{positive roots} and let $\Pi_j=\{\alpha_1,\alpha_2,\dots,\alpha_{l_j}\}\subset R_j^+$ be the set of \emph{simple roots} so that each $\alpha\in R_j$ is written as $\alpha=\sum_{i=1}^{l_j}{c_i\alpha_i}$, where either all $c_i$ are non-negative integers (in which case $\alpha\in R_j^+$) or all $c_i$ are non-positive integers.  Consider the root decomposition $\fr{g}_j^{\mathbb C}=\fr{h}_j^{\mathbb C}\oplus \sum_{\alpha\in R_j}{\fr{g}^{\alpha}}$, where 
  
  \begin{equation*}\fr{g}^{\alpha}=\{X\in \fr{g}_j^{\mathbb C}:[H,X]=\alpha(H)X \ \makebox{for all} \ H\in \fr{h}_j^{\mathbb C}\}\end{equation*}
  
  \noindent  are the corresponding (one-dimensional) \emph{root spaces}.  Recall also the relation

\begin{equation}\label{RootBr}
[\fr{g}^{\alpha},\fr{g}^{\beta}]=\left\{ 
\begin{array}{lll}
\fr{g}^{\alpha+\beta},  \quad \makebox{if $\alpha+\beta\in R_j$}, \ \\

\{0\},   \quad \makebox{if $\alpha+\beta\notin R_j$}.
\end{array}
\right.
\end{equation}

We denote by $B_j$ the Killing form of $\fr{g}_j^{\mathbb C}$.  The restriction of $B_j$ to $\fr{g}_j$ is the Killing form of $\fr{g}_j$ (\cite{Hel} p. 180), and we will denote it again by $B_j$. 
 The form $B_j$ is non-degenerate on $\fr{h}_j^{\mathbb C}$, and we define an isomorphism $(\fr{h}_j^{\mathbb C})^*\rightarrow \fr{h}_j^{\mathbb C}$ by assigning to each root $\alpha \in (\fr{h}_j^{\mathbb C})^*$ a unique \emph{root vector} $H_{\alpha}\in \fr{h}_j^{\mathbb C}$ via the equation $B_j(H_{\alpha},H)=\alpha(H) \ \makebox{for all} \ H\in \fr{h}_j^{\mathbb C}$.  Moreover, we have $\alpha(H_{\alpha})\neq 0$ for all $\alpha \in R_j$.  For $\alpha,\beta\in R_j$, we set 

\begin{equation*}\label{prodc}(\alpha,\beta):=B_j(H_{\alpha},H_{\beta}) \ \makebox{and} \  \langle \alpha,\beta \rangle:=2\frac{(\alpha,\beta)}{(\beta,\beta)}.\end{equation*}

\noindent If $\alpha,\beta \in R_j$ with $\alpha \neq \pm \beta$, then there exist non-negative integers $r,q$ such that $\alpha+m\beta\in R_j$ for all integers $m$ satisfying $-r\leq m\leq q$.  Moreover, assume that $r$ is the maximum non-negative integer such that $\alpha-r\beta\in R_j$ and $q$ is the maximum non-negative integer such that $\alpha+q\beta\in R_j$.  Then

\begin{equation*}r-q=\langle \alpha ,\beta\rangle=2\frac{(\alpha,\beta)}{(\beta,\beta)}.\end{equation*}

\noindent  In turn, we have the following.

\begin{lemma}\label{Hum}
Let $\alpha,\beta\in R_j$ with $\alpha \neq \pm \beta$.  Then the following are true:\\
\noindent \emph{(i)} If $\alpha+\beta\notin R_j$ and $\alpha-\beta \notin R_j$ then $(\alpha,\beta)=0$.\\
\noindent\emph{(ii)} If $\alpha+\beta \in R_j$ and $\alpha-\beta \notin R_j$ then $(\alpha,\beta)<0$.\\
\noindent \emph{(iii)} If $(\alpha,\beta)<0$ then $\alpha+\beta\in R_j$.\\
\noindent \emph{(iv)} If $\alpha,\beta$ are simple then $(\alpha,\beta)\leq 0$. 
 \end{lemma}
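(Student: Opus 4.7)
The plan is to deduce all four statements from the $\beta$-string characterization stated immediately before the lemma: for $\alpha\neq\pm\beta$ in $R_j$, there exist unique non-negative integers $r,q$ with $\alpha+m\beta\in R_j$ for all integers $-r\le m\le q$, and
\[
r-q \;=\; \langle\alpha,\beta\rangle \;=\; 2\,\frac{(\alpha,\beta)}{(\beta,\beta)}.
\]
Since $(\beta,\beta)>0$, the sign of $(\alpha,\beta)$ is controlled entirely by the sign of $r-q$, so each of (i)--(iii) will reduce to reading off $r$ and $q$ from the hypothesis.

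For (i), the assumption $\alpha\pm\beta\notin R_j$ forces $r=q=0$, whence $(\alpha,\beta)=0$. For (ii), $\alpha-\beta\notin R_j$ gives $r=0$ and $\alpha+\beta\in R_j$ gives $q\ge 1$; thus $r-q\le -1<0$, so $(\alpha,\beta)<0$. For (iii), the hypothesis $(\alpha,\beta)<0$ forces $r<q$, hence $q\ge 1$, which means $\alpha+\beta\in R_j$ by the definition of $q$.

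For (iv) I would argue by contradiction. Assume $\alpha,\beta\in\Pi_j$ are distinct simple roots with $(\alpha,\beta)>0$. Applying the already-proved (iii) to the pair $(\alpha,-\beta)$ — which still satisfies $\alpha\neq\pm(-\beta)$ and has $(\alpha,-\beta)<0$ — I conclude that $\alpha-\beta\in R_j$. Since $\alpha-\beta$ is expressed in the basis $\Pi_j$ with at least one positive and at least one negative coefficient, this would contradict the defining property of a simple root system that every root has coefficients of one sign. Equivalently, if $\alpha-\beta\in R_j^+$ then $\alpha=(\alpha-\beta)+\beta$ exhibits $\alpha$ as a sum of two positive roots, and symmetrically if $\beta-\alpha\in R_j^+$, which both contradict simplicity. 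Hence $(\alpha,\beta)\le 0$.

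Nothing here is really an obstacle; the only mildly delicate point is (iv), where care is needed to apply (iii) to the correct pair (here $(\alpha,-\beta)$) to get the root $\alpha-\beta$ into the picture, and then to invoke the sign-consistency of coefficients of a root in the simple basis — a standard consequence of the construction of $\Pi_j$ from a choice of positive system.
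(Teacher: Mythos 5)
Your proof is correct: parts (i)--(iii) follow exactly as you say from the root-string relation $r-q=\langle\alpha,\beta\rangle$, and your argument for (iv) — applying (iii) to the pair $(\alpha,-\beta)$ and invoking the sign-consistency of the coefficients of a root in the simple basis — is the standard one. The paper gives no proof of this lemma (it states it as a consequence of the root-string property it has just recalled, citing Humphreys), and your derivation is precisely that intended route, so there is nothing to compare beyond noting the agreement.
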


Finally, we note that since $\fr{g}_j$ is simple and since it can be regarded as the Lie algebra of a compact matrix group, the complexification $\fr{g}_j^{\mathbb C}$ is also simple (\cite{Ha} p. 185). Therefore, the root system $R_j$ is \emph{irreducible} or equivalently, if there exist sets $\Pi_{j_1},\Pi_{j_2}$ such that $\Pi_j=\Pi_{j_1}\cup \Pi_{j_2}$ with $\Pi_{j_1}\cap\Pi_{j_2}=\emptyset$ and $(\Pi_{j_1},\Pi_{j_2})=\{0\}$, then either $\Pi_{j_1}$ or $\Pi_{j_2}$ is empty.

\subsection{Lie theoretic description of flag manifolds}\label{GFM}

We will now focus on the flag manifolds $G_j/K_j$, of the compact simple Lie groups $G_j$, their isotropy algebras $\fr{k}_j$ and the corresponding tangent spaces $\fr{q}_j$, i.e. the orthogonal complements of $\fr{k}_j$ in $\fr{g}_j$ with respect to the Killing form.  Denote by $\fr{k}_j^{\mathbb C}, \fr{q}_j^{\mathbb C}\subset \fr{g}_j^{\mathbb C}$ the complexifications of $\fr{k}_j,\fr{q}_j$ respectively.

  There exists a subset $\Pi_{K_j}$ of $\Pi_j$ such that $\fr{k}_j^{\mathbb C}=\fr{h}_j^{\mathbb C}\oplus \sum_{\alpha \in R_{K_j}}\fr{g}^{\alpha}$, where 
   
   \begin{equation*}R_{K_j}:=\operatorname{span}_{\mathbb Z}(\Pi_{K_j})=\{\alpha\in R_j:\alpha=\sum_{\alpha_i\in \Pi_{K_j}}{c_i\alpha_{i}}\}.\end{equation*}  We set $\Pi_{M_j}:=\Pi_j\setminus \Pi_{K_j}$ and we consider the set $R_{M_j}:=R_j\setminus R_{K_j}$ of the \emph{complementary roots}, so that $\Pi_j=\Pi_{K_j}\cup \Pi_{M_j}$ and $R_j=R_{K_j}\cup R_{M_j}$.  Then  $\fr{q}_j^{\mathbb C}=\sum_{\alpha\in R_{M_j}}{\fr{g}^{\alpha}}$.

\begin{remark}\label{rem1}The flag manifold $G_j/K_j$ is completely determined by the subset $\Pi_{M_j}\neq \emptyset $ of $\Pi_j$. The construction of $G_j/K_j$ from $\Pi_{M_j}$ is visualized in the \emph{painted Dynkin diagram} of $\fr{g}_j^{\mathbb C}$ (see for example \cite {AlAr}, \cite{BoFoRo} or Example \ref{EX2} in the next subsection), where the painted roots correspond to $\Pi_{M_j}$.
\end{remark}

  To obtain the root space decompositions for the real forms $\fr{g}_j,\fr{k}_j,\fr{q}_j$, we firstly make the identification 
 
\begin{equation*}\fr{h}_j=\operatorname{span}_{\mathbb R}\{\sqrt{-1}H_{\alpha}:\alpha\in R_j\}.\end{equation*}

\noindent Consider the sets $R_{K_j}^+=R_{K_j}\cap R_j^+$, $R_{M_j}^+=R_{M_j}\cap R_j^+$, and observe that $\Pi_{K_j}\subset R_{K_j}^+$ and $\Pi_{M_j}\subset R_{M_j}^+$.  The algebra $\fr{g}_j^{\mathbb C}$ admits a \emph{Weyl basis} $\{H_{\alpha},E_{\alpha}:\alpha\in R_j\}$, where $E_{\alpha}\in \fr{g}^{\alpha}$ and $[E_{\alpha},E_{\beta}]=N_{\alpha,\beta}E_{\alpha+\beta}$.  Here, $N_{\alpha,\beta}\neq 0$ if and only if $\alpha+\beta\in R_j$, and $N_{-\alpha,-\beta}=-N_{\alpha,\beta}$. Consider the vectors $A_{\alpha}:=E_{\alpha}-E_{-\alpha}$ and $B_{\alpha}:=\sqrt{-1}(E_{\alpha}+E_{-\alpha})$.  We set
  
  \begin{equation}\label{Ma}\fr{m}^{\alpha}:=\operatorname{span}_{\mathbb R}\{A_{\alpha},B_{\alpha}\}, \ \ \alpha\in R_j^+.\end{equation}  
     
\noindent The real forms $\fr{g}_j,\fr{k}_j,\fr{q}_j$ admit the orthogonal decompositions (with respect to the Killing form)

\begin{equation}\label{koin}\fr{g}_j=\fr{h}_j\oplus \sum_{\alpha\in R_j^+}{\fr{m}^{\alpha}}, \  \ \ \fr{k}_j=\fr{h}_j\oplus \sum_{\alpha\in R_{K_j}^+}{\fr{m}^{\alpha}}\  \ \ \makebox{and} \ \ \ \fr{q}_j=\sum_{\alpha\in R_{M_j}^+}{\fr{m}^{\alpha}}.\end{equation} 
   
 For $X_{\alpha}=cA_{\alpha}+dB_{\alpha} \in \fr{m}^{\alpha}$, set $\bar{X}_{\alpha}:=cB_{\alpha}-dA_{\alpha}\in \fr{m}^{\alpha}$.  Then for any $\sqrt{-1}H_{\beta}\in \fr{h}_j$ we obtain $[\sqrt{-1}H_{\beta},X_{\alpha}]=\alpha(H_{\beta})\bar{X}_{\alpha}$.  Therefore, $[\fr{h}_j,\fr{m}^{\alpha}]\subseteq \fr{m}^{\alpha}$ and in particular

\begin{equation}\label{Xa}
 [Y,X_{\alpha}]=\alpha(Y)\bar{X}_{\alpha}\ \ \makebox{for all}  \ \ Y\in \fr{h}_j, \ X_{\alpha}\in \fr{m}^{\alpha}.
\end{equation}    
 
\noindent Moreover, in view of relation \eqref{RootBr} and the definition of the spaces $\fr{m}^{\alpha}$ we obtain

\begin{equation}\label{ReRootBr}
[\fr{m}^\alpha,\fr{m}^\beta]\subseteq \left\{ \begin{array}{ll}  \fr{m}^{\alpha+\beta}+\fr{m}^{\left|\alpha-\beta \right|}, \quad \mbox{if} \quad \alpha+\beta \in R_j \quad \mbox{or} \quad \alpha-\beta \in R_j, \\
\left\{ {0} \right\}, \quad \mbox{otherwise}.
\end{array}
\right., 
\end{equation}

\noindent More importantly, if $X_{\alpha}\in \fr{m}^{\alpha}\setminus \{0\}$ and $X_{\beta}\in \fr{m}^{\beta}\setminus \{0\}$, such that $\alpha+\beta\in R_j$ or $\alpha-\beta \in R_j$, then $[X_{\alpha},X_{\beta}]\neq 0$. Observe also that if $\alpha\in R_{K_j}$, $\beta \in R_{M_j}$ and $\alpha+\beta \in R_j$, then $\alpha+\beta \in R_{M_j}$.  Hence, in view of decompositions \eqref{koin} and relations \eqref{Xa} and \eqref{ReRootBr}, we obtain $[\fr{k}_j,\fr{q}_j]\subseteq \fr{q}_j$.

 The center $\fr{t}_j$ of $\fr{k}_j$ lies in $\fr{h}_j$ and is given by 

\begin{equation}\label{center}\fr{t}_j=\{X\in \fr{h}_j:\alpha(X)=0 \ \makebox{for all} \ \alpha\in R_{K_j}\}=\{X\in \fr{h}_j:\alpha(X)=0 \ \makebox{for all} \ \alpha\in \Pi_{K_j}\}.\end{equation}    
   
\noindent   We also have the orthogonal decomposition $\fr{h}_j=\fr{t}_j\oplus \fr{t}_j^{\prime}$, with respect to the Killing form, where 

\begin{equation}\label{TPJCC}\fr{t}_j^{\prime}=\operatorname{span}_{\mathbb R}\{\sqrt{-1}H_{\alpha}:\alpha\in \Pi_{K_j}\}.\end{equation}

\noindent  Hence, the isotropy algebra $\fr{k}_j$ of the flag manifold $G_j/K_j$ decomposes as

\begin{equation}\label{IsotAlg}\fr{k}_j=\fr{t}_j\oplus \fr{t}_j^{\prime}\oplus \sum_{\alpha\in R_{K_j}^+}\fr{m}^{\alpha}.\end{equation}

\subsection{The flag manifolds $SO(2l+1)/U(l)$ and $Sp(l)/(U(1)\times Sp(l-1))$}\label{2isot}

Assume that $G_j/K_j$ is one of the flag manifolds $SO(2l+1)/U(l)$ or $Sp(l)/(U(1)\times Sp(l-1))$ which, by Theorem \ref{AlArv}, are the only flag manifolds of simple Lie groups admitting non-standard g.o. metrics.  We will briefly describe the tangent space of $G_j/K_j$ and the $G_j$-invariant g.o. metrics.  We will also provide a simple example for $SO(7)/U(3)$.  To those ends, we take into account results from \cite{AlAr} and \cite{ArCh}.  

  For both manifolds $G_j/K_j$, we have the decomposition $\fr{q}_j=\fr{q}_{j}^1\oplus \fr{q}_j^2$ of the tangent space at the origin into two $\operatorname{Ad}_{K_j}$-invariant, irreducible and inequivalent submodules $\fr{q}^1_j$ and $\fr{q}^2_j$.  The metric endomorphism $A:\fr{q}_j\rightarrow \fr{q}_j$ of any $G_j$-invariant metric on $G_j/K_j$ is given (up to homothety) by the diagonal form $\begin{pmatrix}
\left.\operatorname{Id}\right|_{\fr{q}_j^1} && 0 \\
 0 && \left.\lambda_j\operatorname{Id}\right|_{\fr{q}_j^2}\end{pmatrix}, \quad \lambda_j>0$.  According to the results in \cite{AlAr}, and more specifically Theorem \ref{AlArv}, all the above metrics on $G_j/K_j$ are g.o. metrics. 

To further describe the submodules $\fr{q}_j^1,\fr{q}_j^2$, recall the subsets $\Pi_{K_j},\Pi_{M_j},R_{K_j},R_{M_j}$ of the root system $R_j$ of $\fr{g}_j^{\mathbb C}$, as given in subsection \ref{GFM}.  For a root $\alpha \in R_j$ and a simple root $\alpha_i\in \Pi_j$, let $c_i(\alpha)$ be the (integer) coefficient of $\alpha$ in $\alpha_i$.  For each manifold $G_j/K_j$, we have $\Pi_{M_j}=\{\alpha_0\}$ where $\alpha_0$ is a simple root such that $\operatorname{max}\{c_0(\alpha):\alpha \in R_j\}=2$.  Then $R_{M_j}=\{\alpha\in R:c_0(\alpha)\neq 0\}$ and $R_{M_j}^+=R^1_{M_j}\cup R_{M_j}^2$, where

\begin{equation*}R_{M_j}^1=\{\alpha \in R_{M_j}^+:c_0(\alpha)=1\}\ \makebox{and} \ R_{M_j}^2=\{\alpha \in R_{M_j}^+:c_0(\alpha)=2\}.\end{equation*}

\noindent Moreover, we have 

\begin{equation} \label{Qj}
\fr{q}_j^1=\sum_{\alpha\in R_{M_j}^1}{\fr{m}^{\alpha}} \ \ \makebox{and} \ \ \fr{q}_j^2=\sum_{\alpha\in R_{M_j}^2}{\fr{m}^{\alpha}},
\end{equation}

\noindent where the subspaces $\fr{m}^{\alpha}$ are given by relation \eqref{Ma}. Observe that if $\beta_1\in R_{M_j}^1$, $\beta_2\in R_{M_j}^2$ and $\beta_1+\beta_2\in R_j$ or $\beta_1-\beta_2\in R_j$, then $\beta_1+\beta_2\in R_{M_j}^1$ or $\beta_1-\beta_2\in R_{M_j}^1$ respectively.  Therefore, relations \eqref{ReRootBr} yield

\begin{equation}\label{BRKT}[\fr{q}_j^1,\fr{q}_j^2]\subseteq \fr{q}_j^1.\end{equation}

\begin{example}\label{EX2}\emph{(The flag manifold $G_j/K_j=SO(7)/U(3)$)}  The root system $R_j=\fr{b}_3$ of $\fr{g}_j^{\mathbb C}=\fr{so}(7)^{\mathbb C}$ is given by $\fr{b}_3=\{\pm e_l \ (1\leq l\leq 3), \ \pm e_l\pm e_m \ (1\leq l\neq m\leq 3, \ \pm \ \makebox{independent}\}$, corresponding to the space $V=\operatorname{span}_{\mathbb R}\{e_1,e_2,e_3\}$ (see for example \cite{Hel} p. 462).  The set of simple roots is $\Pi_j=\{\alpha_1,\alpha_2,\alpha_3\}$ where $\alpha_1=e_1-e_{2}$, $\alpha_2=e_2-e_{3}$ and $\alpha_3=e_3$.  The flag manifold $SO(7)/U(3)$ corresponds to $\Pi_{M_j}=\{\alpha_3\}$, painted black in the Dynkin diagram of $\fr{b}_3$ as follows.

\begin{center}
  \begin{tikzpicture}[scale=0.5]
  \foreach \x in {1,2}
    \draw[xshift=\x cm,thick] (\x cm,0) circle (.3cm) node[below=2mm]{$\alpha_{\x}$};
   \draw[xshift=3 cm,thick,fill=black] (3 cm, 0) circle (.3 cm)  node[below=2mm]{$\alpha_3$};
   \foreach \y in {1.15,1.15}
    \draw[xshift=\y cm,thick] (\y cm,0) -- +(1.4 cm,0);
    \draw[thick] (4.3 cm, .1 cm) -- +(1.4 cm,0);
    \draw[thick] (4.3 cm, -.1 cm) -- +(1.4 cm,0);
     \draw(5.2,0) -- + (120:0.4) ; (5.2,0) -- + (-120:0.4);
     \draw(5.2,0) -- + (240:0.4) ; (5.2,0) -- + (-240:0.4);
  \end{tikzpicture}
\end{center}

\noindent Then $R^+_{K_j}=\{\alpha_1,\alpha_2,\alpha_1+\alpha_2\}$ and $R^+_{M_j}=\{\alpha_3,\alpha_2+\alpha_3,\alpha_2+2\alpha_3,\alpha_1+\alpha_2+\alpha_3,\alpha_1+\alpha_2+2\alpha_3,\alpha_1+2\alpha_2+2\alpha_3\}$.  Moreover,  $R^1_{M_j}=\{\alpha_3,\alpha_2+\alpha_3,\alpha_1+\alpha_2+\alpha_3\}$ and $R^2_{M_j}=\{\alpha_2+2\alpha_3,\alpha_1+\alpha_2+2\alpha_3,\alpha_1+2\alpha_2+2\alpha_3\}$.  Therefore, 

\begin{eqnarray*}\fr{k}_j&=&\fr{u}(3)=\fr{h}_j\oplus (\fr{m}^{\alpha_1}+\fr{m}^{\alpha_2}+\fr{m}^{\alpha_1+\alpha_2}),\\
\fr{q}_j^1&=&\fr{m}^{\alpha_3}+\fr{m}^{\alpha_2+\alpha_3}+\fr{m}^{\alpha_1+\alpha_2+\alpha_3} \ \makebox{and}\\
\fr{q}_j^2&=&\fr{m}^{\alpha_2+2\alpha_3}+\fr{m}^{\alpha_1+\alpha_2+2\alpha_3}+\fr{m}^{\alpha_1+2\alpha_2+2\alpha_3}.
\end{eqnarray*}

\end{example}

\subsection{Two lemmas for flag manifolds.}\label{tech}

We conclude this section with two technical lemmas. The first lemma concerns the flag manifolds $SO(2l+1)/U(l)$ and $Sp(l)/(U(1)\times Sp(l-1))$. The second lemma establishes a general property of roots associated to flag manifolds. These lemmas will be used for simplifying the g.o. metrics on $G/S$ in Section \ref{mainresults}.

\begin{lemma}\label{F1lem}
Let $G_j/K_j$ be one of the flag manifolds $SO(2l+1)/U(l)$ or $Sp(l)/(U(1)\times Sp(l-1))$, and let $\fr{q}_j^1,\fr{q}_j^2$ be given by relations \eqref{Qj}.  Then the following are true:\\

\noindent \emph{(i)} $[\fr{q}_j^1,\fr{q}_j^2]\neq \{0\}$.\\
\noindent \emph{(ii)}  For any non-zero vector $\xi$ in the center $\fr{t}_j$ of $\fr{k}_j$ and for any non-zero vector $X_2$ in $\fr{q}_j^2$, $[\xi,X_2]$ is a non-zero vector in $\fr{q}_j^2$.
\end{lemma}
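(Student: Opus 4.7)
For part (i), my approach is to exhibit roots $\alpha\in R_{M_j}^1$ and $\beta\in R_{M_j}^2$ with $\beta-\alpha\in R_j$, which by the non-vanishing remark following \eqref{ReRootBr} produces a non-zero element $[X_\alpha,X_\beta]\in[\fr{q}_j^1,\fr{q}_j^2]$. Since the two flag manifolds in question have painted Dynkin diagrams with $\Pi_{M_j}$ equal to a single simple root $\alpha_0$, I can read off an explicit witness from the ambient root system: in the $B_l$-case take $\alpha=e_l\in R_{M_j}^1$ (which is $\alpha_0$ itself) and $\beta=e_1+e_l\in R_{M_j}^2$, so that $\beta-\alpha=e_1\in R_j$; in the $C_l$-case take $\alpha=e_1-e_2\in R_{M_j}^1$ and $\beta=2e_1\in R_{M_j}^2$, so that $\beta-\alpha=e_1+e_2\in R_j$. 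In both cases the bracket $[\fr{m}^\alpha,\fr{m}^\beta]$ is non-zero and contained in $[\fr{q}_j^1,\fr{q}_j^2]$.

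For part (ii), the first assertion $[\xi,X_2]\in\fr{q}_j^2$ is immediate from $\xi\in\fr{t}_j\subset\fr{k}_j$ together with the $\operatorname{ad}_{\fr{k}_j}$-invariance of $\fr{q}_j^2$, noted just before the relation $[\fr{k}_j,\fr{q}_j]\subseteq\fr{q}_j$ derived after \eqref{ReRootBr}. To establish non-vanishing I would decompose $X_2=\sum_{\alpha\in R_{M_j}^2}X_\alpha$ with $X_\alpha\in\fr{m}^\alpha$ and, using $\xi\in\fr{h}_j$, apply formula \eqref{Xa} summand-by-summand to get $[\xi,X_2]=\sum_{\alpha\in R_{M_j}^2}\alpha(\xi)\bar{X}_\alpha$. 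The map $X_\alpha\mapsto\bar{X}_\alpha$ is a linear isomorphism of $\fr{m}^\alpha$ (from $cA_\alpha+dB_\alpha\mapsto cB_\alpha-dA_\alpha$), and the spaces $\fr{m}^\alpha$ are mutually linearly independent, so the sum vanishes only if $\alpha(\xi)=0$ for every $\alpha\in R_{M_j}^2$ with $X_\alpha\neq 0$. Hence it suffices to prove the key claim that $\alpha(\xi)\neq 0$ for every $\alpha\in R_{M_j}^2$ and every non-zero $\xi\in\fr{t}_j$.

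This key claim is the one place where the specific structure of the two flag manifolds enters, and is the step I expect to be the main technical point. The crucial feature is that $\Pi_{M_j}=\{\alpha_0\}$ is a singleton, so by \eqref{center} the center $\fr{t}_j$ is the common kernel of the simple-root functionals in $\Pi_{K_j}$, of dimension $|\Pi_j|-|\Pi_{K_j}|=1$. For $\xi\in\fr{t}_j$ and $\alpha\in R_{M_j}^2$, expanding $\alpha=2\alpha_0+\sum_{\gamma\in\Pi_{K_j}}c_\gamma\gamma$ yields $\alpha(\xi)=2\alpha_0(\xi)$. If $\alpha_0(\xi)$ were $0$, then every simple root would vanish on $\xi$, and since $\Pi_j$ is a basis of $(\fr{h}_j^{\mathbb{C}})^*$ this would force $\xi=0$, contradicting $\xi\neq 0$. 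Thus $\alpha(\xi)=2\alpha_0(\xi)\neq 0$, completing the argument. The only subtlety to watch is that evaluation of $\alpha$ on the real form $\fr{h}_j=\operatorname{span}_{\mathbb{R}}\{\sqrt{-1}H_\beta\}$ is compatible with the complex-linear functional description of roots, which is routine from the identifications in subsection \ref{GFM}.
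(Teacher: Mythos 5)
Your proof is correct, and your part (ii) is essentially the paper's argument: you expand $[\xi,X_2]=\sum_{\alpha\in R^2_{M_j}}\alpha(\xi)\bar{X}_{\alpha}$ via \eqref{Xa}, kill the $\Pi_{K_j}$-contributions using \eqref{center}, and reduce everything to $\alpha_0(\xi)$; the paper runs the same computation in contrapositive form, concluding $\xi=0$ from the simplicity (centerlessness) of $\fr{g}_j$ rather than from $\Pi_j$ being a basis of $(\fr{h}_j^{\mathbb C})^*$ -- a cosmetic difference, and your phrasing even yields the slightly sharper fact that $\alpha(\xi)=2\alpha_0(\xi)\neq 0$ for every $\alpha\in R^2_{M_j}$ and every nonzero $\xi\in\fr{t}_j$. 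Where you genuinely diverge is part (i). The paper argues uniformly: it writes an arbitrary $\widetilde{\alpha}\in R^2_{M_j}$ as an ordered sum of simple roots all of whose partial sums are roots, truncates at the second occurrence of $\alpha_0$ to produce $\gamma\in R^2_{M_j}$ with $\gamma-\alpha_0\in R^1_{M_j}$, and then applies the non-vanishing remark following \eqref{ReRootBr} to the pair $(\gamma,\alpha_0)$. You instead exhibit explicit witnesses in the $B_l$ and $C_l$ root systems ($\alpha=e_l$, $\beta=e_1+e_l$, respectively $\alpha=e_1-e_2$, $\beta=2e_1$), which do satisfy $\alpha\in R^1_{M_j}$, $\beta\in R^2_{M_j}$, $\beta-\alpha\in R_j$, so the same remark applies. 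Your route is quicker to check but requires the extra input of the painted Dynkin diagrams (that $\alpha_0=e_l$ for $SO(2l+1)/U(l)$ and $\alpha_0=e_1-e_2$ for $Sp(l)/(U(1)\times Sp(l-1))$) and a per-case verification (implicitly $l\geq 2$, which is anyway the only case with two summands); the paper's argument buys generality, since it works for any flag manifold with $\Pi_{M_j}=\{\alpha_0\}$ and $R^2_{M_j}\neq\emptyset$ without consulting the explicit root data.
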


\begin{proof}
Each positive root can be written in the form $\alpha_{i_1}+\cdots +\alpha_{i_{n}}$ where $\alpha_{i_m}$, $m=1,\dots,n$, are simple, not necessarily distinct roots such that each partial sum $\alpha_{i_1}+\cdots +\alpha_{i_s}$, $s\leq n$, is a root (\cite{Hum} p. 50).  For part (i), recall that $\Pi_{M_j}=\{\alpha_0\}$ and recall the definitions of $R_{M_j}^1,R_{M_j}^2$ in subsection \ref{2isot}.  Choose a root $\widetilde{\alpha} \in R_{M_j}^2$.  By the definition of $R_{M_j}^2$, $\widetilde{\alpha}$ has the form 
 
 \begin{equation}\label{2a0}\widetilde{\alpha}=\sum_{\alpha_i\in \Pi_{K_j}}{c_i\alpha_i}+2\alpha_0.\end{equation}
 
 \noindent  Write $\widetilde{\alpha}$ in the form $\alpha_{i_1}+\cdots +\alpha_{i_{n}}$ such that each partial sum $\alpha_{i_1}+\cdots +\alpha_{i_{s}}$, $s\leq n$, is a root, and let $n_0$ be the second index among $1,\dots,n$ such that $\alpha_{i_{n_0}}=\alpha_0$.  Set
 
 \begin{equation*}\beta:=\alpha_{i_1}+\cdots +\alpha_{i_{n_0-1}} \ \ \makebox{and} \ \ \gamma:=\alpha_{i_1}+\cdots +\alpha_{i_{n_0}}.\end{equation*}
 
 \noindent The definition of $n_0$ implies that $\beta \in R_{M_j}^1$, $\gamma \in R_{M_j}^2$ and $\gamma-\alpha_0=\beta$.  Let $X_{\gamma}\in \fr{m}^{\gamma}\setminus \{0\}\subset \fr{q}_j^2\setminus \{0\}$ and $X_{\alpha_0}\in \fr{m}^{\alpha_0}\setminus \{0\}\subset \fr{q}_j^1\setminus \{0\}$. Then by relation \eqref{ReRootBr} and the short discussion following it, we conclude that $[X_{\gamma},X_{\alpha_0}]\neq 0$ which yields the desired result.\\

For part (ii), let $X_2\in \fr{q}_j^2\setminus \{0\}$, and let $\xi\in \fr{t}_j$ such that $[\xi,X_2]=0$.  We will prove that $\xi=0$.  Write $X_2=\sum_{\alpha\in R_{M_j}^2}{X_{\alpha}}$, where $X_{\alpha}\in \fr{m}^{\alpha}$, and choose a root $\widetilde{\alpha}\in R_{M_j}^2$ such that $X_{\widetilde{\alpha}}\neq 0$ (such a root exists since $X_2\neq 0$).  Our hypothesis that $[\xi,X_2]=0$, the fact that $\xi\in \fr{h}_j$, Equation \eqref{Xa} and the linear independence of the vectors $X_{\alpha}$ in the expression $X_2=\sum_{\alpha\in R_{M_j}^2}{X_{\alpha}}$ imply that 

\begin{equation}\label{inview}\widetilde{\alpha}(\xi)=0.\end{equation}

By the definition of $R_{M_j}^2$, $\widetilde{\alpha}$ has the form \eqref{2a0}.  On the other hand, the definition \eqref{center} of $\fr{t}_j$ implies that $\alpha_i(\xi)=0$ for all $\alpha_i\in \Pi_{K_j}$, and in particular $(\sum_{\alpha_i\in \Pi_{K_j}}{c_i\alpha_i})(\xi)=0$.  Hence, Equation \eqref{inview} yields $\alpha_0(\xi)=0$.  Since $\Pi_{M_j}=\{\alpha_0\}$, we deduce that $\alpha(\xi)=0$ for all $\alpha\in R_j=\operatorname{span}_{\mathbb Z}(\Pi_{K_j}\cup\Pi_{M_j})$.  Then in view of decomposition \eqref{koin} for $\fr{g}_j$ we conclude that $\xi$ lies in the center of $\fr{g}_j$.  But $\fr{g}_j$ is simple and hence $\xi=0$.\end{proof}

Before we proceed to the second lemma, we recall the sets $R_{K_j},R_{M_j}\subset R_j$ and $\Pi_{K_j},\Pi_{M_j}\subset \Pi_j$, corresponding to a flag manifold $G_j/K_j$, as defined in subsection \ref{GFM}. Recall also that $R_j$ is irreducible and that $\Pi_{M_j}\neq \emptyset$ and $R_{M_j}\neq \emptyset$.

\begin{lemma}\label{SumExis}Assume that $\Pi_{K_j}\neq \emptyset$.  Then for any root $\alpha\in R_{K_j}^+$ there exists a root $\beta\in R_{M_j}$ such that $\alpha+\beta\in R_{M_j}$.
\end{lemma}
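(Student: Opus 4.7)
The plan is a proof by contradiction that reduces the statement to a linear-algebra fact about the root system. First I would note that $R_{K_j}=\operatorname{span}_{\mathbb Z}(\Pi_{K_j})\cap R_j$ is closed under negation, hence so is $R_{M_j}$; moreover, for any $\beta\in R_{M_j}$, if $\alpha+\beta$ is a root at all then it must lie in $R_{M_j}$, since its coefficient on every simple root of $\Pi_{M_j}$ equals that of $\beta$ and is therefore nonzero. So assuming that no required $\beta$ exists amounts to $\alpha+\beta\notin R_j$ for all $\beta\in R_{M_j}$; replacing $\beta$ by $-\beta$ gives $\alpha-\beta\notin R_j$ as well. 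Lemma \ref{Hum}(i) then forces $(\alpha,\beta)=0$ for every $\beta\in R_{M_j}$, i.e.\ $\alpha\perp \operatorname{span}_{\mathbb R}(R_{M_j})$.

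The key step is then to prove $\operatorname{span}_{\mathbb R}(R_{M_j})=\operatorname{span}_{\mathbb R}(\Pi_j)$: once this is established, $\alpha\in\operatorname{span}_{\mathbb R}(\Pi_j)$ and positive-definiteness of $(\cdot,\cdot)$ on that space will give $(\alpha,\alpha)=0$, contradicting the fact that $\alpha$ is a root. Since $\Pi_{M_j}\subset R_{M_j}$, it suffices to place every $\alpha_i\in\Pi_{K_j}$ in $\operatorname{span}_{\mathbb R}(R_{M_j})$. Here I would invoke irreducibility of $R_j$: the Dynkin diagram of $\Pi_j$ is connected, so I can choose a sequence of simple roots $\gamma_0,\gamma_1,\dots,\gamma_d$ with $\gamma_0\in\Pi_{M_j}$, $\gamma_d=\alpha_i$, and $(\gamma_k,\gamma_{k+1})<0$ for $0\le k\le d-1$. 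By induction on $k$, each partial sum $\sigma_k:=\gamma_0+\cdots+\gamma_k$ is a root: assuming $\sigma_k\in R_j$, Lemma \ref{Hum}(iv) yields
\begin{equation*}
(\sigma_k,\gamma_{k+1})=\sum_{j=0}^{k}(\gamma_j,\gamma_{k+1})\le (\gamma_k,\gamma_{k+1})<0,
\end{equation*}
so Lemma \ref{Hum}(iii) produces $\sigma_{k+1}=\sigma_k+\gamma_{k+1}\in R_j$. Since each $\sigma_{k+1}$ has coefficient at least $1$ on $\gamma_0\in\Pi_{M_j}$, it lies in $R_{M_j}$. Finally $\alpha_i=\sigma_d-\sigma_{d-1}\in\operatorname{span}_{\mathbb R}(R_{M_j})$, and the claim follows.

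The main obstacle is the inductive construction of roots along the Dynkin path: at each step one needs the inner product with the next simple root to be strictly negative so that Lemma \ref{Hum}(iii) can be applied. This is exactly where irreducibility is essential, both to supply a connecting path from $\Pi_{M_j}$ to $\alpha_i$ and, through the pairwise nonpositivity of distinct simple roots in Lemma \ref{Hum}(iv), to keep the accumulating sum $(\sigma_k,\gamma_{k+1})$ strictly negative. Everything else is a formal translation of the assumed failure into an orthogonality relation and a single appeal to positive definiteness of $(\cdot,\cdot)$.
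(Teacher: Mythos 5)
Your proof is correct, but it follows a genuinely different route from the paper. The paper reduces the claim to producing some $\beta\in R_{M_j}$ with $\alpha+\beta\in R_j$ and then runs a double induction on the number $n(\alpha)$ of zero coefficients of $\alpha$ over $\Pi_{K_j}$; its base case is essentially your orthogonality argument, but its inductive step is a rather delicate root-string manipulation (choosing $\alpha_{i_0}\in\Pi_{K_j}^2$, taking the maximal $L$ with $\beta+L(\alpha+\alpha_{i_0})\in R_j$, and applying Lemma \ref{Hum}(ii)--(iii)). You instead convert the assumed failure into the single statement $(\alpha,R_{M_j})=\{0\}$ and then show $\operatorname{span}_{\mathbb R}(R_{M_j})=\operatorname{span}_{\mathbb R}(\Pi_j)$ by building a chain of roots $\sigma_k=\gamma_0+\cdots+\gamma_k\in R_{M_j}$ along a connecting path in the Dynkin diagram, so that $\alpha$ would be orthogonal to a spanning set and hence isotropic, contradicting $(\alpha,\alpha)\neq 0$. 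This is shorter and conceptually cleaner (the failure is impossible because $R_{M_j}$ already spans the whole root space), while the paper's argument stays entirely inside the root combinatorics and never invokes the span/definiteness dichotomy. Two small points to make explicit: choose the Dynkin path without repeated vertices (e.g.\ a shortest path), since your estimate $(\sigma_k,\gamma_{k+1})\le(\gamma_k,\gamma_{k+1})$ uses Lemma \ref{Hum}(iv) for every pair $(\gamma_j,\gamma_{k+1})$ with $j\le k$, which requires these simple roots to be distinct; and for the final contradiction you can avoid appealing to positive definiteness of the form on the real span (not stated in the paper, though standard) by writing $\alpha$ as a real combination of elements of $R_{M_j}$ and using bilinearity to get $(\alpha,\alpha)=0$, contradicting $\alpha(H_\alpha)\neq 0$, which the paper does record.
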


\begin{proof} Firstly, observe that if $\alpha\in R_{K_j}^+$ and $\beta \in \Pi_{M_j}$ then $\alpha-\beta\notin R_j$, otherwise, the difference $\alpha-\beta$ would have a positive coefficient in at least one of the simple roots in $\Pi_{K_j}$ and a negative coefficient in the simple root $\beta$ which is impossible.  Secondly, if $\alpha\in R_{K_j}^+$, $\beta \in R_{M_j}$ and $\alpha+\beta\in R_j$, then $\alpha+\beta\in R_{M_j}$. Therefore, under the hypotheses of the lemma, it suffices to show that $\alpha+\beta \in R_j$.

Assume that $\Pi_{K_j}=\{\alpha_1,\dots,\alpha_s\}$.  For $\alpha\in R_{K_j}^+$, write 

\begin{equation}\label{Alp}\alpha=\sum_{i=1}^s{c_i\alpha_i} \ \ \makebox{where} \ \ c_i\geq 0 \ \ \makebox{and} \ \ \alpha_i \in \Pi_{K_j}.\end{equation}

\noindent Let $n=n(\alpha)$ be the number of zero coefficients $c_i$ in Equation \eqref{Alp}.  Then $0\leq n(\alpha)\leq s-1$. The proof of the lemma will be concluded if we show that for each $n\in \mathbb N$ with $0\leq n \leq s-1$ and for each $\alpha\in R_{K_j}^+$ with $n(\alpha)=n$ there exists a root $\beta \in R_{M_j}$ such that $\alpha+\beta\in R_j$.  To this end, we will use induction on $n$.

For $n=0$, assume on the contrary that there exists a root $\alpha\in R_{K_j}^+$, with $n(\alpha)=0$, such that $\alpha+\beta\notin R_j$ for all roots $\beta\in R_{M_j}$.  Then $\alpha+\beta\notin R_j$ for all roots $\beta \in \Pi_{M_j}$.  By taking also into account that $\alpha-\beta\notin R_j$ for all $\beta \in \Pi_{M_j}$, part (i) of Lemma \ref{Hum} implies that $(\alpha,\beta)=0$ for all $\beta\in \Pi_{M_j}$.  Therefore, Equation \eqref{Alp} yields 

\begin{equation}\label{Orth}0=(\alpha,\beta)=\sum_{i=1}^{s}c_i(\alpha_{i},\beta) \ \ \makebox{for all} \ \ \beta \in \Pi_{M_j}.\end{equation} 

\noindent On the other hand, part (iv) of lemma \ref{Hum} implies that $(\alpha_{i},\beta)\leq 0$, and given that $c_i>0$ for all $i=1,\dots,s$, we conclude from Equation \eqref{Orth} that $(\alpha_{i},\beta)=0$ for all $\alpha_{i}\in \Pi_{K_j}$ and $\beta \in \Pi_{M_j}$.  But this is equivalent to $(\Pi_{K_j},\Pi_{M_j})=\{0\}$ which, in view of the facts that $\Pi_{K_j},\Pi_{M_j}\neq \emptyset$ and $\Pi_{K_j}\cup \Pi_{M_j}=\Pi_j$, contradicts the irreducibility of $R_j$.  This completes the induction step for $n=0$.

Assume that the induction hypothesis holds for all $\alpha\in R_{K_j}^+$ with $n(\alpha)=N$, and let $\alpha\in R_{K_j}^+$ with $n(\alpha)=N+1$.  Then by the definition of $n(\alpha)$ and in view of Equation \eqref{Alp}, we obtain 

\begin{equation}\label{IndForm}\alpha=\sum_{m=1}^{s-(N+1)}{c_{i_m}\alpha_{i_m}}, \ \ \makebox{where} \ \ c_{i_m}>0 \ \ \makebox{and} \ \  \alpha_{i_m}\in \Pi_{K_j}.\end{equation}

\noindent  Set 

\begin{equation*}\Pi_{K_j}^1:=\{\alpha_{i_1},\dots ,\alpha_{i_{s-(N+1)}}\}\subset \Pi_{K_j},\ \ \Pi_{K_j}^2:=\Pi_{K_j}\setminus \Pi_{K_j}^1\ \  \makebox{and} \ \ R_{K_j}^1:=\operatorname{span}_{\mathbb Z}(\Pi_{K_j}^1).
\end{equation*}

\noindent  Then $\alpha\in R_{K_j}^1$, $\Pi_{K_j}=\Pi_{K_j}^1\cup \Pi_{K_j}^2$ and $\Pi_j=\Pi_{K_j}^1\cup\Pi_{K_j}^2\cup \Pi_{M_j}$.  Moreover, the sets $\Pi_{K_j}^1,\Pi_{K_j}^2$ and $\Pi_{M_j}$ are pairwise disjoint.  To conclude the induction for $n(\alpha)=N+1$, once more assume the following.
 
 \begin{assumption}\label{Asum} For all roots $\beta\in R_{M_j}$, $\alpha+\beta\notin R_j$.
\end{assumption}
 
 \noindent  We will show that this assumption leads to a contradiction, which will conclude the induction and the proof.  If Assumption \ref{Asum} is true, then by using the same argument as in the step for $n=0$, we deduce that 

\begin{equation}\label{Con}(\Pi_{K_j}^1,\Pi_{M_j})=\{0\}.\end{equation}
 
\noindent Moreover, there exists a root $\alpha_{i_0}\in \Pi_{K_j}^2$ such that $\alpha+\alpha_{i_0}\in R_j$; Otherwise, in view of expression \eqref{IndForm}, the same argument as in the step for $n=0$ would imply that $(\Pi_{K_j}^1,\Pi_{K_j}^2)=\{0\}$ which, along with Equation \eqref{Con}, would yield $(\Pi_{K_j}^1,\Pi_{K_j}^2\cup \Pi_{M_j})=\{0\}$ and thus would contradict the irreducibility of $R_j=\operatorname{span}_{\mathbb Z}\big(\Pi_{K_j}^1\cup\Pi_{K_j}^2\cup \Pi_{M_j}\big)$.  Since $\alpha+\alpha_{i_0}\in R_j$, Equation \eqref{IndForm} yields 

\begin{equation*}\alpha+\alpha_{i_0}=\sum_{m=1}^{s-(N+1)}{c_{i_m}\alpha_{i_m}}+\alpha_{i_0} \  \in R_{K_j}^+, \ \ \makebox{where} \ \ c_{i_m}>0.\end{equation*}

\noindent Hence $n(\alpha+\alpha_{i_0})=N$.  By the induction hypothesis, there exists a root $\beta \in R_{M_j}$ such that
 
 \begin{equation}\label{S2}\alpha+\alpha_{i_0}+\beta\in R_j.\end{equation}

\noindent Let $L$ be the largest non-negative integer such that $\beta+L(\alpha+\alpha_{i_0})\in R_j$.  Relation \eqref{S2} implies that $L\geq 1$.  We also note that since $\beta \neq \pm(\alpha+\alpha_{i_0})$, we have $\beta+k(\alpha+\alpha_{i_0})\in R_{M_j}$ for all $k$ with $0\leq k\leq L$.  We set $\beta_L:=-(\beta+L(\alpha+\alpha_{i_0}))\in R_{M_j}$.  By the definition of $L$, $\beta_L$ satisfies both the following properties:
 
\begin{equation*}\makebox{a)} \ \alpha+(\alpha_{i_0}+\beta_L)\in R_{M_j}\ \ \makebox{and} \ \ \makebox{b)}\ \alpha+(\alpha_{i_0}-\beta_L)\notin R_{M_j}.\end{equation*}
 
\noindent Taking into account the above properties and the fact that $\alpha+\alpha_{i_0}\in R_j$, part (ii) of Lemma \ref{Hum} implies that $(\alpha+\alpha_{i_0},\beta_L)<0$.  But relation \eqref{Con} implies that $(\alpha,\beta_L)=0$ and thus $(\alpha_{i_0},\beta_L)<0$.  Part (iii) of Lemma \ref{Hum} then yields $\alpha_{i_0}+\beta_L\in R_j$, and in fact $\alpha_{i_0}+\beta_L\in R_{M_j}$.  Then by property a), Assumption \ref{Asum} is contradicted and the induction is concluded.\end{proof}

\section{Necessary form for g.o. metrics on $G/S$}\label{mainresults}

Let $(G/S,g)$ be a g.o. space with $G$ compact, connected, semisimple and $S$ a torus in $G$.   In this section we obtain a necessary form for $g$.  As in the previous sections, let $\fr{g}=\fr{g}_1\oplus \cdots \oplus \fr{g}_k$ be the Lie algebra of $G$, where $\fr{g}_j$ are its simple ideals.  Let $B$ denote the negative of the Killing form of $\fr{g}$, let $\fr{s}$ be the Lie algebra of the torus $S$ and let $\fr{m}=T_o(G/S)$ be the $B$-orthogonal complement of $\fr{s}$ in $\fr{g}$.  Moreover, let $A:\fr{m}\rightarrow \fr{m}$ be the corresponding metric endomorphism of $g$, satisfying $g(X,Y)_o=B(AX,Y)$, $X,Y\in \fr{m}$.  

We recall that $\fr{k}=\fr{k}_1\oplus \cdots \oplus \fr{k}_k$ is the Lie algebra of the centralizer $K=C_G(S)$ of $S$ in $G$, where $\fr{k}_j=\fr{k}\cap \fr{g}_j$.  We also recall the $B$-orthogonal complement $\fr{p}$ of $\fr{s}$ in $\fr{k}$, which coincides with the Lie algebra of $K/S$,  and the $B$-orthogonal complements $\fr{q}_j$ of $\fr{k}_j$ in $\fr{g}_j$, each of which coincides with the tangent space at the origin of a flag manifold $G_j/K_j$. 

Let $\fr{h}=\fr{h}_1\oplus \cdots \oplus \fr{h}_k$ be a Cartan subalgebra of $\fr{g}$, where $\fr{h}_j$ are Cartan subalgebras of $\fr{g}_j$.  We recall the center $\fr{t}_j$ of $\fr{k}_j$, given by relation \eqref{center}.  Set 

\begin{equation}\label{Ef}\fr{f}:=\fr{t}_1\oplus \cdots \oplus \fr{t}_k.\end{equation}

\noindent Finally, let $\pi_j:\fr{g}\rightarrow \fr{g}_j$, $j=1,\dots,k$, be the linear projections of $\fr{g}$ on $\fr{g}_j$, which are Lie algebra homomorphisms.  We have the following.

\begin{lemma}\label{LIC}
The projection $\pi_j(\fr{s})$ lies in the center $\fr{t}_j$ of $\fr{k}_j$, $j=1,\dots ,k$. 
\end{lemma}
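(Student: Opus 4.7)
The plan is to exploit two standard facts: first that $\fr{s}\subseteq \fr{k}$ because $S$ is abelian (so $\fr{s}$ centralizes itself), and second that the simple ideals of a semisimple Lie algebra commute pairwise, i.e. $[\fr{g}_i,\fr{g}_l]=\{0\}$ for $i\neq l$. The projections $\pi_j$ are then simply the direct-summand projections with respect to $\fr{g}=\bigoplus_l\fr{g}_l$, and they are Lie algebra homomorphisms.

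First I would observe that $\pi_j(\fr{s})\subseteq \fr{k}_j$. Since $\fr{s}\subseteq \fr{k}$ and $\fr{k}=\bigoplus_l\fr{k}_l$ with $\fr{k}_l=\fr{k}\cap\fr{g}_l\subseteq \fr{g}_l$, the decomposition of any $X\in\fr{s}$ into its $\fr{g}_l$-components coincides with its decomposition into $\fr{k}_l$-components; in particular $\pi_j(X)\in \fr{k}_j$.

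Next I would show that $\pi_j(X)$ commutes with every element of $\fr{k}_j$. Fix $X\in\fr{s}$ and $Y\in\fr{k}_j\subseteq \fr{g}_j$. Writing $X=\sum_i \pi_i(X)$ and using $[\fr{g}_i,\fr{g}_j]=\{0\}$ for $i\neq j$, we get
\begin{equation*}
[X,Y]=\sum_i [\pi_i(X),Y]=[\pi_j(X),Y].
\end{equation*}
But $Y\in\fr{k}_j\subseteq\fr{k}$, and by the defining property \eqref{LieAlgC} of $\fr{k}=C_{\fr{g}}(\fr{s})$ every element of $\fr{k}$ commutes with every element of $\fr{s}$; hence $[X,Y]=0$, and therefore $[\pi_j(X),Y]=0$.

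Since $Y\in\fr{k}_j$ was arbitrary, $\pi_j(X)$ lies in the center of $\fr{k}_j$, which by relation \eqref{center} is exactly $\fr{t}_j$. This gives $\pi_j(\fr{s})\subseteq \fr{t}_j$ as required. There is no real obstacle here; the argument is a direct unwinding of definitions and the pairwise commutativity of the simple ideals.
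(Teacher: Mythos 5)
Your proof is correct and follows essentially the same route as the paper: pairwise commutativity of the simple ideals plus the defining property of the centralizer $\fr{k}$. The only cosmetic difference is in the first step, where you obtain $\pi_j(\fr{s})\subseteq\fr{k}_j$ from $\fr{s}\subseteq\fr{k}$ and the decomposition $\fr{k}=\bigoplus_l\fr{k}_l$ (recalled earlier in the paper), whereas the paper gets it by computing $[\pi_j(\fr{s}),\fr{s}]=\pi_j([\fr{s},\fr{s}])=\{0\}$ using that $\fr{s}$ is abelian; both are valid.
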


\begin{proof}  Taking into account the facts that $\pi_j$ is a homomorphism, $\fr{s}$ is abelian and $[\fr{g}_i,\fr{g}_j]=\{0\}$ for $i\neq j$, we obtain $[\pi_j(\fr{s}),\fr{s}]=[\pi_j(\fr{s}),\pi_j(\fr{s})]=\pi_j([\fr{s},\fr{s}])=\{0\}$.  Hence, $\pi_j(\fr{s})$ centralizes $\fr{s}$ and thus $\pi_j(\fr{s})$ lies in $\fr{k}_j=\fr{k}\cap \fr{g}_j$.  On the other hand, since $\fr{k}$ centralizes $\fr{s}$ we obtain $[\pi_j(\fr{s}),\fr{k}_j]=\pi_j([\fr{s},\fr{k}])=\{0\}$, and thus concluding that $\pi_j(\fr{s})\subseteq \fr{t}_j$.\end{proof}

  From Lemma \ref{LIC} we deduce that $\pi_j(\fr{s})\subseteq \fr{t}_j\subseteq \fr{h}_j$ and $\fr{s}\subseteq \fr{f}\subset \fr{h}$. Consider the $B$-orthogonal decomposition

\begin{equation*}\fr{f}=\fr{s}\oplus \fr{s}^{\prime}.\end{equation*}

\noindent By the definition of $\fr{p}$ and the decomposition \eqref{IsotAlg} of $\fr{k}_j$, we obtain $\fr{p}=\fr{s}^{\prime}\oplus \bigoplus_{j=1}^k\big(\fr{t}_j^{\prime}\oplus \sum_{\alpha\in R_{K_j}^+}\fr{m}^{\alpha}\big)$, where $\fr{t}_j^{\prime}$ is given by relation \eqref{TPJCC}.  Set

\begin{equation}\label{pij}\fr{p}_j:=\fr{t}_j^{\prime}\oplus \sum_{\alpha\in R_{K_j}^+}\fr{m}^{\alpha}.\end{equation}

\noindent In other words, $\fr{p}_j$ is the $B$-orthogonal complement of $\fr{t}_j$ in $\fr{k}_j$.  Then $\fr{p}=\fr{s}^{\prime}\oplus\fr{p}_1\oplus \cdots \oplus \fr{p}_k$. Therefore, 

\begin{equation}\label{DecMM}\fr{m}=\underbrace{\fr{s}^{\prime}\oplus\fr{p}_1\oplus \cdots \oplus \fr{p}_k}_{\fr{p}}\oplus\underbrace{\fr{q}_1\oplus\cdots \oplus \fr{q}_k}_{\fr{q}}.\end{equation}

  The main result of this section is the following.

\begin{theorem}\label{NecForm}
Let $(G/S,A)$ be a Riemannian geodesic orbit space where $G$ is a compact, connected, semisimple Lie group and $S$ is a torus.  Then $A:\fr{m}\rightarrow \fr{m}$ has the form

\begin{equation}\label{NecForm1}A=\begin{pmatrix} 
 \left.A\right|_{\fr{s}^{\prime}} & 0 & \cdots &0\\
  0 & \left.\lambda_1\operatorname{Id}\right|_{\fr{p}_1\oplus\fr{q}_1} & \cdots &0\\
  0&0 &\ddots &0\\
  0& 0& \cdots & \left.\lambda_k\operatorname{Id}\right|_{\fr{p}_k\oplus\fr{q}_k}
  \end{pmatrix}, \ \lambda_j>0.
\end{equation}

\end{theorem}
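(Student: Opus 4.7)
The plan is to refine the block form
$A = \operatorname{diag}(A|_{\fr{p}}, A|_{\fr{q}_1}, \dots, A|_{\fr{q}_k})$
provided by Proposition \ref{Red1}, where $A|_{\fr{p}}$ is bi-invariant on the Lie group $K/S$ and each $A|_{\fr{q}_j}$ is a g.o.\ metric on the simple flag manifold $G_j/K_j$. Applying the (reductive) version of Proposition \ref{DZp} to the bi-invariant metric $A|_{\fr{p}}$ on the reductive algebra $\fr{p} = \fr{s}' \oplus \bigoplus_j\fr{p}_j = \fr{s}'\oplus\bigoplus_{j,l}\fr{p}_j^{(l)}$ (center plus simple ideals $\fr{p}_j^{(l)}$ of each $\fr{p}_j$) already gives that $A$ preserves $\fr{s}'$ and each $\fr{p}_j^{(l)}$, acts on $\fr{s}'$ arbitrarily (positive-definite and $B$-symmetric), and acts on each $\fr{p}_j^{(l)}$ as $\mu_j^{(l)}\operatorname{Id}$. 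It therefore remains to prove (a) $A|_{\fr{q}_j} = \lambda_j\operatorname{Id}$ for some $\lambda_j>0$, and (b) each $\mu_j^{(l)}$ coincides with the same $\lambda_j$; combined, these yield the form \eqref{NecForm1}.

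For (a), Theorem \ref{AlArv} reduces the question to the exceptional case $G_j/K_j \in \{SO(2l+1)/U(l),\; Sp(l)/(U(1)\times Sp(l-1))\}$, where a priori $A|_{\fr{q}_j} = \lambda_j^1\operatorname{Id}|_{\fr{q}_j^1}\oplus\lambda_j^2\operatorname{Id}|_{\fr{q}_j^2}$; I want to force $\lambda_j^1=\lambda_j^2$. For $X = X_1+X_2\in\fr{q}_j^1\oplus\fr{q}_j^2$ I plug into the g.o.\ condition \eqref{GO} of $G/S$, noting that $\xi(X)\in\fr{s}\subseteq\bigoplus_i\fr{t}_i$ (by Lemma \ref{LIC}) brackets trivially with $\fr{g}_i$ for $i\neq j$ and preserves each $\fr{q}_j^i$. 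Projecting $[\xi(X)+X,AX]=0$ onto $\fr{q}_j^2$ (using $[\fr{q}_j^1,\fr{q}_j^2]\subseteq\fr{q}_j^1$ from \eqref{BRKT}) yields $[\pi_j(\xi(X)),X_2]=0$, whence Lemma \ref{F1lem}(ii) forces $\pi_j(\xi(X))=0$ whenever $X_2\neq 0$. The residual $\fr{q}_j^1$-equation then reads $(\lambda_j^2-\lambda_j^1)[X_1,X_2]=0$, and Lemma \ref{F1lem}(i) exhibits $X_1,X_2$ with $[X_1,X_2]\neq 0$, so $\lambda_j^1=\lambda_j^2$.

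For (b), fix a simple ideal $\fr{p}_j^{(l)}$; if $\Pi_{K_j}=\emptyset$ then $\fr{p}_j=\{0\}$ and there is nothing to show, so assume $\Pi_{K_j}\neq\emptyset$. Pick $\alpha\in R_{K_j}^+$ with $\fr{m}^\alpha\subset\fr{p}_j^{(l)}$; Lemma \ref{SumExis} (after a sign flip if needed) supplies $\gamma\in R_{M_j}^+$ with $\alpha+\gamma\in R_{M_j}$. Choose nonzero $X_\alpha\in\fr{m}^\alpha$ and $X_\gamma\in\fr{m}^\gamma\subset\fr{q}_j$; setting $X=X_\alpha+X_\gamma$, part (a) gives $AX = \mu_j^{(l)}X_\alpha + \lambda_jX_\gamma$ and so $[X,AX] = (\lambda_j-\mu_j^{(l)})[X_\alpha,X_\gamma]$. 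Since $\pi_j(\xi(X))\in\fr{t}_j$ and $\alpha|_{\fr{t}_j}=0$, the bracket $[\xi(X),X_\alpha]$ vanishes, so \eqref{GO} becomes
\begin{equation*}
\lambda_j\,\gamma(\xi(X))\,\bar X_\gamma + (\lambda_j-\mu_j^{(l)})\,[X_\alpha,X_\gamma] = 0.
\end{equation*}
The first term lies in $\fr{m}^\gamma$ and the second in $\fr{m}^{\alpha+\gamma}+\fr{m}^{|\alpha-\gamma|}$; these subspaces are independent because $\alpha\neq 0$ and $\alpha\neq 2\gamma$ (coefficient-wise, $\alpha$ uses no $\Pi_{M_j}$-roots whereas $2\gamma$ does). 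Hence both terms must vanish separately, and $[X_\alpha,X_\gamma]\neq 0$ (guaranteed by $\alpha+\gamma\in R_j$ together with \eqref{ReRootBr}) forces $\mu_j^{(l)}=\lambda_j$. Since each simple ideal of $\fr{p}_j$ contains at least one such root space $\fr{m}^\alpha$, this gives $A|_{\fr{p}_j}=\lambda_j\operatorname{Id}$, completing the form \eqref{NecForm1}.

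The principal obstacle is the independence-of-root-spaces step in (b): the computation only produces $\mu_j^{(l)}=\lambda_j$ once the $\bar X_\gamma$ term is separated from the $[X_\alpha,X_\gamma]$ term, which requires the coefficient-counting argument that $\alpha\neq 2\gamma$ because $\alpha$ is supported on $\Pi_{K_j}$ while $\gamma$ has a positive coefficient on some root of $\Pi_{M_j}$. A secondary delicacy, present in both (a) and (b), is the $\fr{g}_j$-localization that lets one replace $\xi(X)\in\fr{s}$ by its projection $\pi_j(\xi(X))\in\fr{t}_j$; this relies on Lemma \ref{LIC} together with the simple-ideal decomposition of $\fr{g}$.
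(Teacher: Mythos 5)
Your proposal is correct, and while the flag-manifold half coincides with the paper's argument, the isotropy half takes a genuinely different route. Your part (a) is essentially the paper's proof of Proposition \ref{SimpFlag}: the same test vector $X=X_1+X_2$, the same projection trick using \eqref{BRKT}, and the same use of Lemma \ref{F1lem}(ii) then (i). For the $\fr{p}$-blocks, however, the paper does not invoke the structure of bi-invariant metrics on the reductive algebra $\fr{p}$; it instead takes an arbitrary eigenvector of $\left.A\right|_{\fr{p}}$ with nonzero $\fr{p}_j$-component, splits into two cases (nonzero root-space part versus pure $\fr{t}_j^{\prime}$ part), and uses Lemma \ref{LinAlg}, Claim \ref{ClaimRoot}, the eigenvalue-merging Lemma \ref{EigenEq}, and in Case II the bi-invariance identity $[X,AX]=0$ together with a root evaluation. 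You replace all of this by first applying the reductive extension of Proposition \ref{DZp} to the $\operatorname{Ad}$-invariant product $B(A\cdot,\cdot)|_{\fr{p}\times\fr{p}}$, which yields at once that $A$ preserves $\fr{s}^{\prime}$ (so Proposition \ref{Endo} comes for free) and acts as a scalar $\mu_j^{(l)}$ on each simple ideal $\fr{p}_j^{(l)}$, and then match $\mu_j^{(l)}=\lambda_j$ by a single explicit bracket computation with $X=X_\alpha+X_\gamma$, where Lemma \ref{SumExis} supplies the complementary root; the separation of the $\fr{m}^{\gamma}$-term from the $\fr{m}^{\alpha+\gamma}+\fr{m}^{|\alpha-\gamma|}$-term via the $\Pi_{M_j}$-coefficient count (ruling out $\alpha=2\gamma$) plays the role that Claim \ref{ClaimRoot} and Lemma \ref{EigenEq} play in the paper, and the Cartan part $\fr{t}_j^{\prime}$ is absorbed into the simple ideals, so the paper's Case II disappears. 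What your route buys is a shorter, more structural argument with no case analysis; what it costs is two facts you should state explicitly: (i) Proposition \ref{DZp} as written is for semisimple algebras, so you need the standard extension that for an $\operatorname{Ad}$-invariant inner product on the compact algebra $\fr{p}$ the center $\fr{s}^{\prime}$ is automatically orthogonal to $[\fr{p},\fr{p}]$ and the restriction to each simple ideal is proportional to the Killing form, plus the observation that $\left.B\right|_{\fr{p}_j^{(l)}\times\fr{p}_j^{(l)}}$ is itself proportional to the Killing form of $\fr{p}_j^{(l)}$, so that proportionality of forms really gives $\left.A\right|_{\fr{p}_j^{(l)}}=\mu_j^{(l)}\operatorname{Id}$; and (ii) the sign flip after Lemma \ref{SumExis}: if the supplied $\beta\in R_{M_j}$ is negative, replace it by $\gamma:=-(\alpha+\beta)\in R_{M_j}^{+}$, for which $\alpha+\gamma=-\beta\in R_{M_j}$. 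With these details spelled out, your proof is complete.
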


\subsection{Proof of Theorem \ref{NecForm}} Our main guide in proving Theorem \ref{NecForm} is Proposition \ref{Red1}, from which we recall the necessary form $A=\begin{pmatrix} 
 \left.A\right|_{\fr{p}} & 0 & \cdots &0\\
  0 & \left.A\right|_{\fr{q}_1} & \cdots &0\\
  0&0 &\ddots &0\\
  0& 0& \cdots & \left.A\right|_{\fr{q}_k}
  \end{pmatrix}$. Taking into account decomposition \eqref{DecMM}, in order to prove Theorem \ref{NecForm}, it suffices to prove the following three results.

 \begin{prop}\label{SimpFlag}
If $A$ is a g.o. metric on $G/S$ then each endomorphism $\left.A\right|_{\fr{q}_j}:\fr{q}_j\rightarrow \fr{q}_j$ is equal to $\lambda_j\operatorname{Id}$ for some $\lambda_j>0$. 
\end{prop}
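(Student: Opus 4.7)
\subsection*{Proof plan for Proposition \ref{SimpFlag}}

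The plan is to reduce to the flag manifold $G_j/K_j$ via Proposition \ref{Red1}, which tells us that $\left.A\right|_{\fr{q}_j}$ is a $G_j$-invariant g.o. metric on $G_j/K_j$ (with $G_j$ simple). For every flag manifold of a simple Lie group other than $SO(2l+1)/U(l)$ or $Sp(l)/(U(1)\times Sp(l-1))$, Theorem \ref{AlArv} says that the only invariant g.o. metrics are the standard ones, so $\left.A\right|_{\fr{q}_j} = \lambda_j \operatorname{Id}$ and we are done. The whole work concentrates on ruling out a non-trivial scaling in the two exceptional cases.

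So assume $G_j/K_j$ is one of the two exceptional flag manifolds. Then $\fr{q}_j=\fr{q}_j^1\oplus\fr{q}_j^2$ and, up to homothety, $\left.A\right|_{\fr{q}_j}$ acts as $\operatorname{Id}$ on $\fr{q}_j^1$ and as $\lambda_j\operatorname{Id}$ on $\fr{q}_j^2$; I must show $\lambda_j=1$. Pick arbitrary non-zero $X_1\in\fr{q}_j^1$, $X_2\in\fr{q}_j^2$, set $X=X_1+X_2$, and apply the geodesic orbit condition of Proposition \ref{GOCond}: there exists $\eta:=\xi(X)\in\fr{s}$ with $[\eta+X,AX]=0$. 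Expanding $AX=X_1+\lambda_j X_2$ gives
\begin{equation*}
[\eta,X_1]+\lambda_j[\eta,X_2]+(\lambda_j-1)[X_1,X_2]=0.
\end{equation*}

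Now I use that $\pi_j\colon\fr{g}\to\fr{g}_j$ is a Lie algebra homomorphism with $\pi_j(\fr{s})\subseteq\fr{t}_j\subseteq\fr{k}_j$ by Lemma \ref{LIC}, hence $[\eta,X_i]=[\pi_j(\eta),X_i]$. Since $\fr{q}_j^1,\fr{q}_j^2$ are $\operatorname{ad}_{\fr{k}_j}$-invariant submodules, $[\pi_j(\eta),X_1]\in\fr{q}_j^1$ and $[\pi_j(\eta),X_2]\in\fr{q}_j^2$. Combined with relation \eqref{BRKT}, namely $[X_1,X_2]\in\fr{q}_j^1$, the identity decomposes cleanly along $\fr{q}_j^1\oplus\fr{q}_j^2$:
\begin{equation*}
[\pi_j(\eta),X_1]+(\lambda_j-1)[X_1,X_2]=0, \qquad \lambda_j[\pi_j(\eta),X_2]=0.
\end{equation*}
The second equation plus part (ii) of Lemma \ref{F1lem} forces $\pi_j(\eta)=0$ (recall $X_2\ne 0$ and $\pi_j(\eta)\in\fr{t}_j$). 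Plugging this into the first equation yields $(\lambda_j-1)[X_1,X_2]=0$. By part (i) of Lemma \ref{F1lem} we can choose $X_1,X_2$ so that $[X_1,X_2]\ne 0$, and we conclude $\lambda_j=1$.

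The main conceptual obstacle is ensuring the clean splitting of the geodesic orbit identity into $\fr{q}_j^1$- and $\fr{q}_j^2$-components; this is exactly where the abelianness of $\fr{s}$ (via Lemma \ref{LIC}, forcing $\pi_j(\fr{s})\subseteq\fr{t}_j\subseteq\fr{k}_j$) and the rigidity of the specific flag manifolds (via Lemma \ref{F1lem}) both come into play. Once that splitting is established, the desired conclusion $\lambda_j=1$ follows with essentially no further computation.
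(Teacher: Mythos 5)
Your proposal is correct and follows essentially the same route as the paper: reduction to the flag manifolds via Proposition \ref{Red1} and Theorem \ref{AlArv}, then in the two exceptional cases the projection $\pi_j$ of the geodesic orbit condition, the splitting along $\fr{q}_j^1\oplus\fr{q}_j^2$ via relation \eqref{BRKT}, and Lemmas \ref{LIC} and \ref{F1lem} to force $\pi_j(\xi(X))=0$ and then $\lambda_j=1$. The only cosmetic difference is that you invoke $\operatorname{ad}_{\fr{k}_j}$-invariance of the submodules where the paper uses $\operatorname{ad}_{\fr{s}}$-invariance, which is an equivalent justification.
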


\begin{prop}\label{SimIsot}
If $A$ is a g.o. metric on $G/S$ then each endomorphism $\left.A\right|_{\fr{p}_j}:\fr{p}_j\rightarrow \fr{p}_j$ is equal to $\lambda_j\operatorname{Id}$, where $\lambda_j$ is the same as in Proposition \ref{SimpFlag}.
\end{prop}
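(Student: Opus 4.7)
The plan is to exploit the bi-invariance of $A|_{\fr{p}}$ on the compact Lie group $K/S$ (Proposition \ref{Ind}), together with Lemmas \ref{EigenEq} and \ref{SumExis}. Since $\fr{s}$ is central in $\fr{k}$, the Lie bracket on $\fr{p}$ is induced from that on $\fr{k}$, and $\fr{p}=\fr{s}'\oplus\fr{p}_1\oplus\cdots\oplus\fr{p}_k$ is the reductive decomposition of $\operatorname{Lie}(K/S)$ into center plus the semisimple ideals $\fr{p}_j$; each $\fr{p}_j$ further decomposes into simple ideals, one for each connected component of the sub-diagram determined by $\Pi_{K_j}$. By the natural extension of Proposition \ref{DZp} to compact reductive Lie algebras, $A|_{\fr{p}_j}$ must act as a positive scalar on each such simple ideal. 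Each root space $\fr{m}^{\alpha}$ with $\alpha\in R_{K_j}^+$ lies in exactly one of these simple ideals, so $A|_{\fr{m}^\alpha}=\mu_\alpha\operatorname{Id}$ for some $\mu_\alpha>0$. It will then suffice to prove $\mu_\alpha=\lambda_j$ for every such $\alpha$, with $\lambda_j$ as in Proposition \ref{SimpFlag}.

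Assume for the main case that $\fr{p}_j\neq\{0\}$ and $\fr{q}_j\neq\{0\}$. Fix $\alpha\in R_{K_j}^+$ and invoke Lemma \ref{SumExis} to obtain $\beta_0\in R_{M_j}$ with $\alpha+\beta_0\in R_{M_j}$. Replacing $\beta_0$ by $-\beta_0$ if necessary, we may choose $\beta\in R_{M_j}^+$ such that either $\alpha+\beta$ or $\alpha-\beta$ belongs to $R_j$. Then by \eqref{ReRootBr} and the non-vanishing statement immediately following it, $[\fr{m}^\alpha,\fr{m}^\beta]$ is nonzero and contained in $\fr{m}^{\alpha+\beta}+\fr{m}^{\left|\alpha-\beta\right|}$. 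Either summand, if present, corresponds to a root with nonzero $\Pi_{M_j}$-coefficient, hence lies in $R_{M_j}^+$ and therefore in $\fr{q}_j$, and is distinct from both $\alpha$ and $\beta$; so $[\fr{m}^\alpha,\fr{m}^\beta]$ has nonzero $B$-orthogonal projection onto $(\fr{m}^\alpha+\fr{m}^\beta)^\perp$. Both $\fr{m}^\alpha$ and $\fr{m}^\beta$ are $\operatorname{ad}_{\fr{s}}$-invariant since $\fr{s}\subseteq\fr{f}\subseteq\fr{h}$ and relation \eqref{Xa} applies. Lemma \ref{EigenEq} with $\fr{m}_1=\fr{m}^\alpha$ (eigenvalue $\mu_\alpha$) and $\fr{m}_2=\fr{m}^\beta$ (eigenvalue $\lambda_j$, by Proposition \ref{SimpFlag}) then forces $\mu_\alpha=\lambda_j$. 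Since every simple ideal of $\fr{p}_j$ contains at least one positive root space $\fr{m}^\alpha$ with $\alpha\in R_{K_j}^+$, we conclude $A|_{\fr{p}_j}=\lambda_j\operatorname{Id}$.

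For the edge cases: if $\fr{p}_j=\{0\}$ the claim is vacuous; if $\fr{q}_j=\{0\}$ and $\fr{p}_j\neq\{0\}$, then $\fr{g}_j=\fr{k}_j$, and the simplicity of $\fr{g}_j$ forces $\fr{t}_j=\{0\}$, so $\fr{p}_j=\fr{g}_j$ is itself simple and bi-invariance of $A|_{\fr{p}_j}$ alone yields $A|_{\fr{p}_j}=\lambda_j\operatorname{Id}$ for some $\lambda_j>0$ (which we adopt as the $\lambda_j$ of the statement, Proposition \ref{SimpFlag} being vacuous on the zero space). The main obstacle is ensuring that, given $\alpha\in R_{K_j}^+$, one can produce a partner root in $R_{M_j}$ so that the bracket actually exits $\fr{m}^\alpha+\fr{m}^\beta$ and lands in $\fr{q}_j$; this is precisely what Lemma \ref{SumExis} delivers, its proof relying on the irreducibility of $R_j$ and a careful induction on the number of zero coefficients of $\alpha$.
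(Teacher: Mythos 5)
Your proposal is correct, but it is organized differently from the paper's proof. The paper never first diagonalizes $\left.A\right|_{\fr{p}}$ along the ideal structure of $\fr{p}$: it takes an arbitrary eigenvector $v$ of $\left.A\right|_{\fr{p}}$ with nonzero $\fr{p}_j$-component, and splits into two cases according to whether the root-space part $\sum_{\alpha\in R_{K_j}^+}X_{\alpha}$ of $v$ is nonzero (where, before applying Lemma \ref{EigenEq} with $\fr{m}_1$ equal to the whole eigenspace $\fr{m}_{\lambda}$, it needs an extra combinatorial claim ruling out cancellations among the brackets $[X_{\alpha'},X_{|\beta|}]$) or $v$ projects into $\fr{t}_j^{\prime}$ only (handled via the bi-invariance identity $[X,AX]=0$ of Lemma \ref{Bi} applied to $X=v+X_{\alpha_0}$). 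You instead front-load the structure of $\operatorname{ad}$-invariant inner products on the compact reductive algebra $\fr{p}$ — the reductive extension of Proposition \ref{DZp} — to conclude that $A$ is scalar on each simple ideal of $\fr{p}$, hence on every $\fr{m}^{\alpha}$, $\alpha\in R_{K_j}^+$, and on the relevant part of $\fr{t}_j^{\prime}$ at once; after that, your combination of Lemma \ref{SumExis}, relation \eqref{ReRootBr} and Lemma \ref{EigenEq} matches the paper's Case I in spirit, but with $\fr{m}_1=\fr{m}^{\alpha}$ rather than a full eigenspace, so no cancellation claim is needed and the paper's Case II disappears; you also treat explicitly the degenerate indices with $\fr{q}_j=\{0\}$, which the paper leaves implicit. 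The trade-off is that the paper works only with facts it has literally proved (Proposition \ref{DZp} is stated for semisimple algebras), whereas you assert the reductive extension and the fact that $A$ preserves each simple ideal; both are standard and quick (the center is $Q$-orthogonal to the derived algebra, distinct simple ideals are mutually $Q$- and $B$-orthogonal, the restriction to a simple ideal is proportional to its Killing form, and then the symmetry of $A$ with respect to $B$ forces $A$ to preserve each ideal), but a complete write-up should include these two or three lines.
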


\begin{prop}\label{Endo}
If $A$ is a g.o. metric on $G/S$ then $\left.A\right|_{\fr{s}^{\prime}}$ is an endomorphism of $\fr{s}^{\prime}$.
\end{prop}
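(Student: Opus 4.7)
The plan is to combine three ingredients already available: Proposition~\ref{Ind}, which guarantees that $A$ restricts to an endomorphism of $\fr{p}$; Proposition~\ref{SimIsot}, which identifies $\left.A\right|_{\fr{p}_j}$ with $\lambda_j\operatorname{Id}$; and the symmetry of $A$ with respect to $B$. Given any $X\in\fr{s}'$, Proposition~\ref{Ind} lets me decompose
\begin{equation*}
AX = X_0 + \sum_{j=1}^k X_j,\qquad X_0\in\fr{s}',\ X_j\in\fr{p}_j,
\end{equation*}
in accordance with $\fr{p}=\fr{s}'\oplus\fr{p}_1\oplus\cdots\oplus\fr{p}_k$. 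The goal reduces to showing that each component $X_j$ vanishes.

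To extract $X_j$ using $B$, I would first record the orthogonality relation $B(\fr{s}',\fr{p}_j)=\{0\}$. This follows from $\fr{s}'\subseteq\fr{f}=\bigoplus_i\fr{t}_i$ (Lemma~\ref{LIC}) together with the observations that $\fr{p}_j\subseteq\fr{g}_j$ is $B$-orthogonal to $\fr{t}_j$ by construction and to each $\fr{t}_i$ with $i\neq j$ because distinct simple ideals of $\fr{g}$ are $B$-orthogonal; the very same argument also yields $B(\fr{p}_i,\fr{p}_j)=\{0\}$ for $i\neq j$. With these orthogonalities in place, the proof reduces to a single scalar identity: for any $Y\in\fr{p}_j$,
\begin{equation*}
B(X_j,Y) = B(AX,Y) = B(X,AY) = \lambda_j B(X,Y) = 0,
\end{equation*}
where the first equality uses the $B$-orthogonalities listed above to kill $X_0$ and the $X_i$ with $i\neq j$, the second uses the $B$-symmetry of $A$, the third uses Proposition~\ref{SimIsot}, and the last uses $\fr{s}'\perp_B\fr{p}_j$. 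Since $B$ is positive definite on $\fr{p}_j$, this forces $X_j=0$ for every $j$, so $AX=X_0\in\fr{s}'$.

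I do not expect a serious obstacle here: the argument is a short linear-algebra computation, and the deeper structural content has already been packaged into Propositions~\ref{Ind} and~\ref{SimIsot}. The only subtlety worth flagging is the two-step verification of $B(\fr{s}',\fr{p}_j)=\{0\}$, which requires decomposing an element of $\fr{s}'$ along the centers $\fr{t}_i$ and treating the indices $i=j$ and $i\neq j$ separately; once this is done, the cross-ideal pieces disappear automatically under $B$ and the rest of the argument is immediate.
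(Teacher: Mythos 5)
Your proof is correct and rests on the same mechanism as the paper's: the $B$-symmetry of $A$ together with the $A$-invariance of the complement of $\fr{s}^{\prime}$ (via Proposition \ref{SimIsot} in your version, Propositions \ref{SimpFlag} and \ref{SimIsot} in the paper's) forces $A\fr{s}^{\prime}\subseteq\fr{s}^{\prime}$. The only cosmetic difference is that you work inside $\fr{p}$ (using Proposition \ref{Ind}) and kill each component against test vectors, whereas the paper argues in one line inside $\fr{m}$ via $B\big(A\fr{s}^{\prime},(\fr{s}^{\prime})^{\bot}\big)=B\big(\fr{s}^{\prime},A(\fr{s}^{\prime})^{\bot}\big)=\{0\}$.
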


\noindent Let us firstly prove a simple linear-algebraic result. 

\begin{lemma}\label{LinAlg}
Let $V$ be a vector space and let $A:V\rightarrow V$ be a diagonalizable endomorphism.  Assume that $W$ is a subspace of $V$ such that any eigenvector of $A$ with non-zero projection on $W$ has eigenvalue $\lambda$.  Then $\left.A\right|_{W}=\lambda \operatorname{Id}$.
\end{lemma}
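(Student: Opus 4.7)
The plan is to exploit that, in the paper's setting, $A$ is symmetric with respect to the fixed inner product $B$, so its eigenspace decomposition $V=\bigoplus_i V_{\lambda_i}$ is $B$-orthogonal and ``projection on $W$'' is the orthogonal projection with respect to $B$. The whole argument is then a few lines of linear algebra.

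First I would rewrite the hypothesis in its contrapositive form: every eigenvector of $A$ whose eigenvalue $\lambda_i$ differs from $\lambda$ has zero projection on $W$, which is the same as saying $V_{\lambda_i}\subseteq W^{\perp}$ for every $\lambda_i\neq\lambda$. Summing over these indices gives the inclusion $\bigoplus_{\lambda_i\neq\lambda}V_{\lambda_i}\subseteq W^{\perp}$, which is the crucial structural fact.

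To finish, I would take any $w\in W$ and decompose it along the $A$-eigenspace decomposition as $w=w_{\lambda}+w'$, with $w_{\lambda}\in V_{\lambda}$ and $w'\in\bigoplus_{\lambda_i\neq\lambda}V_{\lambda_i}\subseteq W^{\perp}$. Then $B(w,w')=0$ because $w\in W$ and $w'\in W^{\perp}$, while $B(w_{\lambda},w')=0$ by orthogonality of distinct eigenspaces of the $B$-symmetric operator $A$. Substituting $w=w_{\lambda}+w'$ in the first identity and using the second forces $B(w',w')=0$, hence $w'=0$ and $w=w_{\lambda}\in V_{\lambda}$. Thus $W\subseteq V_{\lambda}$, which is exactly $A|_{W}=\lambda\operatorname{Id}$.

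I do not foresee any real obstacle. The only point worth flagging is the interpretation of ``projection'': it must be the orthogonal projection with respect to the inner product that makes $A$ symmetric, since the conclusion can fail for a generic algebraic projection along an arbitrary complement. In the paper's applications this is automatic because $A$ is a metric endomorphism, hence $B$-symmetric, so the proof reduces cleanly to the two observations above.
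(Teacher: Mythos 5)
Your proof is correct and follows essentially the same route as the paper: both arguments observe that every eigenspace for an eigenvalue different from $\lambda$ lies in $W^{\perp}$ and then conclude $W\subseteq V_{\lambda}$, the paper by passing to orthogonal complements and you by the explicit computation $B(w',w')=0$. Your closing remark is well taken: the paper's statement for an abstract ``diagonalizable endomorphism'' tacitly uses that the projections are $B$-orthogonal and that $A$ is $B$-symmetric (so that the sum of the remaining eigenspaces really is the orthogonal complement of $V_\lambda$), which is automatic for the metric endomorphisms to which the lemma is applied.
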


\begin{proof} Let $\lambda_1,\dots,\lambda_n$ be the pairwise distinct eigenvalues of $A$ that are different from $\lambda$. For an eigenvalue $\lambda_i$, let $\fr{m}_{\lambda_i}\subseteq V$ be the corresponding eigenspace. Since $A$ is diagonalizable, we can write $V=\fr{m}_{\lambda}\oplus \fr{m}_{\lambda}^{\bot}$, where $\fr{m}_{\lambda}^{\bot}=\bigoplus_{i=1}^n\fr{m}_{\lambda_i}$. Moreover, write $V=W\oplus W^{\bot}$.  The hypothesis of the lemma implies that $\fr{m}_{\lambda_i}\subseteq W^{\bot}$ for all $i=1,\dots,n$. Therefore, $\fr{m}_{\lambda}^{\bot}\subseteq W^{\bot}$ hence $W\subseteq \fr{m}_{\lambda}$.\end{proof}

We proceed with the proofs of Propositions \ref{SimpFlag}, \ref{SimIsot} and \ref{Endo}.\\

\emph{Proof of Proposition \ref{SimpFlag}.} \ If $\fr{q}_j$ is not the tangent space of one of the flag manifolds $SO(2l+1)/U(l)$ and $Sp(l)/(U(1)\times Sp(l-1))$, then by Theorem \ref{AlArv}, $\left.A\right|_{\fr{q}_j}$ is standard.  Hence, $\left.A\right|_{\fr{q}_j}=\lambda_j\operatorname{Id}$. If $\fr{q}_j$ is the tangent space of $SO(2l+1)/U(l)$ or $Sp(l)/(U(1)\times Sp(l-1))$, then we recall from subsection \ref{2isot} that $\left.A\right|_{\fr{q}_j}$ is homothetic to

\begin{equation}\label{OMN}\begin{pmatrix}
\left.\operatorname{Id}\right|_{\fr{q}_j^1} && 0 \\
 0 && \left.\lambda_j\operatorname{Id}\right|_{\fr{q}_j^2}\end{pmatrix},\ \ \lambda_j>0.\end{equation}
 
 \noindent  It suffices to prove that $\lambda_j=1$.
 
 By part (i) of Lemma \ref{F1lem}, we may choose $X_1\in \fr{q}_j^1$ and $X_2\in \fr{q}_j^2$ such that $[X_1,X_2]\neq 0$.  More specifically, relation \eqref{BRKT} yields
 
 \begin{equation}\label{neq}[X_1,X_2]\in \fr{q}_j^1\setminus \{0\}.\end{equation}
 
  \noindent Since $A$ defines a g.o. metric on $G/S$, by Proposition \ref{GOCond}, for the vector $X:=X_1+X_2\in \fr{q}_j$ there exists a vector $\xi(X)$ in the Lie algebra $\fr{s}$ of $S$ such that $[\xi(X)+X,AX]=0$.  Applying the map $\pi_j$ and taking into account the fact that $X,AX\in \fr{q}_j$, we obtain $0=\pi_j\big([\xi(X)+X,AX]\big)=[\pi_j(\xi(X))+X,AX]$. In view of the form \eqref{OMN} of $\left.A\right|_{\fr{q}_j}$, the last condition is equivalent to 

\begin{equation}\label{ftem1}[\pi_j(\xi(X)),X_1]+\lambda_j[\pi_j(\xi(X)),X_2]+(\lambda_j-1)[X_1,X_2]=0.
\end{equation}

The $\operatorname{ad}_{\fr{s}}$-invariance of $\fr{q}_j^1$ and $\fr{q}_j^2$ along with the fact that $\xi(X)\in \fr{s}$, imply that $[\pi_j(\xi(X)),X_i]=\pi_j([\xi(X),X_i])\in \fr{q}_j^i$, $i=1,2$.  Then along with the fact that $\fr{q}_j^1,\fr{q}_j^2$ are $B$-orthogonal and relation \eqref{neq}, Equation \eqref{ftem1} yields 

\begin{equation}\label{Saul}\lambda_j[\pi_j(\xi(X)),X_2]=0.\end{equation}

\noindent  On the other hand, Lemma \ref{LIC} implies that $\pi_j(\xi(X)) \in \fr{t}_j$; Along with the facts that $\lambda_j>0$ and $X_2\neq 0$, and by virtue of part (ii) of Lemma \ref{F1lem}, Equation \eqref{Saul} yields $\pi_j(\xi(X))=0$.  Substituting $\pi_j(\xi(X))=0$ into Equation \eqref{ftem1} we deduce that $(\lambda_j-1)[X_1,X_2]=0$. Since $[X_1,X_2]\neq 0$, we conclude that $\lambda_j=1$.\qed\\

\emph{Proof of Proposition \ref{SimIsot}}.  Let $\lambda_j$ be as in Proposition \ref{SimpFlag}. Recall the set $R_{K_j}$ defined in subsection \ref{GFM}.  If $R_{K_j}=\emptyset$, by decomposition \eqref{IsotAlg}, the definition \eqref{TPJCC} of $\fr{t}_j^{\prime}$ and relation \eqref{pij}, we obtain $\fr{p}_j=\{0\}$ and hence Proposition \ref{SimIsot}) holds trivially.  Assume that $R_{K_j}\neq \emptyset$.  Since $\left.A\right|_{\fr{p}}$ is a diagonalizable endomorphism and $\fr{p}_j\subseteq \fr{p}$, there exists an eigenvector $v\in \fr{p}$ of $\left.A\right|_{\fr{p}}$ with non-zero projection $u_{\fr{p}_j}$ on $\fr{p}_j$. Let $v$ be such an eigenvector and let $\lambda$ be the corresponding eigenvalue of $v$.  In view of Lemma \ref{LinAlg} for $V=\fr{p}$ and $W=\fr{p}_j$, it suffices to show that $\lambda=\lambda_j$.

According to the decomposition $\fr{p}=\fr{s}^{\prime}\oplus \fr{p}_1\oplus \cdots \oplus \fr{p}_k$, write $v=v_{\fr{s}^{\prime}}+v_{\fr{p}_1}+\cdots +v_{\fr{p}_k}$ (here the subscripts denote the projection of $v$ to the corresponding subspaces). Observe that the projection of $v$ on $\fr{g}_j$ is given by $\pi_j(v)=\pi_j(v_{\fr{s}^{\prime}})+v_{\fr{p}_j}$, and write 

\begin{equation}\label{vpj}v_{\fr{p}_j}=v_{\fr{t}_j^{\prime}}+\sum_{\alpha\in R_{K_j}^+}X_{\alpha},  \ \ X_{\alpha}\in \fr{m}^{\alpha},\end{equation}

\noindent according to the decomposition \eqref{pij} of $\fr{p}_j$. Then

\begin{equation}\label{new}\pi_j(v)=\pi_j(v_{\fr{s}^{\prime}})+v_{\fr{t}_j^{\prime}}+\sum_{\alpha\in R_{K_j}^+}X_{\alpha},  \ \ X_{\alpha}\in \fr{m}^{\alpha}.
\end{equation}

\noindent In view of Equation \eqref{vpj}, our assumption that $v_{\fr{p}_j}\neq 0$ implies that at least one of the following cases is true:\\

\begin{center}  Case I) \ \ $\sum_{\alpha\in R_{K_j}^+}X_{\alpha}\neq 0$. \ \ \ \  Case II) \  $\sum_{\alpha\in R_{K_j}^+}X_{\alpha}=0$ and $v_{\fr{t}_j^{\prime}}\neq 0$.\end{center}

\noindent We will prove that in any of the above cases, the eigenvalue $\lambda$ of $v$ is equal to $\lambda_j$, which will conclude the proof of the proposition. 

For Case I), our goal is to apply Lemma \ref{EigenEq} for suitable subspaces in order to conclude that $\lambda=\lambda_j$.  To this end, let $\fr{m}_{\lambda}\subseteq \fr{p}$ be the eigenspace of $\left.A\right|_{\fr{p}}$ corresponding to the eigenvalue $\lambda$.  By the assumption for Case I), at least one vector $X_{\alpha_0}$, $\alpha_0\in R_{K_j}^+$, is non-zero.  By Lemma \ref{SumExis}, there exists a root $\beta\in R_{M_j}$ such that $\alpha_0+\beta \in R_{M_j}$. Set $\gamma_1:=\alpha_0+|\beta|$, $\gamma_2:=|\alpha_0-|\beta||$ and let $X_{|\beta|}$ be a non-zero vector in $\fr{m}^{|\beta|}\subset \fr{q}_j$.  Since $\alpha_0+\beta \in R_{M_j}$, at least one of the covectors $\gamma_1,\gamma_2$ is a root, and if it is a root then it is a positive one.  Then along with relation \eqref{ReRootBr} and the discussion following it, we deduce that $[X_{\alpha_0},X_{|\beta|}]$ is a non-zero vector in $\fr{m}^{\gamma_1}+\fr{m}^{\gamma_2}$.  We claim the following.

\begin{claim}\label{ClaimRoot}There does not exist a root $\alpha^{\prime}_0\in R_{K_j}^+$, different from $\alpha_0$, such that $|\alpha_0^{\prime}\pm|\beta||=\gamma_1$ or $|\alpha_0^{\prime}\pm|\beta||=\gamma_2$.
\end{claim}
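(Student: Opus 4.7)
The plan is to exploit the unique expansion of each root with respect to the partition $\Pi_j=\Pi_{K_j}\sqcup\Pi_{M_j}$ and to project every hypothesized equality onto the $\operatorname{span}_{\mathbb Z}(\Pi_{M_j})$-component by a linear map $P_M$. The key inputs are $P_M(\alpha_0)=P_M(\alpha_0^\prime)=0$, which holds because $\alpha_0,\alpha_0^\prime\in R_{K_j}=\operatorname{span}_{\mathbb Z}(\Pi_{K_j})$, and $P_M(|\beta|)\neq 0$, which holds because $\beta\in R_{M_j}$ has at least one nonzero $\Pi_{M_j}$-coefficient (a property that survives passing to the positive version $|\beta|$, since negating a root negates all coefficients simultaneously).

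Next I would encode the various absolute-value signs in a uniform way. Each of $\gamma_1,\gamma_2$ can be written as $\epsilon_1\alpha_0+\epsilon_2|\beta|$ for suitable $\epsilon_1,\epsilon_2\in\{\pm 1\}$: indeed $\gamma_1=\alpha_0+|\beta|$, while $\gamma_2=|\alpha_0-|\beta||$ equals $\alpha_0-|\beta|$ or $|\beta|-\alpha_0$ depending on which is the positive root. In the same spirit, any hypothetical $|\alpha_0^\prime\pm|\beta||$ has the form $\eta(\alpha_0^\prime+\delta|\beta|)$ for suitable $\delta,\eta\in\{\pm 1\}$.

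Assuming for contradiction that $|\alpha_0^\prime+\delta|\beta||=\gamma$ for some $\gamma\in\{\gamma_1,\gamma_2\}$, the two descriptions above together yield $\eta(\alpha_0^\prime+\delta|\beta|)=\epsilon_1\alpha_0+\epsilon_2|\beta|$, hence
\[
\alpha_0^\prime \;=\; \eta\epsilon_1\alpha_0 + (\eta\epsilon_2-\delta)\,|\beta|.
\]
Applying $P_M$ and using $P_M(\alpha_0)=P_M(\alpha_0^\prime)=0$ annihilates the $\alpha_0$-term on both sides and leaves $(\eta\epsilon_2-\delta)P_M(|\beta|)=0$. Since $P_M(|\beta|)\neq 0$, this forces $\eta\epsilon_2=\delta$, and substituting back gives $\alpha_0^\prime=\eta\epsilon_1\alpha_0=\pm\alpha_0$. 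Because both $\alpha_0^\prime$ and $\alpha_0$ are positive roots, only $\alpha_0^\prime=\alpha_0$ is possible, contradicting the hypothesis $\alpha_0^\prime\neq\alpha_0$.

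The only delicate point in this plan is the careful bookkeeping of signs introduced by the absolute value notation $|\cdot|$, which converts a possibly negative root into its positive counterpart; however, encoding all four sign choices uniformly through $\epsilon_1,\epsilon_2,\delta,\eta\in\{\pm 1\}$ collapses the four ostensibly distinct cases $|\alpha_0^\prime\pm|\beta||\in\{\gamma_1,\gamma_2\}$ into a single projection argument, so no real obstacle is anticipated.
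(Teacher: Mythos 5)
Your proof is correct and is essentially the paper's own argument: the paper also compares simple-root expansions, discarding $\alpha_0'=-\alpha_0$ by positivity and the case where the $|\beta|$-terms add (its case $|\alpha_0'\pm\alpha_0|=2|\beta|$) by noting that roots of $R_{K_j}$ have zero coefficients on $\Pi_{M_j}$ while $\beta\in R_{M_j}$ does not. Your projection $P_M$ merely packages the paper's three-case analysis into one computation, so the underlying idea is the same.
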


\noindent \emph{Proof of Claim \ref{ClaimRoot}.} If there exists a root $\alpha_0^{\prime}\in R_{K_j}^+$ such that $|\alpha_0^{\prime}\pm|\beta||=\gamma_1$ or $|\alpha_0^{\prime}\pm|\beta||=\gamma_2$, then one of the following cases is true: 
  
  \begin{equation*} \makebox{a)} \ \alpha_0^{\prime}=\alpha_0, \ \ \makebox{\ b)} \ \alpha_0^{\prime}=-\alpha_0, \ \ \makebox{\ c)}  \ |\alpha_0^{\prime}\pm\alpha_0|=2|\beta|.\end{equation*}
  
  \noindent  Since $\alpha_0^{\prime},\alpha_0\in R_j^+$, case b) is rejected. Case c) is also rejected because the covector $|\alpha_0^{\prime}\pm\alpha_0|$ has zero coefficients in all simple roots in $\Pi_{M_j}$ while the covector $2|\beta|$ has a non-zero coefficient in at least one of the simple roots in $\Pi_{M_j}$. Therefore, $\alpha_0^{\prime}=\alpha_0$.\qed\\
  
   Taking into account Claim \ref{ClaimRoot} and relations \eqref{ReRootBr}, we conclude that the projection of the vector $[\sum_{\alpha\in R_{K_j}^+}X_{\alpha},X_{|\beta|}]$ on $\fr{m}^{\gamma_1}+\fr{m}^{\gamma_2}$ coincides with the projection of the vector $[X_{\alpha_0},X_{|\beta|}]$ on $\fr{m}^{\gamma_1}+\fr{m}^{\gamma_2}$ and thus it is non-zero.  On the other hand, by relation \eqref{TPJCC}, the space $\fr{t_j}^{\prime}$ lies in the Cartan subalgebra $\fr{h}_j$ of $\fr{g}_j$.  Moreover, $\fr{s}^{\prime}\subset \fr{f}\subset \fr{h}$.  Therefore, the vector $\pi_j(v_{\fr{s}^{\prime}})+v_{\fr{t}_j^{\prime}}$ lies in $\fr{h}_j$. Hence, by relation \eqref{Xa} we obtain 
     
     \begin{equation}\label{anter}[\pi_j(v_{\fr{s}^{\prime}})+v_{\fr{t}_j^{\prime}},X_{|\beta|}]\subseteq [\fr{h}_j,\fr{m}^{|\beta|}]\subseteq \fr{m}^{|\beta|}.\end{equation}
     \noindent  Along with Equation \eqref{new}, the fact that $[\sum_{\alpha\in R_{K_j}^+}X_{\alpha},X_{|\beta|}]$ has non-zero projection on $\fr{m}^{\gamma_1}+\fr{m}^{\gamma_2}$ and the fact that $|\beta|\neq \gamma_1,\gamma_2$, relation \eqref{anter} implies that the vector $[\pi_j(v),X_{|\beta|}]$ has non-zero projection on $\fr{m}^{\gamma_1}+\fr{m}^{\gamma_2}$.  Moreover, since $X_{|\beta|}\in \fr{q}_j$, we have $[\pi_j(v),X_{|\beta|}]=\pi_j([v, X_{|\beta|}])$. Therefore, the vector $[v,X_{|\beta|}]$ has non-zero projection on $\fr{m}^{\gamma_1}+\fr{m}^{\gamma_2}$.  Along with the facts that $v\in \fr{m}_{\lambda}$, $X_{|\beta|}\in \fr{m}^{|\beta|}$ and $(\fr{m}^{\gamma_1}+\fr{m}^{\gamma_2})\cap (\fr{m}_{\lambda}\oplus \fr{m}^{|\beta|})=\{0\}$, the last assertion implies that\\ 
     
  \noindent   (i) the space $[\fr{m}_{\lambda},\fr{m}^{|\beta|}]$ has non-zero projection on $(\fr{m}_{\lambda}\oplus\fr{m}^{|\beta|})^{\bot}$.\\    

On the other hand, both spaces $\fr{m}_{\lambda}\subseteq \fr{p}$ and $\fr{m}^{|\beta|}\subseteq \fr{q}_j$ are $\operatorname{ad}_{\fr{s}}$-invariant; Indeed, because of the $\operatorname{ad}_{\fr{s}}$-equivariance of $\left.A\right|_{\fr{p}}$, any eigenspace $\fr{m}_{\lambda}$ of $\left.A\right|_{\fr{p}}$ is $\operatorname{ad}_{\fr{s}}$-invariant, while $[\fr{s},\fr{m}^{|\beta|}]=[\pi_j(\fr{s}),\fr{m}^{|\beta|}]\subseteq [\fr{h}_j,\fr{m}^{|\beta|}]\subseteq \fr{m}^{|\beta|}$.  Moreover, \\

\noindent (ii) $\left.A\right|_{\fr{m}_{\lambda}}=\lambda\operatorname{Id}$ and $\left.A\right|_{\fr{m}^{|\beta|}}=\lambda_j\operatorname{Id}$ (the latter equation follows from Proposition \ref{SimpFlag}). \\

 \noindent Taking into account (i) and (ii) and applying Lemma \ref{EigenEq} we conclude that $\lambda=\lambda_j$ for Case I).\\\\

For Case II), assume that $\sum_{\alpha\in R_{K_j}^+}X_{\alpha}=0$ and $v_{\fr{t}^{\prime}_j}\neq 0$. Equation \eqref{new} implies that

\begin{equation}\label{Tzak1}\pi_j(v)=\pi_j(v_{\fr{s}^{\prime}})+v_{\fr{t}^{\prime}_j}.\end{equation}

\noindent Since Case I) implies that $\lambda=\lambda_j$, applying Lemma \ref{LinAlg} for $V=\fr{p}$ and $W=\sum_{\alpha\in R^+_{K_j}}\fr{m}^{\alpha}$ yields

\begin{equation}\label{Ei}\left.A\right|_{\sum_{\alpha\in R^+_{K_j}}\fr{m}^{\alpha}}=\lambda_j\operatorname{Id}.\end{equation}

\noindent Since $v_{\fr{t}^{\prime}_j}\neq 0$, there exists a root $\alpha_0\in R_{K_j}^+$ such that $\alpha_0(v_{\fr{t}^{\prime}_j})\neq 0$ for otherwise, $v_{\fr{t}^{\prime}_j}\in \fr{t}_j$ which is impossible by the orthogonality of $\fr{t}_j$ and $\fr{t}_j^{\prime}$.  On the other hand, the definition \eqref{Ef} of $\fr{f}$ and the fact that $\fr{s}^{\prime}\subset \fr{f}$ imply that $\pi_j(v_{\fr{s}^{\prime}}) \in \fr{t}_j$.  Hence, the definition \eqref{center} of $\fr{t}_j$ yields $\alpha_0(\pi_j(v_{\fr{s}^{\prime}}))=0$.  Along with Equation \eqref{Tzak1}, we conclude that

 \begin{equation}\label{T1}\alpha_0(\pi_j(v))=\alpha_0(v_{\fr{t}^{\prime}_j})\neq 0.\end{equation}

\noindent Choose a non-zero vector $X_{\alpha_0}\in \fr{m}^{\alpha_0}$.  Relation \eqref{Ei} yields

 \begin{equation}\label{Ei1}AX_{\alpha_0}=\lambda_jX_{\alpha_0}.\end{equation}
 
\noindent Since $\left.A\right|_{\fr{p}}$ defines a bi-invariant metric, Lemma \ref{Bi} implies that $[X,AX]=0$ for all $X\in \fr{p}$ and thus
 
 \begin{equation}\label{BiInvT}[\pi_j(X),\pi_j(AX)]=0 \ \ \makebox{for all} \ \ X\in \fr{p}.\end{equation}

\noindent Set $X:=v+X_{\alpha_0}$. By taking into account the fact that  $\pi_j(X_{\alpha_0})=X_{\alpha_0}$, the fact that $Av=\lambda v$, Equation \eqref{Ei1}, the fact that $\pi_j(v)\in \fr{t}_j\oplus \fr{t}_j^{\prime}=\fr{h}_j$ as well as Equation \eqref{Xa}, we deduce that Equation \eqref{BiInvT} for $X$ is equivalent to

\begin{equation*}0=(\lambda_j-\lambda)[\pi_j(v),X_{\alpha_0}]=(\lambda_j-\lambda)\alpha_0(\pi_j(v))\bar{X}_{\alpha_0}.\end{equation*}

\noindent The above equation, along with relation \eqref{T1} and the fact that $\bar{X}_{\alpha_0}\neq 0$ (given that $X_{\alpha_0}\neq 0$), implies that $\lambda=\lambda_j$.  This settles Case II).\qed \\

\noindent \emph{Proof of Proposition \ref{Endo}}   By Propositions \ref{SimpFlag} and \ref{SimIsot}, the $B$-orthogonal complement $(\fr{s}^{\prime})^{\bot}=\bigoplus_{j=1}^k(\fr{p}_j\oplus \fr{q}_j)$ of $\fr{s}^{\prime}$ in $\fr{m}$ is $A$-invariant.  Hence, by taking into account the symmetry of $A$ we obtain

\begin{equation*}B\big(A\fr{s}^{\prime},(\fr{s}^{\prime})^{\bot}\big)=B\big(\fr{s}^{\prime},A(\fr{s}^{\prime})^{\bot}\big)\subseteq B\big(\fr{s}^{\prime},(\fr{s}^{\prime})^{\bot}\big)=\{0\}.
\end{equation*}

\noindent Therefore, $A\fr{s}^{\prime}$ is $B$-orthogonal to $(\fr{s}^{\prime})^{\bot}$ and thus the space $\fr{s}^{\prime}$ is $A$-invariant.

\section{Proof of the main results}\label{proof}

\subsection{Proof of Theorem \ref{main}}

 We recall the notation of Section \ref{mainresults}. For the sufficiency part of the theorem, recall that any naturally reductive metric is a g.o. metric and the same is true for any normal metric. 
  For the necessity part, let $g$ be a g.o. metric on $G/S$ and let $A$ be the corresponding metric endomorphism satisfying $g(X,Y)_o=B(AX,Y)$, $X,Y\in \fr{m}$, where $B$ is the negative of the Killing form of $\fr{g}$ and $\fr{m}$ is the $B$-orthogonal complement of $\fr{s}$ in $\fr{g}$. We will firstly prove that $g$ is naturally reductive.  
  
  Let $S^0$ be the identity component of $S$, which is a torus in $G$.  Then $\fr{s}$ is the Lie algebra of both $S$ and $S^0$.  Therefore, the space $\fr{m}$ can be identified with $T_o(G/S^0)$, and the $\operatorname{Ad}_S$-equivariant endomorphism $A:\fr{m}\rightarrow \fr{m}$ also defines a $G$-invariant metric on $G/S^0$.  Since $A$ is a g.o. metric on $G/S$, Proposition \ref{GOCond} states that there exists a map $\xi:\fr{m}\rightarrow \fr{s}$ such that 

\begin{equation}\label{StarRel}[\xi(X)+X,AX]=0 \ \ \makebox{for all} \ \ X\in \fr{m},
\end{equation}
  
\noindent and thus $A$ also defines a g.o. metric on $G/S^0$.  By Theorem \ref{NecForm}, $A$ has the form \eqref{NecForm1}.  We will initially use this form to derive an equivalent condition to Equation \eqref{StarRel}. 

In view of the decomposition $\fr{g}=\fr{g}_1\oplus \cdots \oplus \fr{g}_k$ and the fact that the projections $\pi_j:\fr{g}\rightarrow \fr{g}_j$ are homomorphisms, Equation \eqref{StarRel} is equivalent to 

\begin{equation}\label{StarRel1}[\pi_{j}(\xi(X)+X),\pi_j(AX)]=0 \ \ \makebox{for all}\ \ X\in \fr{m}\ \ \makebox{and} \ \  j=1,\dots, k.
\end{equation}

\noindent Consider the decomposition $\fr{m}=\fr{s}^{\prime}\oplus (\fr{p}_1\oplus \fr{q}_1)\oplus \cdots \oplus (\fr{p}_k\oplus \fr{q}_k)$.  For $X\in \fr{m}$, write 

\begin{equation*}X=X_{\fr{s}^{\prime}}+(X_{\fr{p}_1}+X_{\fr{q}_1})+\cdots +(X_{\fr{p}_k}+X_{\fr{q}_k}),
\end{equation*}

\noindent according to the above decomposition, and observe that 

\begin{equation}\label{PijX}\pi_j(X)=\pi_j(X_{\fr{s}^{\prime}})+ X_{\fr{p}_j}+X_{\fr{q}_j} \ \ \makebox{and} \ \ \pi_j(AX)=\pi_j(AX_{\fr{s}^{\prime}})+ \lambda_j(X_{\fr{p}_j}+X_{\fr{q}_j}),
\end{equation}

\noindent where the latter equation follows from \eqref{NecForm1}. In view of the decomposition of $\fr{q}_j$ in \eqref{koin}, write

\begin{equation}\label{koin1}
X_{\fr{q}_j}=\sum_{\alpha\in R_{M_j}^+}X_{\alpha}, \ \ X_{\alpha}\in \fr{m}^{\alpha}.
\end{equation}

\noindent Moreover, the following equations are valid.

\begin{eqnarray}\label{Centr1}&&\big[\pi_j(\xi(X)),X_{\fr{p}_j}\big]=0, \ \ \ \ \ \ \ \ \ \ \big[\pi_j(X_{\fr{s}^{\prime}}),X_{\fr{p}_j}\big]=0, \ \ \ \ \ \ \ \  \big[\pi_j(AX_{\fr{s}^{\prime}}),X_{\fr{p}_j}\big]=0 \ \ \makebox{and} \\ \label{Centr2}&&\big[\pi_j(\xi(X)),\pi_j(AX_{\fr{s}^{\prime}})\big]=0, \ \ \ \big[\pi_j(X_{\fr{s}^{\prime}}),\pi_j(AX_{\fr{s}^{\prime}})\big]=0.\end{eqnarray}

\noindent The first equation follows from the facts that $X_{\fr{p}_j}\in \fr{k}_j$, $\xi(X)\in \fr{s}$ and $\pi_j(\fr{s})$ is contained in the center $\fr{t}_j$ of $\fr{k}_j$ (Lemma \ref{LIC}).  The second equation follows also from the fact that $\pi_j(X_{\fr{s}^{\prime}})\in \fr{t}_j$, given that $\fr{s}^{\prime}\subset \fr{f}=\fr{t}_1\oplus \cdots \oplus \fr{t}_k$. The third equation is true for the same reason, and by taking into account that $\fr{s}^{\prime}$ is $A$-invariant (Proposition \ref{Endo}). The fourth equation is also true because $\fr{s}^{\prime}$ is $A$-invariant, and because $\big[\pi_j(\xi(X)),\pi_j(AX_{\fr{s}^{\prime}})\big]\subseteq \pi_j([\fr{s},\fr{s}^{\prime}])\subseteq \pi_j([\fr{f},\fr{f}])=\{0\}$.  Similarly, the fifth equation holds because $\pi_j([\fr{s}^{\prime},\fr{s}^{\prime}])=\{0\}$. 

By using the form \eqref{NecForm1} of $A$ and relations \eqref{PijX} - \eqref{Centr2}, Equation \eqref{StarRel1} is equivalent to 

\begin{equation}\label{StarRel2}
\sum_{\alpha\in R^+_{M_j}}[Y_j,X_{\alpha}]=0, \ \ \makebox{where} \ \ Y_j=Y_j(X):=\lambda_j\pi_j(\xi(X))+\big(\pi_j\circ (\lambda_j\operatorname{Id}-A)\big)X_{\fr{s}^{\prime}}.
\end{equation}

\noindent Since $\pi_j(\xi(X))\in \fr{t}_j$ and $(\pi_j\circ A)(\fr{s}^{\prime})\subseteq \pi_j(\fr{s}^{\prime})\subset \fr{t}_j$, the vector $Y_j$ lies in $\fr{t}_j$, and thus it lies in the Cartan subalgebra $\fr{h}_j$ of $\fr{g}_j$.  Hence, by virtue of Equation \eqref{Xa}, Equation \eqref{StarRel2} is equivalent to $\sum_{\alpha\in R^+_{M_j}}\alpha(Y_j)\bar{X}_{\alpha}=0$.  In summary, we have arrived to the following conclusion.

\begin{lemma}\label{Equiv}
The following are equivalent:\\

\noindent \emph{(i)} The endomorphism $A$ given by Equation \eqref{NecForm1} defines a g.o. metric on $G/S$.\\

\noindent \emph{(ii)} There exists a map $\xi:\fr{m}\rightarrow \fr{s}$ such that all $X\in \fr{m}$ satisfy

\begin{equation*}\label{axe}[\xi(X)+X,AX]=0.\end{equation*}

\noindent \emph{(iii)} There exists a map $\xi:\fr{m}\rightarrow \fr{s}$ such that all vectors $X\in \fr{m}$ satisfy 

\begin{equation}\label{StarRel3}
\sum_{\alpha\in R^+_{M_j}}\alpha\big(Y_j(X)\big)\bar{X}_{\alpha}=0,  \ \ j=1,\dots ,k,
\end{equation}

\noindent where $\sum_{\alpha\in R^+_{M_j}}X_{\alpha}$ is the projection $X_{\fr{q}_j}$ of $X$ on $\fr{q}_j$ and 

\begin{equation}\label{psit}Y_j(X)=\pi_j\bigg(\lambda_j\xi(X)+(\lambda_j\operatorname{Id}-A)(X_{\fr{s}^{\prime}})\bigg), \ \ j=1,\dots ,k,\end{equation}

\noindent where $X_{\fr{s}^{\prime}}$ is the projection of $X$ on $\fr{s}^{\prime}$.
\end{lemma}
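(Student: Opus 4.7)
The equivalence of (i) and (ii) is essentially the content of Proposition \ref{GOCond} (applied with $H=S$, $\mathfrak{h}=\mathfrak{s}$), so nothing more is required there. All the real work lies in (ii) $\Leftrightarrow$ (iii), and in fact the derivation has been carried out in the paragraph preceding the lemma; the plan is simply to assemble those computations into a clean two-way argument.

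The forward direction (ii) $\Rightarrow$ (iii) proceeds by the following chain. First, because $\mathfrak{g}=\bigoplus_j\mathfrak{g}_j$ as a Lie algebra direct sum and each $\pi_j$ is a homomorphism, the single equation $[\xi(X)+X,AX]=0$ is equivalent to the $k$ equations \eqref{StarRel1}. Next I would substitute the decomposition $X=X_{\mathfrak{s}'}+\sum_j(X_{\mathfrak{p}_j}+X_{\mathfrak{q}_j})$ together with the block form \eqref{NecForm1} of $A$, which immediately yields the expressions \eqref{PijX} for $\pi_j(X)$ and $\pi_j(AX)$. Expanding the resulting double bracket and discarding every term that vanishes by the centralization relations \eqref{Centr1}--\eqref{Centr2} (all of which come from $\pi_j(\mathfrak{s}),\pi_j(\mathfrak{s}')\subseteq\mathfrak{t}_j$, from $X_{\mathfrak{p}_j}\in\mathfrak{k}_j$, and from the $A$-invariance of $\mathfrak{s}'$ established in Proposition \ref{Endo}), what remains is exactly $[Y_j,X_{\mathfrak{q}_j}]=0$ with $Y_j$ as in \eqref{psit}. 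Writing $X_{\mathfrak{q}_j}=\sum_{\alpha\in R_{M_j}^+}X_\alpha$ via \eqref{koin1} and using $Y_j\in\mathfrak{t}_j\subseteq\mathfrak{h}_j$ together with \eqref{Xa} converts this to $\sum_{\alpha\in R_{M_j}^+}\alpha(Y_j)\bar X_\alpha=0$, which is \eqref{StarRel3}.

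The reverse direction (iii) $\Rightarrow$ (ii) is essentially the same computation read backwards: given a map $\xi$ satisfying \eqref{StarRel3} for every $X$, the same substitutions and cancellations (all of which are algebraic identities that hold regardless of whether the g.o.\ condition is assumed) show that $\pi_j([\xi(X)+X,AX])=0$ for every $j$, and the direct sum decomposition then forces $[\xi(X)+X,AX]=0$.

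The only non-trivial issue is bookkeeping: making sure each vanishing bracket in \eqref{Centr1}--\eqref{Centr2} is invoked in the right place, and that the final $Y_j\in\mathfrak{h}_j$ before applying \eqref{Xa}. Both hinge on Lemma \ref{LIC} ($\pi_j(\mathfrak{s})\subseteq\mathfrak{t}_j$) and on $\mathfrak{s}'\subseteq\mathfrak{f}=\bigoplus_j\mathfrak{t}_j$, plus the $A$-invariance of $\mathfrak{s}'$ from Proposition \ref{Endo}; with these in hand, each summand of $[\pi_j(\xi(X)+X),\pi_j(AX)]$ either lies in a root space $\mathfrak{m}^\alpha$ with $\alpha\in R_{M_j}^+$ or is zero, so no information is lost in the passage between (ii) and (iii). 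This makes the whole argument a direct verification rather than a deep result.
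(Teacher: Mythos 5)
Your proposal is correct and follows essentially the same route as the paper: the paper's own justification of Lemma \ref{Equiv} is precisely the chain of equivalences \eqref{StarRel}--\eqref{StarRel2} carried out in the preceding paragraphs (with (i)$\Leftrightarrow$(ii) being Proposition \ref{GOCond}), and since every step there is an equivalence, both directions of (ii)$\Leftrightarrow$(iii) come out of the single computation, exactly as you describe. One small bookkeeping remark: the terms $[X_{\fr{p}_j},\lambda_j X_{\fr{q}_j}]$ and $[X_{\fr{q}_j},\lambda_j X_{\fr{p}_j}]$ are not killed by the relations \eqref{Centr1}--\eqref{Centr2} but cancel each other pairwise, which works only because $A$ has the \emph{same} eigenvalue $\lambda_j$ on $\fr{p}_j$ and $\fr{q}_j$ (Propositions \ref{SimpFlag} and \ref{SimIsot}); this is worth stating explicitly when expanding the bracket.
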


Continuing with the main proof, since $A$ defines a g.o. metric, there exists a map $\xi:\fr{m}\rightarrow \fr{s}$ such that Equation \eqref{StarRel3} is satisfied.  To prove that $A$ is naturally reductive, by virtue of Proposition \ref{SZ2} and the equivalence of parts (ii) and (iii) in Lemma \ref{Equiv}, it suffices to find an $\operatorname{Ad}_S$-equivariant linear map $\widetilde{\xi}:\fr{m}\rightarrow \fr{s}$, possibly different from $\xi$, such that Equation \eqref{StarRel3} remains true for $Y_j(X)=\pi_j\bigg(\lambda_j\widetilde{\xi}(X)+(\lambda_j\operatorname{Id}-A)(X_{\fr{s}^{\prime}})\bigg)$.

 In view of the decomposition $\fr{m}=\fr{s}^{\prime}\oplus \fr{p}\oplus \fr{q}$, write $X=X\big(X_{\fr{s}^{\prime}},X_{\fr{p}},X_{\fr{q}}\big)$, where $X_{\fr{p}}=X_{\fr{p}_1}+\cdots +X_{\fr{p}_k}$ and $X_{\fr{q}}=X_{\fr{q}_1}+\cdots +X_{\fr{q}_k}$.  Moreover, write

\begin{equation*}\xi(X)=\xi(X_{\fr{s}^{\prime}},X_{\fr{p}},X_{\fr{q}}) \ \ \makebox{and} \ \ Y_j(X)=Y_j(X_{\fr{s}^{\prime}},X_{\fr{p}},X_{\fr{q}}). \end{equation*}

\noindent  We will choose the new map $\widetilde{\xi}:\fr{m}\rightarrow \fr{s}$ in such a way that it will be independent from $X_{\fr{p}}$ and $X_{\fr{q}}$, and will depend only on $X_{\fr{s}^{\prime}}$. To this end, fix the vector $X_{\fr{p}}^0:=0$ and vectors $X^0_{\fr{q}_j}:=\sum_{\alpha\in R^+_{M_j}}X_{\alpha}$, $j=1,\dots ,k$, such that $X_{\alpha}\neq 0$ (and thus $\bar{X}_{\alpha}\neq 0$) for all $\alpha\in R^+_{M_j}$.  Set $X_{\fr{q}}^0:=\sum_{j=1}^kX^0_{\fr{q}_j}$ and

\begin{equation*}X^0:=X\big(X_{\fr{s}^{\prime}},X^0_{\fr{p}},X^0_{\fr{q}}\big).\end{equation*}

\noindent  Since $X_{\fr{p}}^0,X_{\fr{q}}^0$ are fixed, $X^0$ depends only on $X_{\fr{s}^{\prime}}$. Since $A$ is a g.o. metric and since the vectors $\bar{X}_{\alpha}$, $\alpha\in R_{M_j}$, are linearly independent and non-zero for all $\alpha\in R_{M_j}$ (due to the choice of $X^0_{\fr{q}_j}$), Equation \eqref{StarRel3} for $X=X^0$ yields

 \begin{equation*}\alpha \big(Y_j(X^0)\big)=0,\end{equation*}
 
 \noindent for all $\alpha\in R^+_{M_j}$ and for all $j=1,\dots,k$. Hence, by virtue of decomposition \eqref{koin} for $\fr{q}_j$ and relation \eqref{Xa}, we obtain $[Y_j(X^0),\fr{q}_j]=\{0\}$.  On the other hand, as discussed above, each vector $Y_j(X^0)$, $j=1,\dots,k$, lies in the center $\fr{t}_j$ of $\fr{k}_j$.  We conclude that $Y_j(X^0)$ lies in the center of $\fr{g}_j=\fr{k}_j\oplus \fr{q}_j$.  Since $\fr{g}_j$ is simple, $Y_j(X^0)=0$ for all $j=1,\dots,k$.  Therefore, Equation \eqref{psit} for $X=X^0$ yields 

\begin{equation}\label{LR}
(\pi_j\circ\xi)(X^0)=\frac{1}{\lambda_j}\big(\pi_j\circ (A-\left.\lambda_j\operatorname{Id}\right.)\big)(X_{\fr{s}^{\prime}}).
\end{equation}

We introduce the map $\widetilde{\xi}:\fr{m}\rightarrow \fr{s}$, defined by 

\begin{equation}\label{XiDef}\widetilde{\xi}(X):=\xi(X^0) \ \ \makebox{for} \ \ X=X\big(X_{\fr{s}^{\prime}},X_{\fr{p}},X_{\fr{q}}\big)\in \fr{m}.\end{equation}

 \noindent The map $\widetilde{\xi}$ is well-defined because the image of ${\xi}$ lies in $\fr{s}$.  Moreover, Equation \eqref{LR} yields

\begin{equation}\label{CDM}\widetilde{\xi}(X)=\sum_{j=1}^k (\pi_j\circ \xi)(X^0)=\sum_{j=1}^k\frac{1}{\lambda_j}\big(\pi_j\circ (A-\left.\lambda_j\operatorname{Id}\right.)\big)(X_{\fr{s}^{\prime}}).
\end{equation}

 \noindent The map $\widetilde{\xi}$ satisfies the following properties:\\

\noindent 1) Since $X^0$ depends only on $X_{\fr{s}^{\prime}}$, $\widetilde{\xi}$ also depends only on $X_{\fr{s}^{\prime}}$.  Therefore $\widetilde{\xi}(X)=\widetilde{\xi}(X_{\fr{s}^{\prime}})$.\\

\noindent 2) By Equation \eqref{CDM}, $\widetilde{\xi}$ is linear on $\fr{s}^{\prime}$. Hence, by property 1) $\widetilde{\xi}$ is linear on $\fr{m}$.\\

\noindent 3)  The map $\widetilde{\xi}$ is $\operatorname{Ad}_S$-equivariant (and thus $\operatorname{Ad}_{S^0}$-equivariant).\\

\noindent Indeed, $[X,\pi_j(Y)]=[\pi_j(X),\pi_j(Y)]=\pi_j([X,Y])$ for all $X,Y\in \fr{g}$.  Therefore, given that $G$ is connected, each map $\pi_j$ is $\operatorname{Ad}_G$-equivariant and thus $\operatorname{Ad}_S$-equivariant. Moreover, $A$ is $\operatorname{Ad}_S$-equivariant.  Therefore, by relation \eqref{CDM} it follows that $\widetilde{\xi}$ is $\operatorname{Ad}_S$-equivariant.\\

\noindent 4) The vector $\widetilde{\xi}(X)+X$ satisfies the equation 

\begin{equation}\label{StarRel4}[\widetilde{\xi}(X)+X,AX]=0.\end{equation}

\noindent  Indeed, by Lemma \ref{Equiv}, Equation \eqref{StarRel4} is equivalent to Equation \eqref{StarRel3}, where $\sum_{\alpha\in R^+_{M_j}}\bar{X}_{\alpha}$ is the projection $X_{\fr{q}_j}$ of $X$ on $\fr{q}_j$ and $Y_j(X)=\pi_j\bigg(\lambda_j\widetilde{\xi}(X)+(\lambda_j\operatorname{Id}-A)(X_{\fr{s}^{\prime}})\bigg)$.  On the other hand, relation \eqref{CDM} implies that $\lambda_j(\pi_j\circ\widetilde{\xi})(X)=\lambda_j(\pi_j\circ \xi)(X^0)=\big(\pi_j\circ (A-\left.\lambda_j\operatorname{Id}\right.)\big)(X_{\fr{s}^{\prime}})$ and hence $Y_j(X)=0$. Therefore, Equation \eqref{StarRel3} is trivially satisfied, which verifies Equation \eqref{StarRel4}. \\

From properties 2), 3) and 4) and Proposition \ref{SZ2}, we conclude that $A$ defines a naturally reductive metric (on both spaces $G/S$ and $G/S^0$). Hence, we obtain the first part of Theorem \ref{main}.\\

 It remains to show that any g.o. metric $g$ on $G/S$ is a normal metric. The first part of the theorem implies that $g$ is naturally reductive with respect to some reductive decomposition $\fr{g}=\fr{s}\oplus \widetilde{\fr{m}}$.  In view of Proposition \ref{SZ2} and the proof of the first part of the theorem, we have 

\begin{equation}\label{DefWm}\widetilde{\fr{m}}=\{\widetilde{\xi}(X)+X:X\in \fr{m}\},\end{equation}
 
\noindent where $\widetilde{\xi}$ is defined by \eqref{XiDef}.  We claim the following.

\begin{claim}\label{claim2}For each $j=1,\dots,k$, there exist a non-zero subspace $V_j$ of $\fr{g}_{j}$ such that for any naturally reductive metric $g$ on $G/S$, with respect to some decomposition $\fr{g}=\fr{s}\oplus \widetilde{\fr{m}}$, $V_j\subset \widetilde{\fr{m}}$.\end{claim}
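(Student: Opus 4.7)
The plan is to exhibit $V_j$ as a purely Lie-theoretic datum depending only on $G$, $S$ and the simple ideal $\fr{g}_j$, namely $V_j := \fr{p}_j \oplus \fr{q}_j$. Because the subspaces $\fr{p}_j$ and $\fr{q}_j$ are defined in Section \ref{mainresults} in terms of $G$, $S$ and $j$ alone, this choice of $V_j$ is automatically independent of the metric $g$, which is what the claim demands.

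Non-triviality of $V_j$ is the first point I would verify. Consider the orthogonal decomposition $\fr{g}_j = \fr{t}_j \oplus \fr{p}_j \oplus \fr{q}_j$, where $\fr{t}_j$ is the center of $\fr{k}_j$ and is therefore abelian. If $V_j$ were zero then $\fr{g}_j = \fr{t}_j$ would be abelian, contradicting the simplicity of $\fr{g}_j$.

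The second step is the inclusion $V_j \subseteq \widetilde{\fr{m}}$. Since every naturally reductive metric is a g.o.\ metric, the construction carried out in the proof of the first part of Theorem \ref{main} applies to any naturally reductive metric $g$ on $G/S$ and produces an $\operatorname{Ad}_S$-equivariant linear map $\widetilde{\xi}:\fr{m}\to\fr{s}$ with $\widetilde{\fr{m}} = \{\widetilde{\xi}(X)+X : X\in\fr{m}\}$ realizing natural reductivity (Proposition \ref{SZ2}). Crucially, property 1) of $\widetilde{\xi}$ established in that proof asserts $\widetilde{\xi}(X) = \widetilde{\xi}(X_{\fr{s}^{\prime}})$, so by linearity $\widetilde{\xi}$ vanishes identically on $\fr{p} \oplus \fr{q}$. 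Since $V_j = \fr{p}_j \oplus \fr{q}_j \subset \fr{p} \oplus \fr{q}$, every $X \in V_j$ satisfies $X = \widetilde{\xi}(X) + X \in \widetilde{\fr{m}}$, yielding the required inclusion.

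I do not expect any real obstacle; the only subtlety is that the claim asks for \emph{some} reductive decomposition realizing natural reductivity to contain $V_j$, rather than every such decomposition, and the explicit $\widetilde{\xi}$ built in the first part of Theorem \ref{main} supplies the convenient choice. Looking ahead, this $V_j$ will be used to show that the Kostant ideal $\widetilde{\fr{g}} = \widetilde{\fr{m}} + [\widetilde{\fr{m}},\widetilde{\fr{m}}]$ meets each simple ideal $\fr{g}_j$ non-trivially, forcing $\fr{g}_j \subseteq \widetilde{\fr{g}}$ and hence $\widetilde{\fr{g}} = \fr{g}$, after which Theorem \ref{KosThe} delivers an $\operatorname{Ad}_G$-invariant inner product on $\fr{g}$ inducing $g$, i.e.\ $g$ is normal.
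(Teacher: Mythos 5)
Your proof is correct and takes essentially the same route as the paper: both rest on the explicit map $\widetilde{\xi}$ from the first part of the proof of Theorem \ref{main}, which by \eqref{CDM} depends only on $X_{\fr{s}^{\prime}}$ and hence vanishes on $\fr{p}\oplus\fr{q}$, so that \eqref{DefWm} gives the inclusion in $\widetilde{\fr{m}}$. The only (harmless) difference is your uniform choice $V_j=\fr{p}_j\oplus\fr{q}_j$, with non-triviality deduced from the simplicity of $\fr{g}_j$, whereas the paper splits into the cases $\fr{k}_j=\fr{g}_j$ (taking $V_j=\fr{g}_j=\fr{p}_j$) and $\fr{k}_j\subsetneq\fr{g}_j$ (taking $V_j=\fr{q}_j$).
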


\emph{Proof of Claim \ref{claim2}}. Choose a $j=1,\dots,k$, and consider the subalgebra $\fr{k}_{j}=\fr{k}\cap \fr{g}_{j}$.  If $\fr{k}_{j}=\fr{g}_{j}$ then set $V_j:=\fr{g}_{j}$.  Since $\fr{g}_{j}$ is centerless, $\fr{t}_j=\{0\}$.  Therefore, in view of relations \eqref{IsotAlg} and \eqref{pij}, we obtain $\fr{p}_{j}=\fr{k}_{j}=V_j$, and thus $V_j\subset \fr{m}$.  Along with the fact that the corresponding metric endomorphism $A$ of $g$ has the form \eqref{NecForm1}, we deduce that $\left.A\right|_{V_j}=\left.\lambda_{j}\operatorname{Id}\right.$ for some $\lambda_j>0$.  Then relation \eqref{CDM} yields $\widetilde{\xi}(V_j)=\{0\}$.  Therefore, by relation \eqref{DefWm} and the fact that $V_j\subset \fr{m}$ we conclude that $V_j\subset \widetilde{\fr{m}}$.  If $\fr{k}_{j}\subsetneq \fr{g}_{j}$, then set $V_j:=\fr{q}_{j}$, i.e. $V_j$ is the $B$-orthogonal complement of $\fr{k}_{j}$ in $\fr{g}_{j}$.  Then $V_j\subset \fr{m}$.  Moreover, the necessary form \eqref{NecForm1} of the metric endomorphism $A$ yields $\left.A\right|_{V_j}=\left.\lambda_{j}\operatorname{Id}\right.$.  Again we have $\widetilde{\xi}(V_j)=\{0\}$,  and thus $V_j\subset \widetilde{\fr{m}}$.  In any case, we conclude that there exists a non-zero space $V_j\subseteq \fr{g}_j$ such that $V_j\subset \widetilde{\fr{m}}$. Finally, we remark that the choice of $V_j$ is independent of $g$, which concludes the proof of the claim.\qed \\

\noindent Continuing with the proof of the second part of Theorem \ref{main}, $G/S$ is connected as $G$ is connected. Moreover, since $G$ is semisimple and $S$ is abelian, $G$ acts almost effectively on $G/S$ ($G$ does not contain non-discrete abelian normal subgroups).  Since $g$ is naturally reductive with respect to the decomposition $\fr{g}=\fr{s}\oplus \widetilde{\fr{m}}$, by virtue of Theorem \ref{KosThe}, the space $\widetilde{\fr{g}}:=\widetilde{\fr{m}}+[\widetilde{\fr{m}},\widetilde{\fr{m}}]$ is an ideal of $\fr{g}$ such that the corresponding subgroup $\widetilde{G}$ of $G$ acts transitively on $G/S$.  Moreover, there exists a unique $\operatorname{Ad}_{\widetilde{G}}$-invariant, symmetric, non-degenerate bilinear form $Q$ on $\widetilde{\fr{g}}$ such that 

\begin{equation}\label{karant}Q(\fr{s}\cap\widetilde{\fr{g}},\widetilde{\fr{m}})=\{0\} \ \ \makebox{and} \ \ g( X,Y )_o=Q(X,Y) \ \ \makebox{for all} \ \ X,Y\in \widetilde{\fr{m}}.\end{equation}

Since $\widetilde{\fr{g}}$ is an ideal of the semisimple algebra $\fr{g}$, it has the form $\widetilde{\fr{g}}=\fr{g}_{i_1}\oplus \cdots \oplus \fr{g}_{i_s}$, $s\leq k$.  But Claim \ref{claim2} implies that $\widetilde{\fr{m}}$ has non-zero projection on $\fr{g}_j$ for all $j=1,\dots,k$.  Along with the fact that $\widetilde{\fr{m}}\subset \widetilde{\fr{g}}$, we deduce that $s=k$ and hence $\fr{g}=\widetilde{\fr{g}}$.  Therefore, by Proposition \ref{DZp}, $Q$ has the form 

\begin{equation}\label{Yields}Q=\left.\mu_{1}B\right|_{\fr{g}_{1}\times \fr{g}_{1}}+\cdots +\left.\mu_{k}B\right|_{\fr{g}_{k}\times \fr{g}_{k}}, \ \ \mu_{j}\in \mathbb R^*.\end{equation}

\noindent 

To prove that the metric $g$ is normal, it remains to show that $Q$ is positive definite, or equivalently that $\mu_{j}>0$, $j=1,\dots, k$.  Indeed, by considering for each $j$ the subspace $V_j$ of $\widetilde{\fr{m}}$ obtained in Claim \ref{claim2} and by taking into account that $Q$ is positive definite on $\widetilde{\fr{m}}$ (relation \eqref{karant}), we deduce that $\left.Q\right|_{V_j \times V_j }$ is positive definite.  By relation \eqref{Yields}, $\left.Q\right|_{V_j \times V_j }=\left.\mu_{j}B\right|_{V_j \times V_j }$.  Therefore, $\mu_{j}>0$, which concludes the proof of Theorem \ref{main}.

 \subsection{Proof of Corollary \ref{Class}}
 By taking into account Theorem \ref{main}, Definition \ref{normal} and Proposition \ref{DZp}, we deduce that $g$ is a g.o. metric if and only if $g(\ , \ )_o$ is the restriction $\left.Q\right|_{\widetilde{\fr{m}}\times \widetilde{\fr{m}}}$ of an inner product of the form $Q=\sum_{j=1}^k\left.\mu_j B\right|_{\fr{g}_j\times \fr{g}_j}$, where $B$ is the negative of the Killing form of $\fr{g}$ and $\widetilde{\fr{m}}$ is the $Q$-orthogonal complement of $\fr{s}$ in $\fr{g}$.  It remains to show that if $Q^{\prime}=\sum_{j=1}^k\mu^{\prime}_j \left.B\right|_{\fr{g}_j\times \fr{g}_j}$ is an $\operatorname{Ad}$-invariant inner product on $\fr{g}$ which is different from $Q$ up to homothety and $\widetilde{\fr{m}}^{\prime}$ is the $Q^{\prime}$-orthogonal complement of $\fr{s}$ in $\fr{g}$, then $\left.Q\right|_{\widetilde{\fr{m}}\times \widetilde{\fr{m}}}$ and $\left.Q^{\prime}\right|_{\widetilde{\fr{m}}^{\prime}\times \widetilde{\fr{m}}^{\prime}}$ define different metrics on $G/S$ up to homothety.  Since the products $Q$ and $Q^{\prime}$ are different up to homothety, then $G$ is non-simple and we may assume (after normalizing the metrics) that $\mu_1=1$ and that there exists an index $j_0=2,\dots,k$ such that $\mu_{j_0}\neq \mu_{j_0}^{\prime}$. Choose the non-zero spaces $V_1,V_{j_0}$, obtained in Claim \ref{claim2}.  Then $V_1\subset \fr{g}_1$, $V_{j_0}\subset \fr{g}_{j_0}$, and $V_1,V_{j_0}\subset \widetilde{\fr{m}},\widetilde{\fr{m}}^{\prime}$.  Therefore, $\left.Q\right|_{V_1\times V_1}=\left.B\right|_{V_1\times V_1}=\left.Q^{\prime}\right|_{V_1\times V_1}$, while $\left.Q\right|_{V_{j_0}\times V_{j_0}}=\left.\mu_{j_0}B\right|_{V_{j_0}\times V_{j_0}}\neq \mu_{j_0}^{\prime}\left.B\right|_{V_{j_0}\times V_{j_0}}=\left.Q^{\prime}\right|_{V_{j_0}\times V_{j_0}}$.  Hence, the metrics $\left.Q\right|_{\widetilde{\fr{m}}\times \widetilde{\fr{m}}}$, $\left.Q^{\prime}\right|_{\widetilde{\fr{m}}^{\prime}\times \widetilde{\fr{m}}^{\prime}}$ are different up to homothety.

\end{document}